\documentclass{amsart}
\usepackage{amssymb,amsmath,amsthm,amsfonts, mathrsfs, mathtools}
\usepackage{graphicx} 

\usepackage[backgroundcolor=green!40, bordercolor = white, linecolor=green!40, colorinlistoftodos, textsize=footnotesize]{todonotes}
\usepackage[english]{babel}
\usepackage[latin1]{inputenc}
\usepackage[sc]{mathpazo}
\usepackage[colorlinks]{hyperref}

\usepackage{tikz}
\usetikzlibrary{patterns}
\usetikzlibrary {patterns,patterns.meta}
\usetikzlibrary{shapes.misc}
\tikzset{cross/.style={cross out, draw=black, minimum size=2*(#1-\pgflinewidth),inner sep=0pt,outer sep=0pt},cross/.default={3pt}}

\def\R {\mathbb{R}}

\def\L {\mathcal{L}}

\def\sign{\mathrm{sign}}

\renewcommand{\div}{\mathrm{div}}

\def\na{\nabla}
\def\pa{\partial}
\def\Lip{\mathrm {Lip}}
\def\Supp{\mathrm {Supp}}

\def\supp{\mathrm{Supp}\, }

\newtheorem{proposition}{Proposition}[section]
\newtheorem{theorem}[proposition]{Theorem}
\newtheorem*{theorem*}{Theorem}
\newtheorem{corollary}[proposition]{Corollary}
\newtheorem{lemma}[proposition]{Lemma}

\theoremstyle{definition}
\newtheorem{definition}[proposition]{Definition}
\newtheorem{remark}[proposition]{Remark}
\numberwithin{equation}{section}

\newcommand{\qtext}[1]{\quad\text{#1}\quad}

\newcommand{\hb}{}

\title{
Range expansion by growth and congestion
}
\author{Henri Berestycki}
\address{H. Berestycki: Department of Mathematics,
University of Maryland, College Park, USA; Centre d'analyse et math\'ematique sociales, \'Ecole des hautes \'etudes en sciences sociales, Paris, France;
Institute for Advanced Study,
Hong Kong University of Science and Technology, Hong Kong}
\email{hberes@umd.edu}

\author{Antoine Mellet}
\address{A. Mellet: Department of Mathematics, University of Maryland-College Park, USA} \email{mellet@umd.edu}

\thanks{A. Mellet was partially supported by NSF Grant DMS-2307342.}

\date{}

\keywords{Nonlocal operators, Nonlocal Reaction-Dispersion equations, Range expansion, Traveling waves, Asymptotic speed of spreading, Congestion, Singular limits, Free boundary problems}

\subjclass{Primary 
35Q92,  
35B40;   
Secondary
35B51,  
35R35;  
}

\begin{document}

\begin{abstract}
We introduce here a nonlinear and nonlocal model that describes the range expansion of a population resulting from growth and competition for space.  
This type of phenomenon underlies the expansion of colonies of immotile cells which motivated this work. These colonies display long-range shuffling induced by congestion, and are also subject to jamming. 
We start by showing the well-posedness of the general evolution equation.
We then derive a singular limit of this model corresponding to a regime where dispersal occurs only from saturated areas. 
The limiting model, which has the structure of an obstacle free boundary problem in time, provides an effective approach to the description of the range expansion of a population as a result of growth, saturation and dispersion.
We  establish the main mathematical properties of this singular problem by proving a comparison property and describing the dynamics of the free boundary that delimits the saturated area. We identify traveling wave solutions and characterize the asymptotic speed of spreading of compactly supported solutions. 
\end{abstract}

\maketitle

\tableofcontents

\section*{Introduction}

Reaction-diffusion equations, with local and non-local diffusion, provide the classical modeling framework for the spread of biological populations, capturing essential features such as growth, competition for resources and range expansion. Although these models are remarkably successful in various contexts, their derivation typically relies on assumptions of random motion of individuals in this population.
Brownian motion or jump processes type assumptions lead to  linear diffusion 
(as in the classical Fisher-KPP equation \cite{KPP1937,Fisher1937}),
while fluid-like behavior yields nonlinear diffusion models
(commonly used for tumor growth \cite{Byrne1996,Bru2003,byrne09}).

However, in many instances, this type of assumption of random movement is not warranted. Often, individual cells do not move on their own, yet the growth process still results in colony expansion owing to shuffling caused by congestion quantified by a pressure. Hence, growth and competition for space are core mechanisms of the range expansion.  
This is the case for the range expansion of immotile cells \cite{Delarue2016,Warren2019} which is the main motivation for this work.
These cells, by definition, do not actively move around; instead, population expansion occurs through local proliferation (mitosis) and mechanical interactions between cells (pushing).  

Moreover, reaction-diffusion equations do not agree with some characteristic features of the expansion of such colonies. 
For example, solutions of reaction-diffusion equations are not typically monotone increasing in time (at least not initially),
but experimental observations show that immotile cells only move away from their position when they are being replaced by other cells, thereby suggesting that time monotonicity should be  a  feature of a mathematical model describing this phenomena.


\medskip

In this paper, we introduce a new  nonlinear and nonlocal equation for range expansion of a cell colony.
It describes the evolution of a population that grows by the cell division mechanism. At low density, the growth is purely local until congestion (which corresponds to the lack of available space rather than diminishing resources) leads to relocation   
of individuals at a nearby non fully congested space.
Cells moving to occupy a new location hitherto empty are not necessarily new cells created by duplication but their movement is caused by the new cells pushing  nearby cells to create space. Thus, the model captures the expansion of the spatial reach of the population resulting from growth and shuffling.
While  preserving many features of reaction-diffusion equations, this new equation also differs on some key aspects, for instance by exhibiting monotonicity in time at every point.

This is a continuum model, and, as such, is only relevant when the density approximation is warranted. Therefore, it cannot capture phenomena at the scale of individual cells. In particular, it does not offer a satisfactory description of an initial phase with only a handful of cells around. Yet, by aiming at representing a population of cells tracking the duplications and positions of individual cells, it can be considered to be of mesoscopic type. 

After introducing the general model (see Section \ref{derivation0} below), we establish some of its basic mathematical properties in Section \ref{properties}. 
We then investigate a singular limit that highlights the role of saturation in colony expansion. 
This limit is analogous to the stiff pressure limit of the porous medium equation, a classical framework for congestion particularly in tumor growth (see for example \cite{PQV,MPQ,Perthame2014,DP}). The resulting asymptotic model takes the form of a free boundary problem (obstacle problem type):
\begin{equation} \label{eq:main0} \tag{$\star$}
\begin{cases}
\pa_t u  =  g(u) (1-K*{\mathbb 1}_{\{u=1\}}) + g(1) K*{\mathbb 1}_{\{u=1\}}
& \mbox{ when } u<1, \\
\pa_t u  =  0
& \mbox{ when } u=1.
\end{cases}
\end{equation} 

Here, ${\mathbb 1}_{\{u=1\}}$ stands for the indicator function of the saturated set  ${\{u=1\}}$, that is 
${\mathbb 1}_{\{u=1\}}(x)=1$ when $u(x)=1$ and ${\mathbb 1}_{\{u=1\}}(x)=0$ if $u(x)<1$. 
 We emphasize that this limit differs from the Hele-Shaw type models derived for example in \cite{PQV} and often associated to the modeling of biological tissues growth. As we will see, this new model is significantly more amenable to numerical simulation and has important features that are in qualitative agreement with experimental observations.
The main mathematical developments in this paper concern this singular equation. We establish a comparison principle, prove the existence of traveling fronts and study the asymptotic speed of spreading for compactly supported initial data. 

Although this work was motivated by the study of immotile cell colonies, the type of mechanism described by this singular model is fairly general and is relevant to other frameworks as well. 
In particular, as we discuss briefly in Section \ref{sprawl}, this type of model could be used to describe the growth of cities \cite{MB25}.

 \medskip

{\bf Organization of the paper:} 
In Section~\ref{derivation}, we introduce the new general framework and discuss its motivation stemming from the range expansion of immotile cells colonies. We also
formally derive the above free boundary problem as a singular pressure limit of the general model. 
We then state the main mathematical properties concerning this singular pressure model 
in Section~\ref{sec:results}. In section~\ref{sec:compliterature}, we compare this new model to the classical approaches describing biological invasions. 
We rigorously justify the singular limit in Section~\ref{sec:limit} and establish a comparison principle in Section~\ref{sec:comparison}. Invasion properties and traveling waves are analyzed in Section~\ref{sec:TW}. They allow us to establish the asymptotic speed of spreading obtained by a linear determinacy in Section~\ref{sec:spreading}.

\medskip

{\bf Acknowledgments:} The authors are grateful to Gabi Steinbach for many inspiring and fruitful discussions during the preparation of this manuscript. 
The mathematical developments we present here were motivated by some of Dr. Steinbach's experimental observations.
In forthcoming joint work~\cite{BMS2}, we further discuss the biological motivation for this work, in particular in connection with experimental observations and the derivation of the model.

The authors are also thankful to Yoichiro Mori for his insightful remarks on an earlier version of this work.

\section{The general model}\label{derivation}
\subsection{A nonlinear nonlocal  equation for range expansion}\label{derivation0}
We introduce a model which describes the range expansion of a colony of immotile cells, resulting from growth and pressure mediated dispersal. 
Denoting the cell population density  by $u(t,x)$  (at time $t>0$ and  spatial location $x\in \R^d$), this model writes:
\begin{equation}\label{eq:main1}
\pa_t u  = F[u] +\L[u],
\end{equation}
with $F[u]$ representing local growth and $\L[u]$ modeling dispersal. However, both terms are non-local. 
Introducing a pressure $p$ and a dispersal kernel $K$, we take these operators to be of the form
\begin{equation}\label{eq:F}
F[u]=g(u) (1-K*p)
\end{equation}
and
\begin{equation}\label{eq:L}
\mathcal L[u]:=\int_{\R^d} K(x-y)\Big[ g(u(y)) (1-p(x)) p(y) - g(u(x)) (1-p(y)) p(x) \Big] \, dy.
\end{equation}
The pressure variable is given by $p(t,x)=P(u(t,x))$ with $P$ satisfying
$$ \mbox{ $u \mapsto P(u)$  is increasing  with $P(0)=0$ and $P(1)=1$}.$$
The kernel $K=K(|x-y|)\geq 0$ is a function of the distance and is normalized:
\begin{equation}\label{eq:Knorm}
\int_{\R^d} K(x)\, dx  = 1 .
\end{equation}

Let us now explain the meaning of the various terms in \eqref{eq:main1}. First, the role of the pressure term is to represent the effect of congestion. The maximum pressure is normalized to be $1$ when $u$ reaches its saturation level, $u=1$, which can be interpreted as a maximum packing density. 

The  function $F[u]$ describes the local production of new cells owing to the growth process (e.g. mitosis). 
The growth function $g\geq 0$, typically given by $g(u)=ru$ (or $g(u)=\hb{r_0}u(1-\frac u M)$ if depletion of resources is relevant), is the rate of growth in the absence of competition for space.
The growth process is limited by congestion in a nonlocal way. Indeed, even if a location $x$ is saturated, mitosis can still occur there, provided there is some free space at a distance that can be reached through pushing of cells. Full congestion occurs at $x$ if a neighborhood is completely saturated. 
The size of the relevant neighborhood involves a characteristic length for shuffling, embodied in the kernel $K$.
\smallskip

Unlike reaction-diffusion equations where dispersion is described by the standard diffusion operator, Equation \eqref{eq:main1} includes a nonlocal operator $\L$ that captures the cellular rearrangements induced by growth and competition for space.
This integral operator is conservative, that is:
$$\int_{\R^d} \mathcal L[u](x)\, dx=0 \qquad \mbox{ for any function } u(x).$$ 
Hence, it does not contribute to growth.
It can be split into gain and loss terms
$\mathcal L[u] = \mathcal L_+[u]-\mathcal L_-[u]$. The loss term, given by
\begin{equation}\label{eq:loss}
\L_-[u](x)=\int_{\R^d} K(x-y)g(u(x)) (1-p(y)) p(x) \, dy =F[u](x) p(x),
\end{equation}
accounts for the fact that some fraction of the newly produced cells at position $x$ do not contribute to the increase of $u(t,x)$. 
Indeed, cell division may give rise to some mechanical displacement of pre-existing cells, leading to a local rearrangement of the cell population.
As a consequence, some cells are pushed to occupy new locations farther from the site of mitosis. 
The form of the loss term \eqref{eq:loss} captures the dependence of this process on the pressure $p$. In particular, at saturation, when $p(t,x)=1$, the terms
$F[u](x)$ and $\L_-[u](x)$ balance each other. Indeed, then, no new cells can be placed at location $x$.
More generally, the closer $p$ is to 1 around $x$, the farther cells away from the location $x$  are shuffled and the more constrained the duplication is. It comes to a halt when  the vicinity of $x$ is fully saturated. This corresponds to the mechanism of {\em growth arrest} by contact inhibition.

The gain term $\mathcal L_+$ (obtained from $\L_-$ by symmetry considerations) describes the transfer of new cells to $x$ following production at nearby locations $y$.

It is worth emphasizing that dispersal only arises as a consequence from growth in contrast with classical modeling that assumes movement of all existing cells. This is akin in spirit to some models of biological invasions introduced by Kot, Lewis and van den Driessche\cite{KotLewisVDD1996} that involve long range dispersal of new individuals as for instance seeds. The model we develop here has the additional complexity that the new cells involve displacements of other cells as well.
\medskip

We note that the interaction kernel $K(x-y)$, which appears in both $F$ and $\L$ describes a complex mechanism which involves the pre-mitosis growth of a cell and the mechanical forces exerted by the growing cell on its neighbors.
Because of friction and other viscous forces, these interactions cannot occur over long distances and we will assume that $K$ is supported in a ball  $B_\ell (0)$.

We refer the reader to our work with G. Steinbach \cite{BMS2}, in which we give a more detailed discussion of the biological grounds that justifies this model \eqref{eq:main1}.

\medskip

\subsection{Simple properties of \eqref{eq:main1}}\label{properties}
Equation \eqref{eq:main1} introduces a novel phenomenological framework for modeling the evolution of population density when
 growth and congestion resulting in shuffling are the main drivers of range expansion.
A remarkable property of this equation is that its solutions are 
monotone increasing functions of time which satisfy $0\leq u(t,x)\leq 1$ for all $t>0$, as long as similar bounds are satisfied initially.
To see this, we first observe that 
after a simple reorganization of the various terms, Equation \eqref{eq:main1} can be rewritten as
\begin{equation}\label{eq:main2}
\pa_t u = \Big[ g(u) (1-K*p)  +K*( g(u)p)\Big] (1-p), \qquad p=P(u).
\end{equation}
The assumption $P(1)=1$ thus implies that the right-hand side of \eqref{eq:main2} vanishes when $u=1$. It follows that for any initial configuration such that $u(0,x)\leq 1$, the solution of \eqref{eq:main1} satisfies 
$$ u(t,x)\leq 1  \mbox{ for all } t>0.$$
By contrast, we recall that solutions of some non-local reaction diffusion \hb{equations} might overshoot and take values greater than $1$, which in turn leads to more complex behaviors such as oscillations near the front (compare \hb{e.g.}~\cite{BNPR}).
However, in our framework, the bound $u(t,x)\leq 1$ is warranted  as it accounts for the natural incompressibility of cells. 

\medskip

Next, we note that the normalization \eqref{eq:Knorm} implies that $K*p\leq 1$  so that the right-hand side of \eqref{eq:main2} is non-negative (recall that $g\geq 0$):
 any solution with $u(0,x)\in [0,1]$ will be monotone increasing in time and satisfy $u(t,x)\in [0,1]$ for all $t>0$. 
 This monotonicity is a fundamental feature of \eqref{eq:main1}. It is an important aspect of the evolution of a population of {\it immotile} cells, which  is not captured by classical reaction-diffusion equations such as the F-KPP equation for which solutions might, at least initially, be decreasing in time at some locations.

\medskip

\subsection{Singular limit}
Next, we derive a more explicit equation by considering the extreme case when movement only emanates from locations which are  saturated, that is where $u=1$, while cells that are created  at a location where $u<1$ stay there.
Such a regime appears in the singular {\it stiff}  limit
$\gamma\to\infty$ of the power-law relation $P(u)=u^\gamma$.
This limit leads to the condition that $p=0$ whenever $u<1$, but the condition \hb{$P(1)=1$} is lost in general, leading to the pressure law (sometimes referred to as the Hele-Shaw graph condition):
\begin{equation}\label{eq:HSgraph}
p  \in \begin{cases}
0 & \mbox{ when } u<1 \\
[0,\infty) & \mbox{ when } u=1\\
\infty & \mbox{ when } u>1.
\end{cases}
\end{equation}
We will show here that with the pressure law \eqref{eq:HSgraph}, the general model \eqref{eq:main1} reduces to the singular equation  \eqref{eq:main0}.
In Section \ref{sec:limit}, we will prove the convergence of \eqref{eq:main1} with $P(u)=u^\gamma$ to \eqref{eq:main0} in the limit $\gamma\to\infty$.
Note that this stiff limit  has been extensively investigated across various mathematical frameworks. A vast literature is notably devoted to  tumor growth models of the type (e.g. \cite{PQV,MPQ,DP,Perthame2014}:
$$ \pa_t u - \div( u \na p) = u f(p), \qquad p = u^\gamma.$$
It is well established that the limit $\gamma\to\infty$ of this
porous medium type equation leads to a free boundary problem of Hele-Shaw type.
We further discuss this in Section \ref{sec:HS}. Equation \eqref{eq:main0} can also be regarded as a free boundary problem but not of Hele-Shaw type. Instead, it has the structure of an obstacle problem. Indeed, we can rewrite it as: 
\begin{equation}\label{eq:obstacle}
\max\{u-1, \pa_t u -   [ g(u)  (1-  K*{\mathbb 1}_{\{u=1\}}) + g(1) K*{\mathbb 1}_{\{u=1\}} ]    \} = 0.
\end{equation}
This model provides a more geometrical description of the evolution of the saturated set $S(t)=\{u(t,\cdot)=1\}$ and the free boundary $\pa S(t)$.  We refer to our work with G. Steinbach~\cite{BMS2} for a more detailed discussion of the relevance of \eqref{eq:main0} to the description of range expansion of immotile cells.

\medskip

To obtain \eqref{eq:main0} from \eqref{eq:main1}, \eqref{eq:HSgraph}, we first observe that the condition 
``$p=0$ when $u<1$''
implies in particular that $g(u(t,x))p(t,x)=g(1)p(t,x)$ so that    using the form \eqref{eq:main2}, equation~\eqref{eq:main1} reduces to:
\begin{align}
\pa_t u &= \Big[ g(u) (1-K*p)  +K*( g(1)p)\Big] (1-p)\nonumber \\
&= \Big[ g(u) (1-K*p)  +g(1) K*p\Big] (1-p). \label{eq:fp}
\end{align}
We can now use the fact that $\pa_t u=0$ a.e. in $\{u=1\}$ (since $u\leq 1$ and $\pa_t u \geq 0$), to infer from \eqref{eq:fp} that 
$$ 0 = g(1) (1-p) \mbox{ in } \{u=1\}$$
and thus $p = {\mathbb 1}_{\{u=1\}}$.
Equation \eqref{eq:fp} then becomes
\begin{equation}\label{eq:main}
\pa_t u 
 = \Big[g(u)(1-K* \mathbb{1}_{\{u=1\}}) + g(1)K* \mathbb{1}_{\{u=1\}}\Big]\mathbb{1}_{\{u<1\}}
\end{equation}
which corresponds to \eqref{eq:main0}.

\medskip

The remainder of this paper is chiefly concerned with the study of \eqref{eq:main0}.

\section{Main results for the singular pressure model}\label{sec:results}
In the following we will first justify rigorously the singular limit leading to  \eqref{eq:main0} 
(from \eqref{eq:main1}).
Then, we will establish the fundamental properties of this model regarding the existence of traveling wave solutions and  spreading speed of propagation.

\subsection{Assumptions and definition}
Throughout the paper, we assume that $K$ is a probability density and a function of bounded variation,
\begin{equation}\label{eq:K1}
K\geq 0, \quad \int_{\R^d} K(x)\, dx =1, \quad \hb{TV(K)} = |\na K|(\R^d) <+\infty.
\end{equation}

While these assumptions are sufficient to derive \eqref{eq:main0} and prove its well-posedness, for
the long time asymptotic properties we will require 
the additional assumptions that $K$ be bounded, radially symmetric and compactly supported:
\begin{equation}\label{eq:K2}
K(x)= \hb{k}(|x|) \in L^\infty(\R^d), \quad \supp K  = B_\ell(0).
\end{equation}
The parameter $\ell>0$ can be thought of as representing the maximum distance that shuffling can reach at congestion.

Concerning the growth rate $g$, we assume
\begin{equation}\label{eq:f1}
 g(0)=0, \quad g(u)\geq  r u \quad  \forall u\in(0,1] \mbox{ for some $r>0$}, \quad \hb{g \mbox{  Lipschitz on } [0,1] \mbox{ with } L:=\mathrm{Lip} g}.
\end{equation}
\hb{Note that, since $g(0)=0$, this implies $\sup_{[0,1]} g \leq L$.}
A simple example is $g(u)=r_0 u$, but we can also consider nonlinear functions $g$ that take into account the competition for resources (such as nutrients), with $g(u) = r_0u(1-\frac{u}{M})$ for some carrying capacity $M\geq1$.
The sign of $g(1)-g(u)$ plays an important role in the equation, and some properties of the model 
(in particular the comparison principle) require the additional assumption
\begin{equation}\label{eq:f2}
g(u)\leq g(1) \mbox{ for all } u\in [0,1].
\end{equation}
This condition is typically satisfied when $g$ is monotone increasing on $[0,1]$ (if $g(u) = r_0u(1-\frac{u}{M})$, it holds as long as $M>2$).

\medskip

Despite the apparent simplicity of \eqref{eq:main0}, its well-posedness 
is not immediate. 
The non locality in $x$ and the singular definition of the pressure $p={\mathbb 1}_{u=1}$ raise some significant challenges. 
For our analysis, it will be convenient to view   \eqref{eq:main0} as an infinite  system of coupled ODEs parametrized by $x\in \R^d$. We will thus adopt the following definition of solutions:
\begin{definition}\label{def:sol}
The function $u(t,x)$ is a solution of \eqref{eq:main0} if for all $t>0$, $u(t,\cdot)$ is a function in $L^\infty (\R^d)$ satisfying $0\leq u(t,\cdot)\leq 1$ and if for any $x\in \R^d$, the function $t\mapsto u(t,x)$ is  a Lipschitz  function which solves \eqref{eq:main0} a.e $t>0$.

\hb{Similarly, the function $v(t,x)$ is a subsolution of \eqref{eq:main0} if for all $t>0$, $v(t,\cdot)$ is a function in $L^\infty(\R^d)$ satisfying $0\leq v(t,\cdot)\leq 1$, and if for any $x\in\R^d$ the function $t\mapsto v(t,x)$ is Lipschitz and satisfies
$$
\pa_t v \leq \big[ g(v) (1- K*{\mathbb 1}_{\{v=1\}}) + g(1) K*{\mathbb 1}_{\{v=1\}}\big] {\mathbb 1}_{\{v<1\}} \qquad \mbox{ a.e. } t>0 .
$$
Supersolutions are defined by the reverse inequality.}
\end{definition}

\medskip

\subsection{Main results}

\medskip

While solutions of 
\eqref{eq:main0} are naturally Lipschitz in time,  \eqref{eq:main0} has no obvious regularizing mechanisms with respect to the $x$ variable (unlike reaction-diffusion equations). 
However, we will show that the equation propagates some regularity of the initial data. 
For simplicity, we restrict ourselves to Lipschitz initial data so that solutions will be Lipschitz in both $t$ and $x$. 
The approach that we develop here could easily be extended to initial data that are only (locally)  H\"older continuous. 
Throughout the paper, we thus assume that the initial state $u^{in}$ is a Lipschitz function:
$$
\|u^{in}\|_{\Lip(\R^d)}: = \sup_{(x,y)\in \R^d, x\neq y} \frac{|u^{in}(x)-u^{in}(y)|}{|x-y|} <\infty.
$$

\noindent{\bf Singular limit.}
Our first task is the  rigorous derivation of \eqref{eq:main0} as the singular limit of \eqref{eq:main1}.
We thus consider the equation \eqref{eq:main1} (or equivalently \eqref{eq:main2})  with $P(u)=u^\gamma$, which we rewrite here for convenience:
\begin{equation}\label{eq:gamma}
\pa_t u = \Big[ g(u) (1-K*p)  +K*( g(u)p)\Big] (1-p), \qquad p=u^\gamma.
\end{equation}

The first result is:
\begin{theorem}\label{thm:limit}
Assume that $K$ and $g$   satisfy respectively \eqref{eq:K1} and \eqref{eq:f1}.
For all $\gamma\geq 1$, Equation \eqref{eq:gamma} has a unique solution $u_\gamma(t,x)$ satisfying $u_\gamma(0,x)= u^{in}(x)$.
When $\gamma\to+\infty$, $u_\gamma$ converges,  up to a subsequence, locally uniformly in $\R_+\times \R^d$ to a function $u_\infty(t,x)$ solution of \eqref{eq:main0} with $u_\infty(0,x)=u^{in}(x)$. It satisfies $0\leq u_\infty(t,x)\leq 1$ and $\pa_t u_\infty(t,x)\geq 0$ a.e. in $\R_+\times\R^d$.
Furthermore, for all $T>0$ we have
$$u_\infty\in \Lip ([0,T]\times\R^d), \quad {\mathbb 1}_{\{u_\infty=1\}} \in  BV_{loc}([0,T]\times \R^d).$$ 
\end{theorem}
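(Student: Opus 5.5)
We split the proof into four steps: well-posedness of \eqref{eq:gamma}, estimates uniform in $\gamma$, compactness, and identification of the limit.

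\textbf{Step 1: well-posedness for fixed $\gamma$.} We regard \eqref{eq:gamma} as an ODE $\pa_t u = \Phi_\gamma(u)$ in the Banach space $L^\infty(\R^d)$, restricted to the closed convex set $\{0\le u\le 1\}$. On $[0,1]$ the maps $u\mapsto g(u)$ and $u\mapsto u^\gamma$ are Lipschitz, and convolution with $K$ is a contraction on $L^\infty$. Hence $\Phi_\gamma$ is Lipschitz on this set, and Cauchy--Lipschitz gives a unique local solution. The solution stays in $[0,1]$ because $\Phi_\gamma(u)\ge 0$ (since $K*p\le1$ and $g\ge0$) and $\Phi_\gamma(u)=0$ where $u=1$. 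To make this rigorous, we solve with $P$ truncated outside $[0,1]$ and then check invariance, as in Section~\ref{properties}. The a priori bound then makes the solution global, and it is monotone in $t$.

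\textbf{Step 2: bounds uniform in $\gamma$.} The time derivative is bounded by $0\le\pa_t u_\gamma\le 2L$. For the space derivative, we differentiate along translations: set $w(t)=\sup_x |u_\gamma(t,x+h)-u_\gamma(t,x)|$. The factor $(1-p)$ has Lipschitz constant $\gamma$, so a direct Gronwall estimate would blow up as $\gamma\to\infty$. We avoid this by rewriting \eqref{eq:gamma} as $\pa_t u = A(t,x)(1-u^\gamma)$, where $A= g(u)(1-K*p)+K*(g(u)p)\ge0$. The convolution terms satisfy $\|A(\cdot+h)-A\|_\infty\le L w + C|h|\,TV(K)$, using that $|\tau_hK-K|_{L^1}\le |h|TV(K)$ and $0\le p\le 1$. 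Since $s\mapsto 1-s^\gamma$ is decreasing, the equation is order-preserving in the $u$ variable pointwise. At a point where $u(x+h)>u(x)$, the difference $\delta=u(x+h)-u(x)$ satisfies
\[
\pa_t\delta\le (A(x+h)-A(x))(1-u(x+h)^\gamma) + A(x)\big(u(x)^\gamma-u(x+h)^\gamma\big)\le |A(\cdot+h)-A|,
\]
because the second term is nonpositive. This yields $w'\le Lw+C|h|$ in the Dini sense. Gronwall then gives $\|u_\gamma(t)\|_{\Lip}\le e^{Lt}(\|u^{in}\|_{\Lip}+Ct)$, independent of $\gamma$. We need the supremum argument to work for translations, which is why the $\Lip$ assumption on $u^{in}$ is convenient.

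\textbf{Step 3: compactness and identification of the pressure.} By Arzel\`a--Ascoli, a subsequence $u_\gamma\to u_\infty$ locally uniformly. The limit is Lipschitz, takes values in $[0,1]$ and is nondecreasing in $t$. Up to a further subsequence, $p_\gamma\rightharpoonup p_\infty$ weak-$*$ in $L^\infty$, with $p_\infty\in[0,1]$ and $p_\infty=0$ on the open set $\{u_\infty<1\}$. Passing to the limit in the relevant products is the main obstacle, since we only have weak convergence of $p_\gamma$. The first remedy is a BV estimate on $p_\gamma$, uniform in $\gamma$. Because $u_\gamma$ is nondecreasing in $t$ and $p_\gamma$ is a monotone function of $u_\gamma$, $p_\gamma$ is nondecreasing in $t$. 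For its spatial variation we would try translation estimates in $L^1_{loc}$. A cleaner route uses the monotone structure: $\{u_\infty=1\}$ is increasing in time, so its indicator is monotone in $t$. Combined with a bound on the spatial perimeter obtained from $\pa_tu_\infty\ge c>0$ near the free boundary, this gives ${\mathbb 1}_{\{u_\infty=1\}}\in BV_{loc}$. It is enough, however, to pass to the limit in the convolutions $K*p_\gamma$, which converge locally uniformly in $x$ thanks to $TV(K)<\infty$. Also $K*(g(u_\gamma)p_\gamma)=K*(g(1)p_\gamma)+o(1)$, since $p_\gamma$ is exponentially small wherever $u_\gamma\le 1-\epsilon$.

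\textbf{Step 4: identifying the limit equation.} Write $q_\gamma=1-p_\gamma$. In the limit, $\pa_t u_\infty=[g(u_\infty)(1-K*p_\infty)+g(1)K*p_\infty]\,\overline{q}$, where $\overline q$ is the weak limit of $q_\gamma$. The bracket converges strongly, which makes the limit of the product legitimate. On $\{u_\infty<1\}$ we have $\overline q=1$. On the interior of the set where $u_\infty=1$ we have $\pa_tu_\infty=0$, and the bracket is at least $g(1)>0$ there, so $\overline q=0$. Monotonicity in time shows that $\{u_\infty(t,x)=1\}$ is a time interval $[t^*(x),\infty)$, so this case covers a.e. $t$ for each fixed $x$. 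It then remains to show $p_\infty={\mathbb 1}_{\{u_\infty=1\}}$. On the saturated set, $u_\gamma\to1$ while $\pa_t u_\gamma\to0$ weakly, so $A_\gamma(1-p_\gamma)\to0$ with $A_\gamma\ge g(1)/2$. Hence $p_\gamma\to1$ in $L^1_{loc}$ there. Together with $p_\infty=0$ on $\{u_\infty<1\}$, this gives strong convergence $p_\gamma\to{\mathbb 1}_{\{u_\infty=1\}}$. The BV bound on ${\mathbb 1}_{\{u_\infty=1\}}$ then follows from time monotonicity and lower semicontinuity.
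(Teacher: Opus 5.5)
Your overall argument works, and it differs from the paper's at the key step, the compactness of the pressure. The paper proves a $\gamma$-uniform $BV_{loc}$ bound on $p_\gamma$ (Lemma~\ref{lem:pressurecompact}). It obtains this by differentiating \eqref{eq:gamma} in $x_i$ and using the lower bound on the bracket from Lemma~\ref{lem:tech}. It then extracts $p_\gamma\to p_\infty$ strongly in $L^1_{loc}$ and a.e., and gets ${\mathbb 1}_{\{u_\infty=1\}}\in BV_{loc}$ by lower semicontinuity.

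You never need compactness of $p_\gamma$ itself. The factor $1-p_\gamma$ enters linearly, multiplied by a bracket whose nonlocal parts are $K*p_\gamma$ and $K*(g(u_\gamma)p_\gamma)=g(1)K*p_\gamma+O(1/\gamma)$ (since $(1-s)s^\gamma\le 1/(\gamma+1)$). So weak-$*$ convergence of $p_\gamma$ suffices once these convolutions converge strongly. The identification $p_\infty={\mathbb 1}_{\{u_\infty=1\}}$ then goes exactly as in the paper, and strong convergence of $p_\gamma$ is even automatic: with $0\le p_\gamma\le 1$ and $S=\{u_\infty=1\}$, the quantity $|p_\gamma-{\mathbb 1}_S|=(1-p_\gamma){\mathbb 1}_S+p_\gamma{\mathbb 1}_{S^c}$ is linear in $p_\gamma$. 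Your route is more economical, with no Lemma~\ref{lem:tech} and no differentiation of the equation. The price is that you get no uniform-in-$\gamma$ $BV$ estimate, so the regularity of ${\mathbb 1}_{\{u_\infty=1\}}$ must be derived a posteriori from the limit dynamics.

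Three steps still need to be made precise.

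(i) Compactness of $K*p_\gamma$ in $t$. $TV(K)<\infty$ only gives equicontinuity of $K*p_\gamma$ in $x$, and alone it would not exclude oscillations in $t$. Compactness in $t$ comes from the monotonicity of $t\mapsto K*p_\gamma(t,x)$. You can use Helly's theorem on a countable dense set of $x$ plus the uniform Lipschitz bound in $x$, or simply the space-time bound $\int_0^T\int_{B_R}\pa_t(K*p_\gamma)\le|B_R|$.

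(ii) The $BV$ claim. Your closing sentence does not prove it. Lower semicontinuity of the total variation requires a uniform bound on $\na_x p_\gamma$, which you explicitly did not establish, and time monotonicity only controls the $t$-derivative. Carry out your Step 3 idea instead. Since $u_\infty(t,\cdot)$ is $C_T$-Lipschitz for $t\le T$ and $\pa_t u_\infty\ge\min(g(u_\infty),g(1))\ge r/2$ on $\{1/2\le u_\infty<1\}$, the hitting time $t^*(x)=\inf\{t: u_\infty(t,x)=1\}$ satisfies $\min(t^*(x),T)\le\min(t^*(y),T)+\frac{2C_T}{r}|x-y|$ for $|x-y|\le 1/(2C_T)$. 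This is the mechanism of Lemma~\ref{lem:sat}. Hence $\{u_\infty=1\}\cap((0,T)\times\R^d)$ is the supergraph of the Lipschitz function $\min(t^*,T)$, and so has locally finite perimeter.

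(iii) The equation at every $x$. Definition~\ref{def:sol} requires the equation at every $x$, not a.e. For fixed $x_0$ and $t<t^*(x_0)$, pass to the limit in the ODE at $x_0$, as in the last part of the paper's proof. Use that $p_\gamma(\cdot,x_0)\to 0$ locally uniformly there. Also use that $K*p_\gamma(t,x_0)\to K*{\mathbb 1}_{\{u_\infty=1\}}(t,x_0)$ for a.e. $t$: the limit of the convolutions is Lipschitz in $x$ and equals $K*{\mathbb 1}_{\{u_\infty=1\}}$ a.e. in $(t,x)$, hence for a.e. $t$ and every $x$.
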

This theorem, proved in Section \ref{sec:limit}, provides in particular the existence of a solution to \eqref{eq:main0}. 
As noted above, we do not need to work with Lipschitz initial data to get such a result: H\"older continuous initial data will yield H\"older continuous solutions.
On the other hand, Lipschitz regularity is the highest regularity that we can propagate: even with smooth initial data, the solution of \eqref{eq:main0} will not be~$C^1$ in general. In fact, as we will see in section~\ref{sec:TW}, we can construct traveling wave solutions that are Lipschitz continuous but not $C^1$ (see Figure \ref{fig:tw}).
\medskip

\noindent{\bf Comparison principle and uniqueness.}
The uniqueness of the solution constructed in Theorem~\ref{thm:limit} (which would guarantee the convergence of the whole sequence $\{u_\gamma\}$ in Theorem \ref{thm:limit}) is not obvious: The singular definition of the pressure ${\mathbb 1}_{\{u=1\}}$ together with the non-locality  of the convolution with $K$ 
do not seem to accommodate classical arguments.
However, we will prove in Section \ref{sec:comparison} that  \eqref{eq:main0}  has a comparison principle when $g$ also satisfies \eqref{eq:f2}, from which uniqueness will follows:
\begin{proposition}[Comparison principle]\label{prop:comp} 
    Assume that $g$ satisfies \eqref{eq:f1}-\eqref{eq:f2} and that $K$ satisfies \eqref{eq:K1}.
   Let $u_1(t,x)$ be a solution \hb{and $u_2(t,x)$ a subsolution} of \eqref{eq:main0} with Lipschitz initial data $u_1^{in}  $ and $u_2^{in} $ satisfying
    $$ 0\leq u_2^{in}(x)\leq u_1^{in}(x) \leq 1\quad \forall x\in \R^d.$$
    Then
    $$ 0\leq u_2(t,x) \leq u_1(t,x) \leq 1 \quad \forall x\in \R^d, \; \forall t>0.$$
\end{proposition}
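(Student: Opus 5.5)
The plan is to exploit the monotone structure of the right-hand side of \eqref{eq:main0} with respect to the saturated set. On $\{u<1\}$ we can write the right-hand side as
$$
G(u,\chi) := g(u) + (g(1)-g(u))\,K*\chi ,\qquad \chi={\mathbb 1}_{\{u=1\}} .
$$
Under \eqref{eq:f2} we have $g(1)-g(u)\geq 0$, and $K\geq 0$, so $G$ is nondecreasing in $\chi$. In $u$, the function $G$ is Lipschitz with constant at most $L$ (indeed $\partial_u G = g'(u)(1-K*\chi)$ and $0\le K*\chi\le 1$). The proof is a nonlocal version of the ODE comparison argument. The one genuinely nonlocal difficulty is that ordering of the functions is needed to order the saturated sets, and vice versa.

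\textbf{Step 1 (the computation under a priori ordering).} Suppose $u_2\leq u_1$ on $[0,t]\times\R^d$. Then $\{u_2(s)=1\}\subset\{u_1(s)=1\}$, so $\chi_2\le \chi_1$ and $K*\chi_2\leq K*\chi_1$. Fix $x$ and split into cases.
\begin{itemize}
\item At points where $u_1(s,x)=1$, there is nothing to prove.
\item At points where $u_2(s,x)=1>u_1(s,x)$: this cannot happen, by the ordering.
\item Where both are $<1$, we get
$$
\pa_t(u_2-u_1)\leq G(u_2,\chi_2)-G(u_1,\chi_1)\leq G(u_2,\chi_1)-G(u_1,\chi_1)\leq L\,|u_2-u_1|.
$$
\end{itemize}
This shows the ordering is self-consistent. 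The real task is to show it cannot be lost.

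\textbf{Step 2 (strict separation and first crossing time).} I would introduce a strictly separated comparison function and a first crossing time. Since $u_1$ is nondecreasing in time, its saturated set only grows. I would compare $u_1$ with $w_\varepsilon:=u_2-\varepsilon e^{2Lt}$ and set
$$
t_\varepsilon:=\sup\{t:\ w_\varepsilon<u_1 \text{ on } [0,t]\times\R^d\}.
$$
The hope is that on $[0,t_\varepsilon)$ the inequality $w_\varepsilon<u_1$ still gives $\{u_2=1\}\subset\{u_1>1-\varepsilon e^{2Lt}\}$. The defect $K*(\chi_2-\chi_1)_+$ is then supported where $u_1\in(1-\varepsilon e^{2Lt},1)$.

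In the relevant region $\{u_2<1\}$, it seems better to combine the estimate with the monotone structure as follows. At a point where $u_1<1$ and $u_2\geq u_1$, the term $(g(1)-g(u_2))K*\chi_2$ is dominated by $(g(1)-g(u_2))K*{\mathbb 1}_{\{u_2\ge u_1\}\cup\{u_1=1\}}$. This reduces the estimate to controlling the measure of the thin set $\{1-\varepsilon e^{2Lt}<u_1<1\}\cap \supp K(x-\cdot)$. I would then aim for a Gronwall inequality for
$$
e(t)=\sup_x(u_2-u_1)_+(t,x),
$$
together with an error term that vanishes as $\varepsilon\to0$. Letting $\varepsilon\to 0$ would then give $u_2\leq u_1$.

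\textbf{Main obstacle.} The hard step is closing Step 2. The source term is controlled by the Lebesgue measure of $\{u_2=1\}\setminus\{u_1=1\}$, which is not controlled by $\sup(u_2-u_1)_+$. A point can saturate for $u_2$ while $u_1$ stays just below $1$, and this affects all neighbours within distance $\ell$.

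My first attempt would use the time monotonicity of $u_1$ and the Lipschitz bound $\pa_t u_1\leq L$. These imply that any point of $\{u_2(t)=1\}\setminus\{u_1(t)=1\}$ at a time $t$ shortly after a time $t_0$ where the functions are ordered must satisfy $u_1(t_0,x)\geq 1-L(t-t_0)$. This reduces the problem to the measure of the layer $\{1-Lh\le u_1(t_0)<1\}$. That measure tends to $0$ as $h\to 0$ (locally, by monotone convergence), which gives a short-time continuation argument past the maximal ordering time.

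If the decay of this measure is not quantitative enough, I would instead argue by induction on small time steps $h$. On each step I would freeze the saturated set of $u_1$ at the beginning of the step and use the monotonicity of $G$ in $\chi$. The resulting errors should then sum to something of order $o(1)$ as $h\to0$.
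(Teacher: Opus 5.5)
There is a genuine gap, and it is the one you flag yourself: Step 2 is never closed, and neither remedy you suggest closes it. As long as you propagate the ordering of the \emph{functions}, the defect $(g(1)-g(u_2))\,K*(\chi_2-\chi_1)_+$ lives on $\{u_2=1>u_1\}\subset\{1-E\le u_1<1\}$, where $E=\sup(u_2-u_1)_+$. Bounds on the measure of this layer cannot force $E=0$.

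\begin{itemize}
\item The qualitative fact that $|\{1-Lh\le u_1(t_0,\cdot)<1\}\cap B_R|\to0$ as $h\to0$ only gives $e(t)=o(t-t_0)$ past the maximal ordering time, not $e\equiv0$. It is also only local in $x$, while your Gronwall argument needs a bound uniform over $\R^d$.
\item The smallness is not uniform in time either. If $u_1^{in}\equiv1-\epsilon_0$ on a large ball, that whole ball lies in $\{1-\epsilon\le u_1<1\}$ just before it saturates, for every $\epsilon>0$.
\item A quantitative version does not help. Using $\pa_t u_1\ge g(u_1)$ on $\{u_1<1\}$ to bound the total time a point spends in the layer gives only $E\le e^{LT}\,\frac{g(1)}{\min_{[1-E,1]}g}\,E$. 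That constant is always $\ge1$, so the estimate is exactly borderline.
\item Freezing $\chi_1$ on time steps changes nothing, because the error is governed by the unknown $E$, not by $h$.
\item The vertical shift $u_2-\varepsilon e^{2Lt}$ does not help, because it leaves the saturated set of $u_2$ unchanged, and that set is what drives its nonlocal term.
\end{itemize}

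The missing idea is to make the comparison function a \emph{strict} supersolution and to propagate the ordering of the \emph{saturated sets} rather than of the functions. The paper uses $v_1(t,x)=u_1((1+\delta)t+\eta,x)$.

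\begin{itemize}
\item The time shift $\eta$ uses the non-degeneracy $\pa_t u_1\ge r/2$ on $\{1/2\le u_1<1\}$, as in Lemma \ref{lem:sat}. It gives $\Lambda(0)<1$, where $\Lambda(t)=\sup_{\overline{\{v_1(t,\cdot)<1\}}}u_2(t,\cdot)$.
\item The factor $1+\delta$ makes $v_1$ a strict supersolution.
\item Up to the first time $t_0$ with $\Lambda(t_0)=1$, one has $\{u_2=1\}\subset\{v_1=1\}$, so the nonlocal terms are ordered by \eqref{eq:f2}.
\item A pointwise ODE comparison at points $x_n\in\overline{\{v_1(t_0,\cdot)<1\}}$ with $u_2(t_0,x_n)\to1$ then yields the uniform gap $v_1-u_2\ge\frac{\delta r\tau}{2}e^{-Lt_0}$ at $(t_0,x_n)$. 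This contradicts $v_1\le1$.
\end{itemize}

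No control of level layers is ever needed, and one concludes by letting $\delta,\eta\to0$.
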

It is interesting to note that this comparison principle does not hold without the condition \eqref{eq:f2} on $g$.
Indeed, if 
there exists $\alpha\in(0,1)$ such that $g(\alpha) > g(1)$, 
we can construct a simple counterexample in dimension 1 as follows (here we are also assuming that  $K>0$ in
$(-\hb{\ell}, \hb{\ell})$):
Define $u_2^{in} \equiv \alpha$ and let $u_1^{in}$ be a smooth function such that $u_1^{in}\geq u_2^{in} $ in $\R$, $u_1^{in} (x)=1 $ for $x<0$ and $u_1^{in} (\ell/2) = \alpha$.
Then $\pa_t u_2(0,\ell/2) = g(\alpha)$ while  
$\pa_t u_1(0,\ell/2) = g(\alpha) (1-K*{\mathbb 1}_{x<0}) + g(1) K*{\mathbb 1}_{x<0} < g(\alpha)$ (since $K*{\mathbb 1}_{x<0}(\ell/2)>0$).
It follows that, at least for short time, $u_1(t,\ell/2) < u_2(t,\ell/2)$.

\medskip

The comparison principle implies:
\begin{corollary}
Under the conditions of Proposition \ref{prop:comp},
let $u^{in}(x)\in [0,1]$ be a Lipschitz function in $\R^d$. Equation \eqref{eq:main0} has a unique solution satisfying  $u(0,x)=u^{in}(x)$.
\end{corollary}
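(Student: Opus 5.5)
Existence comes from Theorem~\ref{thm:limit}: for a Lipschitz $u^{in}$ with values in $[0,1]$, the limit $u_\infty$ of (a subsequence of) $u_\gamma$ is a solution of \eqref{eq:main0} in the sense of Definition~\ref{def:sol}, with $u_\infty(0,\cdot)=u^{in}$, values in $[0,1]$, and $u_\infty$ Lipschitz in $(t,x)$. So only uniqueness needs proof, and we obtain it from the comparison principle, Proposition~\ref{prop:comp}.

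Let $u_1$ and $u_2$ be two solutions of \eqref{eq:main0} with the same initial datum $u^{in}$. Every solution is in particular a subsolution: the defining relation holds with equality a.e. in $t$ for each $x$, and the requirements $0\leq u\leq 1$, $u(t,\cdot)\in L^\infty$ and Lipschitz continuity in $t$ are common to both definitions. We then apply Proposition~\ref{prop:comp} twice.
\begin{itemize}
\item With $u_1$ as the solution and $u_2$ as the subsolution, and initial data $u_2^{in}=u_1^{in}=u^{in}$ (so the ordering hypothesis holds with equality), we get $u_2\leq u_1$ for all $t>0$ and $x\in\R^d$.
\item Exchanging the roles of $u_1$ and $u_2$, we get $u_1\leq u_2$.
\end{itemize}
Hence $u_1\equiv u_2$.

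The only point to check is that the uniqueness class matches the hypotheses of Proposition~\ref{prop:comp}. That proposition asks only that the initial data be Lipschitz, which holds here, and that the two functions be a solution and a subsolution in the sense of Definition~\ref{def:sol}. It imposes no extra spatial regularity on the solutions themselves. So uniqueness holds among all solutions in the sense of Definition~\ref{def:sol}, not just among those produced by the limit procedure.

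As a by-product, every subsequence of $u_\gamma$ in Theorem~\ref{thm:limit} converges to this same solution, so the whole family $u_\gamma$ converges locally uniformly as $\gamma\to\infty$. There is no real obstacle: the substantive difficulty is contained in Proposition~\ref{prop:comp}, which we assume.
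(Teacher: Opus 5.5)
Your proposal is correct and follows the paper's argument: existence comes from Theorem~\ref{thm:limit}, and uniqueness comes from applying Proposition~\ref{prop:comp} twice with the roles exchanged, since any solution is in particular a subsolution. The spatial Lipschitz regularity needed inside that comparison argument is supplied for arbitrary solutions by Lemma~\ref{lem:Lip}, so uniqueness does hold in the full class of Definition~\ref{def:sol}, as you claim.
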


\medskip

\noindent{\bf Invasion.}
If we further assume that $K>0$ in $B_\ell(0)$, we can show that 
the colony will invade the whole space. 
This invasion property  follows from the time monotonicity  of the solutions and the fact that \hb{a stationary solution of \eqref{eq:main0} either vanishes almost everywhere or is identically equal to $1$} (see Lemma \ref{lem:stat}):
\begin{proposition}\label{prop:invasion}
Assume that $K$ and $g$   satisfy respectively \eqref{eq:K1} and \eqref{eq:f1} and that $K>0 $ in $B_\ell(0)$.
Any solution $u(t,x)$ of \eqref{eq:main0} such that $u(0,\cdot)$ is not identically zero satisfies 
$$u(t,x)\nearrow 1 \quad \mbox{ as  } t\nearrow \infty, \; \mbox{ for all } x\in\R^d.$$
\end{proposition}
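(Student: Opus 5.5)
By Definition~\ref{def:sol}, for every $x$ the map $t\mapsto u(t,x)$ is Lipschitz. The right-hand side of \eqref{eq:main0} is nonnegative, since $g\geq 0$ and $0\le K*{\mathbb 1}_{\{u=1\}}\le 1$. So $t\mapsto u(t,x)$ is nondecreasing and bounded by $1$, and the pointwise limit $u_\infty(x):=\lim_{t\to\infty}u(t,x)\in[0,1]$ exists for every $x$. The saturated sets $S(t)=\{u(t,\cdot)=1\}$ are nondecreasing in $t$. Hence ${\mathbb 1}_{S(t)}\nearrow {\mathbb 1}_{S_\infty}$ with $S_\infty=\bigcup_t S(t)$, and by monotone convergence $K*{\mathbb 1}_{S(t)}\nearrow K*{\mathbb 1}_{S_\infty}$ pointwise.

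The plan is to show that $u_\infty$ is a stationary solution, to invoke Lemma~\ref{lem:stat} (a stationary solution either vanishes a.e.\ or is identically $1$), and then to exclude the first alternative.

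\textbf{Step 1: the limit is stationary.} Fix $x$ with $u_\infty(x)<1$. Then $u(t,x)<1$ for all $t$, and
\[
u_\infty(x)-u(0,x)=\int_0^\infty \Big[g(u)(1-K*{\mathbb 1}_{S(t)})+g(1)K*{\mathbb 1}_{S(t)}\Big](t,x)\,dt<\infty .
\]
The integrand converges as $t\to\infty$ to $h(x):=g(u_\infty)(1-K*{\mathbb 1}_{S_\infty})+g(1)K*{\mathbb 1}_{S_\infty}$, because $g$ is continuous and the convolutions converge. An integrable function on $(0,\infty)$ with a limit must have limit $0$, so $h(x)=0$. This says that $u_\infty$ satisfies the stationary version of \eqref{eq:main0} at every point with $u_\infty<1$. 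A subtlety is that $\{u_\infty=1\}$ may be strictly larger than $S_\infty$, since points where $u\to 1$ without ever reaching $1$ are in the first set but not the second. At such a point, however, $h(x)\ge g(1)\min(1,\,\cdot)\ge$ something with $g(u_\infty(x))=g(1)>0$ by continuity, which would contradict integrability. I therefore expect to check that the identity $h=0$ forces either $u_\infty<1$ with $g(u_\infty)=0$ and $K*{\mathbb 1}_{S_\infty}=0$, or $x\in S_\infty$. I will set this up so that Lemma~\ref{lem:stat} applies to $u_\infty$, with saturated set $S_\infty$.

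\textbf{Step 2: applying Lemma~\ref{lem:stat}, with the main obstacle.} Since $g(u)\ge ru$ and $g(1)\geq r>0$, the relation $h(x)=0$ gives $u_\infty(x)=0$ and $K*{\mathbb 1}_{S_\infty}(x)=0$ wherever $u_\infty(x)<1$. So $u_\infty$ takes only the values $0$ and $1$, and no point of $\{u_\infty=0\}$ lies within distance $\ell$ of $S_\infty$ in the measure sense, using $K>0$ on $B_\ell$. The main obstacle is to rule out a nonempty proper subset. If $S_\infty$ has positive measure and $u_\infty=0$ somewhere, then by monotonicity $u(t,x)=0$ for all $t$ at such a point, so $\partial_t u(t,x)=0$ forces $K*{\mathbb 1}_{S(t)}(x)=0$. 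I will argue via Lemma~\ref{lem:stat} that such a configuration is impossible, for example by taking a point of density of $S_\infty$ and a nearby point of $\{u_\infty=0\}$ and using the Lipschitz continuity of $u(t,\cdot)$, which comes from Theorem~\ref{thm:limit} together with uniqueness, to obtain a contradiction.

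\textbf{Step 3: the zero case.} It remains to exclude $u_\infty\equiv 0$ a.e. Since $u(0,\cdot)\not\equiv 0$ and $u(0,\cdot)$ is continuous, there is a ball on which $u(0,\cdot)>0$. On that ball $u(t,x)\ge u(0,x)>0$ for all $t$, so $u_\infty>0$ on a set of positive measure. Therefore $u_\infty\equiv1$, and $u(t,x)\nearrow 1$ for every $x$.
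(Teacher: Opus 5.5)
Your proposal is correct and follows the paper's proof. The steps are the same: the monotone pointwise limit, stationarity of $u_\infty$, then Lemma~\ref{lem:stat}, then positivity of the continuous initial datum on a ball to exclude $u_\infty=0$ a.e. Your integrability argument is equivalent to the paper's choice of times $t_n$ with $\partial_t u(t_n,x)\to 0$; stated cleanly, points with $u(t,x)<1$ for all $t$ but $u_\infty(x)=1$ are ruled out because there the integrand tends to $g(1)>0$. The only blemish is Step~2: once you have $u_\infty={\mathbb 1}_{S_\infty}$ with $K*{\mathbb 1}_{S_\infty}=0$ off $S_\infty$, Lemma~\ref{lem:stat} applies verbatim and no obstacle remains. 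The density-point detour is therefore unnecessary, and as sketched it has two problems: it leans on uniqueness, which requires \eqref{eq:f2}, not assumed here (Lemma~\ref{lem:Lip} is the right source of Lipschitz bounds), and it takes for granted that a density point of $S_\infty$ lies within distance $\ell$ of $\{u_\infty=0\}$, which is exactly what the covering argument of Lemma~\ref{lem:stat} proves.
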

Furthermore, we will prove that this invasion occurs at least at linear speed, that is:
\begin{lemma}\label{lem:invasion}
\hb{Assume that $K$ and $g$ satisfy respectively \eqref{eq:K1} and \eqref{eq:f1}, and that $\Supp K = B_\ell(0)$. If $u^{in}(x)>\eta$ in $B_\delta(0)$} for some positive $\eta$ and $\delta$, there exist $\underline c>0$ and $t_0\geq 0$ such that: 
\begin{equation}\label{eq:ndgsp}
 \{u(t,\cdot)=1\} \supset B_{\underline c(t-t_0)_+}(0).  
\end{equation}
\end{lemma}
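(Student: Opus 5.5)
We will prove the lemma by showing that a saturated ball keeps growing by a fixed radius over each fixed time interval. The proof has two stages: first producing an initial saturated ball, then iterating a local saturation step.

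\emph{Step 1: reaching saturation.} Recall that $u$ is nondecreasing in $t$. On $B_\delta(0)$ we have $u\geq\eta$. While $u<1$ there, the equation gives
$$\pa_t u\geq g(u)(1-K*{\mathbb 1}_{\{u=1\}})+g(1)K*{\mathbb 1}_{\{u=1\}}\geq \min(g(u),g(1)),$$
since the right-hand side is a convex combination of $g(u)$ and $g(1)$. By \eqref{eq:f1}, $g(u)\geq ru\geq r\eta$ and $g(1)\geq r$. Hence $\pa_t u\geq r\eta$ as long as $u<1$. It follows that $u(t,x)=1$ for all $x\in B_\delta(0)$ and all $t\geq t_1:=1/(r\eta)$.

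\emph{Step 2: propagation step.} Suppose $B_R(0)\subset\{u(t,\cdot)=1\}$ with $R\geq\delta$. Take a point $x$ with $R\leq|x|\leq R+\rho$, where $\rho=\min(\delta,\ell)/2$.
\begin{itemize}
\item The ball $B_R(0)$ intersects $B_\ell(x)$ in a region containing a ball of radius comparable to $\rho$. Therefore, using $\Supp K=B_\ell(0)$, we get $K*{\mathbb 1}_{B_R}(x)\geq \kappa>0$, where $\kappa$ depends only on $K$, $\ell$ and $\delta$, and is uniform in $R\geq\delta$.
\item This lower bound needs $K$ to be positive on a set of positive measure near every direction. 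It follows from radial symmetry and $\Supp K=B_\ell(0)$: I would take $\kappa=\inf_{|z|\leq \ell-\rho/2}\int_{B_{\rho/2}(z)}K$ and argue it is positive by compactness and continuity in $z$.
\end{itemize}
Then for $s\geq t$, as long as $u(s,x)<1$,
$$\pa_t u(s,x)\geq g(1)K*{\mathbb 1}_{\{u=1\}}\geq r\kappa,$$
using $g\geq0$ and monotonicity of the saturated set in time. Hence $x$ becomes saturated by time $t+\tau$ with $\tau=1/(r\kappa)$, uniformly in such $x$. The conclusion is that $B_{R+\rho}(0)\subset\{u(t+\tau,\cdot)=1\}$.

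\emph{Step 3: iteration.} Starting from Step 1 and applying Step 2 repeatedly gives
$$B_{\delta+n\rho}(0)\subset\{u(t_1+n\tau,\cdot)=1\}$$
for every integer $n\geq0$. Since the sets $\{u(t,\cdot)=1\}$ are increasing in $t$, this yields \eqref{eq:ndgsp} with $\underline c=\rho/\tau$ and $t_0=t_1+\tau$; the extra $\tau$ absorbs the integer-part rounding.

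The main obstacle is the uniform positivity of $\kappa$, that is, the mass of $K$ over the lens-shaped region $B_R(0)\cap B_\ell(x)$, uniformly in $R$. I would handle it by inscribing a fixed small ball $B_{\rho/2}(z)$ with $|z-x|\leq\ell-\rho/2$ inside both $B_R(0)$ and $B_\ell(x)$. Such a ball exists: move from $x$ toward the origin by a distance $\rho$, which stays inside $B_R$ because $R\geq\delta\geq2\rho$. Positivity of the integral of $K$ over every such small ball contained in the support then follows from $\Supp K=B_\ell(0)$.
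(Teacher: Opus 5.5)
Your proposal is correct and follows essentially the same route as the paper. You saturate $B_\delta(0)$ by time $1/(r\eta)$ using $\pa_t u\geq \min(g(u),g(1))$. You then get a uniform positive lower bound on $K*{\mathbb 1}_{\{u=1\}}$ in a shell around a saturated ball of radius $R\geq\delta$, by translating a fixed inscribed ball and using continuity and compactness, and you iterate. The paper uses $\eta_0=\inf_{\overline{B_\ell(0)}}K*{\mathbb 1}_{B_\delta(0)}$ with spatial step $\ell-\delta$; your $\kappa$ with step $\rho$ has the minor advantage of not implicitly requiring $\delta<\ell$.

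One small slip needs fixing. Moving from $x$ toward the origin by $\rho$ does not put $B_{\rho/2}(z)$ inside $B_R(0)$ when $R+\rho/2<|x|\leq R+\rho$. Move by $3\rho/2$ instead. This is admissible: $2\rho\leq\ell$ gives $|x-z|=3\rho/2\leq\ell-\rho/2$, and $|x|\geq R\geq 2\rho$ keeps $z$ on the same ray. Also, radial symmetry of $K$ is not assumed in this lemma, but your compactness argument does not need it.
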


Next, we will characterize more precisely 
the expansion of the saturated set (and the support) of the solution as $t\to\infty$.

\medskip

\medskip

\noindent{\bf Traveling waves.}
One of the great successes of reaction-diffusion equations lies in
their ability to precisely describe the spreading of initially compactly supported solutions (speed and profile) by means of traveling wave solutions. 
The aim of the second part of this paper is to establish similar properties for \eqref{eq:main0}.  
We will prove the existence of planar traveling waves with speed $c\in [c^*,+\infty)$ for some $c^*>0$, and that compactly supported initial data spread with speed $c^*$.

We recall that planar traveling waves are solutions of the equation of the form $u(t,x) = \phi(x\cdot e-ct)$ with $e\in\mathbb S^{d-1}$ (the direction of propagation) and $c>0$ (the speed of propagation) and where the {\em profile}  $\phi : \R \to [0,1]$ connects the two stationary states:
$$ \lim_{s\to -\infty} \phi(s) = 1, \qquad  \lim_{s\to +\infty} \phi(s) = 0. $$
The existence of such solutions is a fundamental feature of reaction-diffusion equations and  the following theorem should be compared to the classical results for the \hb{Fisher}-KPP equation (\cite{KPP1937, AW}):
\begin{theorem}\label{thm:TW}
Assume that $g$ satisfies \eqref{eq:f1}-\eqref{eq:f2} and that $K$ satisfies \eqref{eq:K1}-\eqref{eq:K2}.
There exists a minimal speed $c^*>0$ such that for all $c\geq c^*$, \eqref{eq:main0} has a  unique (up to translation) traveling wave solution $\phi_c$ with speed $c$ while no traveling wave exists with speed $c<c^*$. 

For all $c\geq c^*$, the saturated set of the profile $\phi_c$ is  a half-line, so that   
up to translation, it satisfies  $\{\phi_c=1\}=(-\infty,0]$. Then, the followings hold:
\begin{itemize}
\item[(1)]    $\phi_c(s)>0$ for all $s\in \R$, for all $c>c^*$,
\item[(2)] \hb{$\{\phi_{c^*}>0\} = (-\infty,\ell)$.}
\end{itemize}
\end{theorem}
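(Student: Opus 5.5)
Throughout, I write a traveling wave as $u(t,x)=\phi(x\cdot e-ct)$. Since $K$ is radial by \eqref{eq:K2}, I introduce its one-dimensional marginal $k_1(z)=\int_{\{y\cdot e=z\}}K$. It is even, supported in $[-\ell,\ell]$, and has total mass $1$. Set $J(s)=\int_s^{\infty}k_1(z)\,dz$, so that $J$ is nonincreasing, $J(0)=1/2$ and $J\equiv 0$ on $[\ell,\infty)$. The plan is to reduce everything to a scalar ODE shooting problem in $s$ with the parameter $c$, and then read off $c^*$ from a monotonicity argument in $c$.

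\emph{Step 1: structure of any wave.} Since solutions are nondecreasing in time, $-c\phi'\ge 0$, so $\phi$ is nonincreasing. I first claim that $\{\phi=1\}\neq\emptyset$. If $\phi<1$ everywhere, then $K*{\mathbb 1}_{\{u=1\}}\equiv 0$, hence $-c\phi'=g(\phi)$. Because $g(1)\ge r>0$ by \eqref{eq:f1}, $|\phi'|\ge r/(2c)$ near the value $1$. Then $\phi$ reaches $1$ at finite $s$, which is a contradiction. The set $\{\phi=1\}$ is closed, it is an interval unbounded below by monotonicity, and it is bounded above because $\phi\to 0$. So after translation $\{\phi=1\}=(-\infty,0]$, and $K*{\mathbb 1}_{\{u=1\}}(x)=J(x\cdot e-ct)$. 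The profile must therefore solve
\begin{equation*}
\phi(0)=1,\qquad -c\,\phi'(s)=g(\phi)(1-J(s))+g(1)J(s)\quad (s>0).
\end{equation*}
This has Lipschitz right-hand side in $\phi$, so its solution is unique. This gives uniqueness up to translation for each $c$. Conversely, any solution of this ODE that stays in $[0,1)$ on $(0,\infty)$ yields a Lipschitz wave in the sense of Definition \ref{def:sol}. On $[\ell,\infty)$ the ODE reduces to $-c\phi'=g(\phi)$. Hence if $\phi(\ell)>0$, the solution stays positive and decays to $0$ using $g(u)\ge ru$. If $\phi(\ell)=0$, then $\phi\equiv 0$ on $[\ell,\infty)$ since $g(0)=0$.

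\emph{Step 2: shooting in $c$.} I extend $g$ by $0$ on $(-\infty,0)$ and let $\phi_c$ be the ODE solution on $[0,\ell]$. A wave with speed $c$ exists if and only if $\phi_c>0$ on $[0,\ell)$ and $\phi_c(\ell)\ge 0$. If $\phi_c$ vanishes at some $s_0<\ell$, then $J(s_0)>0$ and $\phi_c'(s_0)=-g(1)J(s_0)/c<0$, so $\phi_c$ becomes negative. The key monotonicity is that $c\mapsto\phi_c(s)$ is strictly increasing for each $s\in(0,\ell]$. To see this, write $\phi'=-\frac1c G(s,\phi)$ with $G\ge 0$. For $c_1<c_2$ the first curve has strictly steeper slope at $s=0^+$, because $G(0,1)=g(1)>0$. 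A standard ODE comparison with Lipschitz $G$ then keeps $\phi_{c_1}<\phi_{c_2}$ on $(0,\ell]$.

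By continuous dependence, $\Phi(c):=\phi_c(\ell)$ is continuous and strictly increasing. For small $c$, the slope $-g(1)J(s)/c$ on $[0,\ell/2]$ forces $\Phi(c)<0$. For large $c$, $|\phi_c'|\le L/c$ gives $\Phi(c)>0$. I then define $c^*$ as the unique zero of $\Phi$. For $c>c^*$ we get $\Phi(c)>0$, so $\phi_c>0$ everywhere, which is item (1). For $c<c^*$, $\phi_c$ becomes negative before $\ell$, so no wave exists. For $c=c^*$, we have $\phi_{c^*}>0$ on $[0,\ell)$, since a zero at $s_0<\ell$ would force $\Phi(c^*)<0$, and $\phi_{c^*}\equiv0$ on $[\ell,\infty)$. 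This gives $\{\phi_{c^*}>0\}=(-\infty,\ell)$, which is item (2).

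\emph{Main obstacle.} The delicate point is the rigorous strict comparison in $c$ and the continuity of $\Phi$ across the point where $\phi$ hits $0$. The latter is why I extend $g$ by $0$ for negative arguments, so that the ODE is globally Lipschitz. I also need to check that the wave really satisfies the free-boundary equation (in particular $\phi<1$ for $s>0$, which follows from $\phi'(0^+)<0$). Assumption \eqref{eq:f2} is not needed in this shooting argument itself. It enters only if one wants to identify $c^*$ via the comparison principle later.
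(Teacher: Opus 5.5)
Your proposal is correct and follows essentially the paper's route: reduction to the initial value problem \eqref{eq:c} (your $J$ is the paper's $h$), strict monotonicity of $c\mapsto\phi_c$ on $(0,\ell]$ as in Lemma \ref{lem:mono} (the only delicate case is a touching at $s=\ell$ where the right-hand side can vanish, and it is settled by exactly the Gr\"onwall-type comparison you invoke), explicit small-$c$ and large-$c$ bounds showing that $\phi_c(\ell)$ changes sign, and continuity in $c$. Your added justification that $\{\phi=1\}\neq\emptyset$ for any wave, and your remark that \eqref{eq:f2} plays no role in this argument, are both correct.
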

This theorem will be proved in Section \ref{sec:TW}. 

\medskip
\noindent{\bf Spreading.} Next we discuss spreading properties. The existence of traveling wave solutions and the comparison principle imply a bound on the speed of spreading. In our setting, since $\phi_{c^*}$ is semi-compactly supported, we actually get a bound on the speed of propagation of the support of the solution:
\begin{corollary}\label{cor:supp}
Under the assumptions of Theorem \ref{thm:TW},
    let $u^{in}:\R^d\to [0,1]$ be a Lipschitz  initial state  with $\supp u^{in} \subset B_R(0).$
    Then the corresponding solution  $u(t,x)$ of \eqref{eq:main0} satisfies
    $$ \Supp(u(t,\cdot)) \subset B_{R+\ell+c^*t}(0).$$
\end{corollary}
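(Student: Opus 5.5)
We prove Corollary \ref{cor:supp} by comparing $u$ with translated and rotated copies of the critical traveling wave $\phi_{c^*}$, using Proposition \ref{prop:comp}. Since the theorem is only needed for the support, a supersolution is enough; here the comparison is applied with $u$ as the subsolution (it is a solution) and the traveling wave as the solution.

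Fix a direction $e\in\mathbb S^{d-1}$. By Theorem \ref{thm:TW}, normalized so that $\{\phi_{c^*}=1\}=(-\infty,0]$ and $\{\phi_{c^*}>0\}=(-\infty,\ell)$, define
$$
w_e(t,x)=\phi_{c^*}\big(x\cdot e - R - c^* t\big).
$$
This is a planar traveling wave, hence a solution of \eqref{eq:main0} in the sense of Definition \ref{def:sol}. The equation is invariant under rotations because $K$ is radial by \eqref{eq:K2}, and under translations. The profile $\phi_{c^*}$ is Lipschitz (Theorem \ref{thm:TW} produces Lipschitz profiles, cf. the discussion after Theorem \ref{thm:limit}), so $w_e(0,\cdot)$ is Lipschitz and the comparison principle applies.

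The key inequality is at $t=0$. For $x\in B_R(0)$ we have $x\cdot e - R\le 0$, so $w_e(0,x)=1\ge u^{in}(x)$. Outside $B_R(0)$ we have $u^{in}=0\le w_e(0,x)$. Hence $u^{in}\le w_e(0,\cdot)$ on $\R^d$. Proposition \ref{prop:comp}, applied with $u_1=w_e$ and $u_2=u$, gives $u(t,x)\le w_e(t,x)$ for all $t>0$. Since $\phi_{c^*}(s)=0$ for $s\ge \ell$, we get $u(t,x)=0$ whenever $x\cdot e\ge R+\ell+c^*t$.

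Intersecting over all $e$: any $x$ with $|x|\ge R+\ell+c^*t$ satisfies this with $e=x/|x|$, so $u(t,x)=0$ there. Hence $\Supp u(t,\cdot)\subset \overline{B_{R+\ell+c^*t}(0)}$, which is the claim up to the boundary convention for the support.

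The only delicate point is justifying that $w_e$ qualifies as a solution with Lipschitz initial data in Proposition \ref{prop:comp}. That requires the profile to be Lipschitz and to satisfy \eqref{eq:main0} pointwise a.e. in $t$ for every $x$. Both follow from the construction in Section \ref{sec:TW}, which we take as given.
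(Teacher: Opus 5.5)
Your proof is correct and is essentially the paper's argument: the paper also compares $u$ with the rotated and translated minimal-speed wave $\phi_{c^*}(x\cdot e - c^*t - M)$ (your choice is $M=R$) via Proposition \ref{prop:comp}, takes the infimum over $e\in\mathbb S^{d-1}$, and uses that $\phi_{c^*}$ vanishes on $[\ell,\infty)$. Your boundary caveat can be removed: since $\Supp u^{in}$ is a compact subset of the open ball $B_R(0)$, it lies in $\overline{B_{R'}(0)}$ for some $R'<R$, and running your argument with $R'$ in place of $R$ gives the strict inclusion $\Supp u(t,\cdot)\subset B_{R+\ell+c^*t}(0)$.
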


Interestingly, for compactly supported initial conditions,
the support of $u(t,\cdot)$ and the saturated set 
$$S(t)= \{x\in \R^d\, ;\, u(t,x)=1\}$$
remain close for all $t$:
The following result (proved in Section \ref{sec:spreading}) shows that the support of the solution is confined to a finite neighborhood of the congested  set:
\begin{proposition}\label{prop:support}
Under the assumptions of Theorem \ref{thm:limit}, assume further that $\Supp \, K \subset B_\ell$.
Then any solution  $u(t,x)$ of \eqref{eq:main0} satisfies
$$\mathrm{Supp} \, u(t,\cdot)  \subset \mathrm{Supp} \, u(0,\cdot) \cup \big(  S(t) + B_\ell\big) , \qquad    
\mbox{for all $t>0$.}$$
\end{proposition}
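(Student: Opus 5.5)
Fix $t>0$ and a point $x\notin \Supp u(0,\cdot)\cup (S(t)+B_\ell)$. The plan is to show that $u(s,x)=0$ for all $s\in[0,t]$, working pointwise in $x$ and using the ODE structure of Definition~\ref{def:sol}.

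First I will use time monotonicity. Since $\pa_t u\ge 0$, the saturated sets are nondecreasing in time: $S(s)\subset S(t)$ for $s\le t$. Because $x\notin S(t)+B_\ell$, we get $B_\ell(x)\cap S(s)=\emptyset$ for every $s\in[0,t]$. As $\Supp K\subset B_\ell$, this gives
$$K*{\mathbb 1}_{\{u(s,\cdot)=1\}}(x)=\int_{B_\ell(x)}K(x-y){\mathbb 1}_{S(s)}(y)\,dy=0 .$$
Here a little care is needed with closed versus open balls and with the meaning of $\Supp$. Since $K\in L^1$, boundary sets of measure zero are irrelevant. Moreover, $u(s,\cdot)$ is continuous (it is Lipschitz by Theorem~\ref{thm:limit}, and it is the unique solution by the corollary whenever that applies), so $S(s)$ is closed. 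Hence $x\notin S(t)+B_\ell$ gives $\mathrm{dist}(x,S(t))\ge \ell$, and the convolution vanishes.

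Second, along $s\in[0,t]$ the function $v(s)=u(s,x)$ satisfies, for a.e. $s$,
$$\pa_s v = g(v){\mathbb 1}_{\{v<1\}}, \qquad 0\le \pa_s v\le g(v)\le L v ,$$
where the last bound uses $g(0)=0$ and the Lipschitz bound in \eqref{eq:f1}. Since $x\notin\Supp u(0,\cdot)$, we have $v(0)=0$. Gronwall's lemma applied to the Lipschitz function $v\ge0$ then gives $v\equiv 0$ on $[0,t]$. Equivalently, this is uniqueness for the Lipschitz ODE $v'=g(v)$ with $v(0)=0$; the indicator ${\mathbb 1}_{\{v<1\}}$ only makes the right-hand side smaller, so the upper bound $\pa_s v\le Lv$ still holds.

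Third, I pass from pointwise vanishing to the support statement. Let $U=\big(\Supp u(0,\cdot)\cup(S(t)+\overline{B_\ell})\big)^c$. Since $\Supp u(0,\cdot)$ is closed, and $S(t)+\overline{B_\ell}$ is closed (a closed set plus a compact set), $U$ is open. The previous step shows $u(t,\cdot)=0$ on $U$, so $\Supp u(t,\cdot)\subset \Supp u(0,\cdot)\cup (S(t)+\overline{B_\ell})$. The only delicate point is the closure $\overline{B_\ell}$ versus $B_\ell$. To reach exactly the stated form, I will argue that points at distance exactly $\ell$ from $S(t)$ still have the convolution vanishing. Such points are limits of points at distance greater than $\ell$, where $u(t,\cdot)=0$, so by continuity of $u(t,\cdot)$ they lie in the closure of the zero set, in agreement with the convention that $\Supp$ denotes the closure of $\{u>0\}$ measured against $B_\ell$ up to its boundary. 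I expect this boundary bookkeeping to be the main (minor) obstacle. The analytic core, monotonicity of $S(s)$ combined with Gronwall's lemma at points out of reach of the kernel, is straightforward.
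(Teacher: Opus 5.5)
Steps 1--2 are correct and are the paper's argument, run directly rather than by contraposition. The paper integrates $\partial_t u\le L\big(u+K*{\mathbb 1}_{\{u=1\}}\big)$ in Duhamel form. It then uses that $s\mapsto K*{\mathbb 1}_{\{u=1\}}(s,x_0)$ is non-decreasing to conclude that $u(0,x_0)=0<u(t_0,x_0)$ forces $|S(t_0)\cap B_\ell(x_0)|>0$. You instead use monotonicity of $S(s)$ to make the convolution vanish on $[0,t]$, and then apply Gronwall to $0\le v'\le Lv$, $v(0)=0$.

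The detour through closedness of $S(s)$ and uniqueness is unnecessary. The condition $x\notin S(t)+B_\ell$ already means $B_\ell(x)\cap S(t)=\emptyset$ for any set $S(t)$, and Lipschitz continuity of every solution is Lemma~\ref{lem:Lip}. Note also what your Step 2, and the paper's proof, actually establish: the inclusion of positivity sets $\{u(t,\cdot)>0\}\subset\{u(0,\cdot)>0\}\cup(S(t)+B_\ell)$. This only uses $u(0,x)=0$, not $x\notin\Supp u(0,\cdot)$.

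The last part of Step 3, the passage from $\overline{B_\ell}$ to $B_\ell$, does not work. Knowing that $u(t,x)=0$ at a point at distance exactly $\ell$ from $S(t)$ (which Step 2 already gives) says nothing about whether $x$ lies in the closure of $\{u(t,\cdot)>0\}$. To exclude $x$ from $\Supp u(t,\cdot)$ you need $u(t,\cdot)=0$ on a neighbourhood of $x$, and points at distance slightly less than $\ell$ from $S(t)$ are not controlled.

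In fact, when $K$ charges all of $B_\ell$ as in \eqref{eq:K2}, the closed-support reading with the open ball is false. Take the minimal wave $u(t,x)=\phi_{c^*}(x\cdot e-c^*t)$, for which $\{u(t,\cdot)>0\}=\{x\cdot e<c^*t+\ell\}$. For $t>0$ the hyperplane $\{x\cdot e=c^*t+\ell\}$ lies in $\Supp u(t,\cdot)$, but it lies neither in $\Supp u(0,\cdot)=\{x\cdot e\le \ell\}$ nor in $S(t)+B_\ell=\{x\cdot e<c^*t+\ell\}$.

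So either read the proposition with positivity sets, as the paper's proof implicitly does, or keep your version with $S(t)+\overline{B_\ell}$. Alternatively, if $\Supp K$ is literally a compact subset of the open ball $B_\ell$, then $\Supp K\subset\overline{B_{\ell'}}$ for some $\ell'<\ell$. Rerunning your Steps 1--3 with $\ell'$ gives the stated form, since $S(t)+\overline{B_{\ell'}}\subset S(t)+B_\ell$.
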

In particular for $t$ large enough so that $\mathrm{Supp} \, u(0,\cdot) \subset S(t)$ (that is once the saturated set overtakes the initial support of $u$), then 
$$ 
S(t) \subset \mathrm{Supp} \, u(t,\cdot)  \subset  S(t) + B_\ell.
$$
The region $\{0<u(t,\cdot)<1\}$ is thus contained in an annulus of width $\ell$ around the saturated set $S(t)$. These results will be proved in Section \ref{sec:spreading}).

\medskip

\noindent {\bf Asymptotic speed of spreading.}
We now show that $c^*$  exactly characterizes the speed of propagation starting from any compactly supported initial configuration.
\begin{theorem}\label{thm:asymptotic-spreading}
Assume that $g$ satisfies \eqref{eq:f1}-\eqref{eq:f2}, and that $K$ satisfies
\eqref{eq:K1}-\eqref{eq:K2}. 
Let $u^{\rm in}\colon\mathbb R^d\to[0,1]$ be Lipschitz continuous,
compactly supported, and not identically zero.  Then the corresponding
solution $u$ of \eqref{eq:main0} propagates with speed $c^*$. More precisely,
\begin{equation}
    \lim_{t\to\infty} \{\sup_{|x| \geq ct} u(t,x) \}=0
        \qtext{ for every }c>c^*, \label{eq:spreading-upper}
\end{equation}
and 
for every $c<c^*$,  there exists $T_c>0$ such that
\begin{equation}\label{eq:ballinvasion}
    B_{ct}(0)\subset\{u(t,\cdot)=1\}
    \qtext{for every }t\ge T_c.
\end{equation}

\end{theorem}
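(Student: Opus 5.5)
The two statements will be handled separately: the upper bound \eqref{eq:spreading-upper} is essentially a consequence of Corollary~\ref{cor:supp}, while the lower bound \eqref{eq:ballinvasion} needs a radially symmetric, compactly supported subsolution travelling at any speed $c<c^*$.

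\emph{Upper bound.} Since $u^{in}$ is supported in some $B_R(0)$, Corollary~\ref{cor:supp} gives
$$\Supp u(t,\cdot)\subset B_{R+\ell+c^*t}(0).$$
For $c>c^*$ we have $ct>R+\ell+c^*t$ once $t>(R+\ell)/(c-c^*)$. Hence $u(t,x)=0$ for $|x|\ge ct$, which is stronger than \eqref{eq:spreading-upper}.

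\emph{Lower bound.} The construction follows the classical KPP strategy through linear determinacy.

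First, I need a characterization of $c^*$. From the traveling wave analysis of Section~\ref{sec:TW}, I expect $c^*$ to be obtained from a "linearized" problem ahead of the saturated half-line. For $s>0$, with $\{\phi=1\}=(-\infty,0]$, the profile solves
$$-c\phi'=g(\phi)(1-\kappa(s))+g(1)\kappa(s),\qquad \kappa(s)=K*\mathbb 1_{\{x\cdot e<0\}}.$$
Since $\kappa$ vanishes for $s\ge\ell$, this is an explicit ODE. The requirement $\phi(0)=1$ with $\phi$ continuous is achievable exactly when $c\ge c^*$.

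Second, I will perturb this ODE to build a subsolution. Fix $c<c'<c^*$, or more conveniently work with $c$ close to $c^*$. I plan to use the strict monotonicity in $c$ of the ODE to construct a truncated profile $\underline\phi$ with speed $c$. It should equal $1$ on $(-\infty,0]$ and solve the ODE with a slightly reduced growth rate, for instance $(1-\epsilon)g$ and $(1-\epsilon)\kappa$. It should also vanish beyond some point $L_\epsilon$ and be a subsolution there, since $0$ is a solution.

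The subtlety is that for $c<c^*$ no wave exists. The useful construction is therefore a profile that reaches $1$ \emph{within a finite window} while being compactly supported ahead. I will aim to use $\phi_{c^*}$ itself, whose support is $(-\infty,\ell)$ by Theorem~\ref{thm:TW}(2), and move it at a slightly slower speed $c<c^*$. Because $\pa_t$ of a slower wave is smaller, $\phi_{c^*}(x\cdot e-ct)$ is a subsolution wherever $\phi_{c^*}'\le0$, which holds by monotonicity. This is the cleanest route, and I will first verify that $\phi_{c^*}$ is nonincreasing.

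Third, I will radialize this subsolution. In dimension $d$, set
$$v(t,x)=\phi_{c^*}\big(|x|-ct-\rho_0\big)\qquad\text{for }|x|\ge\rho_0+ct.$$
The saturated set of $v$ is then a ball of radius $\rho_0+ct$. For a ball of large radius, $K*\mathbb 1_{B_\rho}$ near the boundary is close to the half-space value $\kappa$. The error is of order $\ell^2/\rho$, and the slack $(c^*-c)|\phi'|$ should absorb it. Near the edge of the support, where $\phi'$ degenerates, I will instead use the explicit ODE, on which $\phi'$ stays bounded away from $0$ for $s<\ell$ since the right-hand side is at least $g(1)\kappa>0$. At the point $s=\ell$ itself, the claim $\kappa(\ell)=0$ needs care; I will shrink the support slightly to handle it.

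\emph{Main obstacle.} I expect this curvature comparison to be the main difficulty. It amounts to checking that $K*\mathbb 1_{B_\rho}\ge \kappa-C/\rho$ pointwise, and that the loss is compensated uniformly on $s\in(0,\ell)$.

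Fourth, I will start the comparison. By Lemma~\ref{lem:invasion}, at some time $t_1$ the saturated set contains $B_{\rho_0+\ell}$. Hence $u(t_1,\cdot)\ge v(0,\cdot)$, with $\rho_0$ taken large. Proposition~\ref{prop:comp} then gives $u(t_1+t,\cdot)\ge v(t,\cdot)$, so $\{u=1\}\supset B_{\rho_0+ct}$. Finally, replacing $c$ by an intermediate speed $c''\in(c,c^*)$ absorbs the shift $t_1$ and yields \eqref{eq:ballinvasion} for $t\ge T_c$.
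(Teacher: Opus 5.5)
Your upper bound via Corollary~\ref{cor:supp} is fine. The paper compares directly with $\phi_*(x\cdot e-c^*t-M)$ and takes the infimum over $e$, which is the same argument. Your architecture for the lower bound also matches the paper's: the radial profile $\phi_{c^*}(|x|-ct-\rho)$, initialization through Lemma~\ref{lem:invasion}, comparison, then an intermediate speed to absorb the time shift.

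The gap is in the step you flag as the main obstacle. An \emph{additive} curvature estimate $K*{\mathbb 1}_{B_\rho}\ge h-C/\rho$ cannot be absorbed by the slack $(c^*-c)|\phi_*'|$. Write $s=|x|-ct-\rho\in(0,\ell)$ and $\delta=(c^*-c)/c^*$. By \eqref{eq:c}, the subsolution inequality is equivalent to
$$
\big(g(1)-g(\phi_*(s))\big)\big[h(s)-K*{\mathbb 1}_{B_{\rho+ct}}(x)\big]\le \delta\,\big[g(\phi_*(s))(1-h(s))+g(1)h(s)\big].
$$
The right-hand side tends to $0$ as $s\to\ell^-$, because $\phi_*(\ell)=0$, $g(0)=0$ and $h(\ell)=0$; in particular $\phi_*'(\ell^-)=0$. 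So your claim that $\phi'$ stays bounded away from $0$ on $(0,\ell)$, since the ODE's right-hand side is at least $g(1)h$, holds only pointwise, because $h(s)\to 0$. More precisely, near $\ell$ one has $\phi_*(s)=O((\ell-s)h(s))$, so the right-hand side is $\delta g(1)h(s)(1+o(1))$ with $\delta<1$. The additive bound only tells you the left-hand side is at most $g(1)\min\{h(s),C/\rho\}$. For every $\rho$ there are $s<\ell$ close to $\ell$ with $h(s)<C/\rho$, and there the comparison cannot be closed. The difficulty is on a whole left neighbourhood of $\ell$, not at the point $s=\ell$ itself, and ``shrinking the support'' of the fixed profile $\phi_{c^*}$ is not an available operation.

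The missing idea is that the lost mass is \emph{proportional} to $h(s)$. Lemma~\ref{lem:large-ball-halfspace} gives $K*{\mathbb 1}_{B_R}(x)\ge(1-C_d\ell/R)\,h(|x|-R)$ for $|x|\ge R\ge 2\ell$. It is proved by slicing in polar coordinates: on each sphere $|z|=r$, the strip between the half-space $\{z_1>s\}$ and the ball is a fraction $O(\ell/R)$ of the cap. Combined with $-c^*\phi_*'\ge (g(1)-g(\phi_*))h$, this yields
$$\pa_t v\le g(v)+(1-\delta)(g(1)-g(v))h(s)\le g(v)+(g(1)-g(v))K*{\mathbb 1}_{B_{R+ct}}(x)$$
as soon as $C_d\ell/R\le\delta$.

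Alternatively, your truncation idea can be made rigorous, but not with $\phi_{c^*}$. Take $c<c'<c^*$ and let $\psi$ equal $1$ on $(-\infty,0]$ and $(\phi_{c'})_+$ on $[0,\infty)$. By Proposition~\ref{prop:c}, $\psi$ vanishes from some $s_0<\ell$ on. Then $v=\psi(|x|-ct-\rho)$ satisfies $h(s)\ge h(s_0)>0$ on its positivity set, using $\Supp K=B_\ell(0)$. The slack is at least $(1-c/c')g(1)h(s_0)$ uniformly there, so an additive $O(1/\rho)$ error suffices for $\rho$ large; this bound is easy since $K\in L^\infty$. Beyond $s_0$, $v=0$ is trivially a subsolution. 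That route avoids the relative geometric lemma, but your proposal as written commits to $\phi_{c^*}$ and leaves the decisive estimate unresolved.
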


The lower bound for the spreading speed given in \eqref{eq:ballinvasion} is a stronger statement than for the classical KPP reaction-diffusion case where it takes the form
\begin{equation}
    \lim_{t\to\infty}\{ \sup_{|x|\leq ct} |1- u(t,x)|\}=0
        \qtext{for every }c\in[0,c^*). \label{eq:spreading-lower}
\end{equation}

This theorem shows that $c^*$ is the exact spreading speed for solutions having a compactly supported initial condition.
The proof is given in Section \ref{sec:spreading}.
It relies on the construction of a radially symmetric subsolution that propagates with speed close to $c^*$.

\section{A brief review of previous models}\label{sec:compliterature}

With a view to highlight some differences of our approach with respect to previous frameworks, we briefly review here various classes of models for spreading phenomena in biological systems involving reaction-diffusion mechanisms. The aim is to describe broadly their motivations, characteristic features and how the equation we introduce here relates to them. 
Within this perspective, this review lays no claim to being exhaustive.

\subsection{Linear diffusion: Fisher-KPP}
 
Reaction-diffusion equations, most notably the classical \hb{Fisher}-KPP equation \cite{KPP1937,Fisher1937}
\begin{equation}\label{eq:KPP}
\pa_t u - \Delta u = u(1-u),
\end{equation}
have played a central role in the mathematical analysis of invasion phenomena.
The diffusion operator is typically associated to random movement of individuals and the right hand side describes growth inhibited by the competition for resources: The threshold $u=1$  is interpreted as the environment {\it carrying capacity}.

First introduced for the study of geographical spread of genetic traits, this model is a cornerstone of the mathematical study of biological invasions. However, it exhibits several limitations in our setting. In particular, the infinite propagation speed implied by linear diffusion, together with the persistence of spreading even when the reaction term vanishes, indicates that the model captures active motion mechanisms and is thus not well suited for immotile cells which spread as a result of growth.
We also note that solutions of \eqref{eq:KPP} are not expected to be monotone in time, at least initially.

\subsection{Non-linear diffusion}
In dense populations, expansion is primarily driven by competition for space. Darcy's law, $v = -\nabla p$, describes the tendency of individuals to move away from congested regions. When coupled with a $\gamma$-pressure law, this leads to nonlinear degenerate diffusion models of the form
\begin{equation}\label{eq:KPPnl}
\pa_t u - \div (u \na p) = u(1-p), \quad p=u^\gamma.
\end{equation}
The use of such porous medium-type models in biological population dynamics was proposed for example by Gurtin and MacCamy \cite{GurtinMacCamy1977}. We also refer to Sherratt and Murray \cite{SherrattMurray1990} for applications to wound healing, and to Byrne and Drasdo \cite{byrne09} for more recent models of tumor growth.

The pressure $p$ is introduced here on phenomenological grounds to capture the effects of congestion. Notably, proliferation on the right-hand side is now regulated by the pressure: the threshold $p=1$, referred to as the {\it homeostatic pressure}, replaces the notion of carrying capacity, which is typically associated with competition for resources rather than space. 
Equation \eqref{eq:KPPnl} propagates the support of solution with finite speed and is thus better suited to the description of cell population dynamics.
However, as in \eqref{eq:KPP}, invasion can still occur in the absence of growth, and the lack of monotonicity in time remains an issue.

\subsection{Singular pressure and Hele-Shaw free boundary problems}\label{sec:HS}
There is a long history of free boundary problems being used to model cell culture and tumor growth, going back to 
Greenspan \cite{Greenspan,Greenspan1976} (see also Byrne and Chaplain \cite{Byrne1996,ByrneChaplain1997} and references therein).
The  mathematical analysis for such models, which typically account for many other phenomenon  (e.g. the role of surface tension and nutrients), was developed in particular by A. Friedman and his collaborators (see survey \cite{Friedman2015} and references therein).

More recently, there has been a lot of interest for the 
incompressible limit $\gamma\to\infty$ of the nonlinear diffusion model \eqref{eq:KPPnl} following the seminal work of Perthame, Quir\'os and V\'azquez \cite{PQV} - see also for example \cite{MPQ,DP,Perthame2014,Guillen2022}. 
This singular limit leads to a free boundary problem of Hele-Shaw type:
\begin{equation}\label{eq:HS}
\hb{-\Delta p =1-p} \mbox{ in } \{u=1\}, \qquad V = |\na p| \mbox{ on } \pa\{u=1\}
\end{equation}
where $V$ denotes the normal speed of propagation of the set $\{u=1\}$.
Such incompressible limits of the porous medium equation were first studied without a reaction term (see, for instance, \cite{Benilan1989,Gil2001}). A related approach to congestion also appears in models of crowd motion \cite{S_survey,Maury11,M18,MRS,CKY,MRSV,Alexander2014}, where congestion arises from advection rather than growth.

The free boundary problem \eqref{eq:HS} provides a simple geometric description of proliferation-driven growth in incompressible populations. 
It yields solutions that are monotone increasing in time (see \cite{PQV});  in the absence of a growth term, these limiting solutions are stationary, indicating that spatial expansion is a result of growth rather than active motion of the cells.

Nevertheless, these models do not capture all the features of interest. In particular, they do not account for the partitioning of the colony into an inner, reproduction-inhibited (congested and essentially frozen) region and an outer active rim where proliferation and mechanical pushing occur. Yet, this partitioning is usually observed in immotile cell colonies (compare e.g. \cite{BMS2} and the references therein).

\medskip

\subsection{Nonlocal dispersal}
The models discussed above are all local in nature. 
There is also a vast literature that represents dispersal phenomena with the aid of non local integral operators. But they all differ from the models \eqref{eq:main1} and \eqref{eq:main0} in significant ways. 

First, the most natural nonlocal version of a reaction-diffusion equation such as the Fisher-KPP equation \eqref{eq:KPP} is the following non-local dispersion equation:
\begin{equation}\label{eq:nlD}
\pa_t u = f(u) + \int_{\R^d} K(y-x) [u(y) - u(x)]\, dy
\end{equation}
where $f$ denotes the growth term.
It has been studied for instance in \cite{Coville05,BC16,Coville2007}. Many such equations are reviewed in the book \cite{roquejoffre2024frontpropagation}.
 
Our nonlinear integral operator $\L[u]$ describes a process that is very different from the linear dispersion of \eqref{eq:nlD}. In fact, equation~\eqref{eq:nlD} like the Fisher-KPP equation, separate the linear dispersal part and the growth terms. Whereas
here, $\L[u]$  involves  $g(u)$ (rather than $u$). Thus, in the framework of \eqref{eq:main0}   dispersion hinges on growth. Its dependence on the pressure $p$ further highlights the role of competition for space in the dispersion mechanism.

Other models with nonlocal dispersion have been 
used in particular for biological invasions in spatial ecology (Skellam \cite{Skellam1951}, Kot, Lewis and van den Driessche \cite{KotLewisVDD1996}) and epidemiology (Kendall \cite{K}).
This has motivated the mathematical studies of the prototype equation~\cite{Neuhauser2001,PS05}
$$
\pa_t u = -u + \lambda (1-u) K*u.
$$
We also mention the neural field equation \cite{EM93,TLS25} which takes the form:
$$
\pa_t u = -u + K*S(u).
$$
Even though these equations bear some similarities with the model we discuss here, there are also notable differences. For example, these non-local equations, like the local equation~\hb{\eqref{eq:KPP}}, do not lead in general to monotone in time solutions. Note also that congestion and saturation are not taken into account in these models.

\subsection{Nonlocal competition term}
Finally, equations with local dispersion but non-local  production terms have also been used extensively in models of ecological invasion. A typical example is the  non-local \hb{Fisher}-KPP equation \cite{Furter,GVA,PG07}:
\begin{equation}\label{eq:nFKPP}
u_t = u (1-K*u ) + \Delta u
\end{equation} 
studied for instance in \cite{Britton89,Britton,Gourley,BNPR,HR}.

An important and well-documented feature of such equations is that their solutions do not, in general, satisfy the upper bound $u\leq 1$ and may exhibit overshoot. This phenomenon, in turn, can lead (at least in certain regimes) to oscillatory behavior of traveling wave solutions near the front (see \cite{BNPR}).

\smallskip
\subsection{\hb{Comparison with our model}}
Our model combines the Hele-Shaw approximation ($p=0$ when $u<1$) with  nonlocal dispersion phenomena. Like the Hele-Shaw approximation, it leads to monotone-in-time solutions, but it also includes a mesoscopic parameter $\ell$ which accounts for jamming phenomena and plays a key role in the partitioning of the colony proved in Proposition \ref{prop:support}. Like the more classical reaction-diffusion equation, it has traveling wave solutions which characterize the spreading behavior of any cell colony with  compactly supported initial support.

Lastly, the model we propose here presents a notable advantage of simplicity for numerical simulation with respect to most of the models mentioned above.

\bigskip

\section{Singular limit: Proof of Theorem \ref{thm:limit}}\label{sec:limit}

In this section we analyze the passage to the limit $\gamma\to\infty$ in \eqref{eq:gamma} in order to construct a 
solution of \eqref{eq:main0}.

We start by observing that since $u\mapsto P_\gamma(u):=u^\gamma$ is a Lipschitz function (for $\gamma\geq 1$), equation~\eqref{eq:gamma} has a unique solution for all initial data $u^{in}$.
\begin{proposition}\label{prop:existence}
Assume that $K$ and $g$   satisfy respectively \eqref{eq:K1} and \eqref{eq:f1} and let $\gamma\geq 1$.
Let $u^{in} (x)$ be a continuous function such that $0\leq u^{in}(x)\leq 1$ for all $x\in \R^d$. 
There exists a unique solution $u_\gamma(t,x)$  of \eqref{eq:gamma} with $u_\gamma(0,x)=u^{in}(x)$.
This solution satisfies $0\leq u_\gamma(t,x)\leq 1$ for all $x\in \R^d$ and $t>0$ and $t\mapsto u_\gamma(t,x)$ is a non-decreasing function for all $x\in \R^d$.
\end{proposition}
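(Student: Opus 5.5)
We will treat \eqref{eq:gamma} as an ODE in the Banach space $X=L^\infty(\R^d)$, or better $C_b(\R^d)$ since $u^{in}$ is continuous, and apply the Cauchy--Lipschitz (Picard) theorem. Write the right-hand side as $\Phi(u)=\big[g(u)(1-K*p)+K*(g(u)p)\big](1-p)$ with $p=P_\gamma(\tilde u)$. Here $\tilde u$ is the truncation of $u$ to $[0,1]$, which makes $\Phi$ globally defined.

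For $u,v$ with values in $[0,1]$, $g$ is $L$-Lipschitz with $\sup g\le L$, $P_\gamma$ is $\gamma$-Lipschitz on $[0,1]$, and Young's inequality gives $\|K*f\|_\infty\le\|f\|_\infty$ since $\int K=1$. Combining these, $\Phi$ is globally Lipschitz on $X$ with a constant of order $L(1+\gamma)$. It also maps $C_b$ to $C_b$, because convolution with $K\in L^1$ preserves continuity and boundedness. Picard iteration then yields a unique global solution $u\in C^1([0,\infty);X)$ of the truncated problem. In particular $t\mapsto u(t,x)$ is Lipschitz for each $x$.

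Next we show that $0\le u\le1$, so that the truncation is inactive and $u$ solves \eqref{eq:gamma} itself.

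For the upper bound, note that at any point where $u(t,x)\ge1$ we have $p=1$, so the factor $(1-p)$ vanishes. Hence $\Phi(u)(x)$ can be written as $(1-\tilde u(x)^\gamma)\,B(t,x)$ with $B$ bounded. Comparing pointwise with the ODE $\dot w=(1-w^\gamma)B$, whose constant state $w=1$ is a solution, uniqueness of Lipschitz ODE solutions for each fixed $x$ gives $u(t,x)\le1$. The same conclusion can be obtained more cleanly by a Gronwall estimate on $(u-1)_+$.

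For the lower bound and monotonicity, first use $K*p\le1$ (since $p\le1$ and $\int K=1$) together with $g\ge0$ and $p\le1$. These give $\Phi(u)\ge0$, so $\partial_t u\ge0$. Consequently $u(t,x)\ge u^{in}(x)\ge0$, and $t\mapsto u(t,x)$ is non-decreasing.

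Uniqueness for the original equation follows because any solution with values in $[0,1]$ also solves the truncated equation, which has a unique solution.

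The only mildly delicate step is making the a priori bounds rigorous, since the Lipschitz estimate on $P_\gamma$ holds only on $[0,1]$; this is exactly why we truncate first. Uniqueness of the resulting ODE solutions then closes the argument.
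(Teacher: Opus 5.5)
Your proposal is correct and follows essentially the same route as the paper. The paper also extends $g$ and $P_\gamma$ by constants outside $[0,1]$, which is equivalent to your truncation $\tilde u$ (note it must be applied inside $g$ as well, not only inside $P_\gamma$). It then applies Cauchy--Lipschitz--Picard in $C^0_b(\R^d)$ and reads off monotonicity and $0\le u_\gamma\le 1$ from $\partial_t u_\gamma\ge 0$ and the vanishing of the factor $(1-p)$ at $u_\gamma=1$.
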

\begin{proof}
The result follows from Cauchy-Lipschitz-Picard's Theorem. We extend the function $g$  to $\R$ by constants outside $[0,1]$, that is, we set $g(u)=0$ for $u\leq 0$, $g(u)=g(1)$ for $u\geq 1$.  We also modify accordingly the definition of $P_\gamma$ so that $P_\gamma(u)=0$ for $u\leq 0$ and $P_\gamma(u)=1$ for $u\geq 1$.
We can then define the operator $T:C^0_b(\R^d) \to C^0_b(\R^d) $ by 
$$T[u](x) = 
 \Big[ g(u(x)) (1-K*P_\gamma(u)(x))  +K*( g(u)P_\gamma(u))(x)\Big] (1-P_\gamma(u)(x)).$$
It is easy to check that $T$ is a Lipschitz operator on $C^0_b(\R^d)$ (equipped with the sup norm) 
and the existence of a unique, continuously differentiable, solution of~\eqref{eq:gamma} $u:[0,\infty)\to C^0_b(\R^d)$ (with $u(t=0)=u^{in}$) follows.

\medskip

Since $0\leq P_\gamma(u)\leq 1$, we know that $0\leq K\ast  P_\gamma(u)\leq 1$ and so 
\begin{equation}\label{eq:rhspos}
0\leq g(u(x)) (1-K*P_\gamma(u)(x))  +K*( g(u)P_\gamma(u))(x) \leq \sup g\leq L\end{equation}
where $L$ is the Lipschitz norm of $g$ (from assumption~\eqref{eq:f1}).
Thus we have:
$$ 0 \leq \pa_t u \leq \gamma L \,  (1-u).$$
These inequalities, together with the assumption $0\leq u^{in}(x) \leq 1$ yield the bounds $0\leq u(t,x)\leq 1$  and  the monotonicity in time.
\end{proof}

In order to pass to the limit $\gamma\to\infty$, we now derive some estimates on the solution $u_\gamma$, the pressure $p_\gamma:=P_\gamma(u_\gamma)$ and their derivatives independent of $\gamma$.
More precisely, we are going to show that $u_\gamma$ is locally  Lipschitz (in $x$ and $t$) and that $p_\gamma$ is locally bounded in $BV$, the space of functions of bounded variation.
First we prove:
\begin{lemma}[A priori estimate for the density]\label{lem:compact}
Assume that $g$ satisfies \eqref{eq:f1} and that $K$ satisfy \eqref{eq:K1}.
Let $u^{in}$ be a Lipschitz function satisfying $0\leq u^{in}\leq1$.
The solution $u_\gamma$ of \hb{\eqref{eq:gamma}} given by Proposition \ref{prop:existence} satisfies
\begin{equation}\label{eq:tbd} 
|\pa_t u_\gamma(t,x)|\leq   L \qquad \mbox{ a.e. $t>0$ and for all $x\in \R^d$}.
\end{equation}
Furthermore, if $ u^{in} \in \Lip(\R^d)$, then 
\begin{equation}\label{eq:Lpbound}
    \sup_{t\in [0,T]} \|u_\gamma(t,\cdot)\|_{\Lip(\R^d)} \leq  \left ( \| u^{in} \|_{\Lip(\R^d)}  + 2 \hb{TV(K)} \right) e^{L T}.
\end{equation}
\end{lemma}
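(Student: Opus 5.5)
The plan has two parts. The time bound \eqref{eq:tbd} follows directly from the form \eqref{eq:gamma}. The spatial Lipschitz bound \eqref{eq:Lpbound} comes from a pointwise Gronwall argument on spatial increments, in which the stiff factor $(1-u^\gamma)$ is shown to contribute only a dissipative, sign-definite term.

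\textbf{Time bound.} Write $\pa_t u_\gamma = A_\gamma(t,x)\,(1-p_\gamma)$, where
$$A_\gamma = g(u_\gamma)(1-K*p_\gamma)+K*(g(u_\gamma)p_\gamma).$$
By \eqref{eq:rhspos} from the proof of Proposition \ref{prop:existence}, we have $0\le A_\gamma\le L$. Since $0\le u_\gamma\le 1$, also $0\le 1-p_\gamma\le 1$. Hence $0\le \pa_t u_\gamma\le L$, which is \eqref{eq:tbd}. Unlike the bound $\gamma L(1-u)$ used earlier, this bound does not depend on $\gamma$.

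\textbf{Spatial increments.} Fix $h\in\R^d$ and set $w(t,x)=u_\gamma(t,x+h)-u_\gamma(t,x)$. Writing $A^h(t,x)=A_\gamma(t,x+h)$ and similarly for $p$, I split
$$\pa_t w = \big(A^h-A\big)(1-p^h) + A\,\big(p-p^h\big).$$
The second term is where the stiffness sits: $p-p^h=P_\gamma(u)-P_\gamma(u^h)$ has Lipschitz constant $\gamma$. However, $A\ge0$ and $P_\gamma$ is increasing, so this term always has the sign of $-w$. Since $t\mapsto w(t,x)$ is Lipschitz, at a.e.\ $t$ we get
$$\frac{d}{dt}|w|\le |A^h-A| .$$
This sign observation is the key step, and I expect it to be the only real obstacle. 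Any attempt to bound $|p-p^h|$ by $|w|$ would produce constants growing like $\gamma$.

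\textbf{Bounding $|A^h-A|$.} I estimate the three contributions separately.
\begin{itemize}
\item $|g(u^h)-g(u)|\,(1-K*p^h)\le L|w|$.
\item $g(u)\,|K*p^h-K*p|\le L\,\|K(\cdot+h)-K\|_{L^1}\|p\|_\infty\le L\,TV(K)\,|h|$, using the standard $BV$ translation estimate $\|K(\cdot+h)-K\|_{L^1}\le |\na K|(\R^d)\,|h|$.
\item $|K*(g(u)p)^h-K*(g(u)p)|\le L\,TV(K)\,|h|$ in the same way, since $0\le g(u)p\le L$. The translation is moved onto $K$.
\end{itemize}
Together these give $\frac{d}{dt}|w|\le L|w|+2L\,TV(K)|h|$ for each fixed $x$ and a.e.\ $t$. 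Gronwall then gives
$$|w(t,x)|\le e^{Lt}|w(0,x)| + 2TV(K)|h|\,(e^{Lt}-1) \le \big(\|u^{in}\|_{\Lip}+2TV(K)\big)e^{LT}|h|$$
for $t\le T$. Dividing by $|h|$ and taking the supremum over $x$ and $h$ yields \eqref{eq:Lpbound}.
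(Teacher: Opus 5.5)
Your proposal is correct and is essentially the paper's proof. The time bound comes from \eqref{eq:rhspos}. For the spatial bound the paper uses the same splitting with the roles of $x$ and $y$ mirrored, drops the stiff term via $\sign(p(x)-p(y))=\sign(u(x)-u(y))$, bounds each nonlocal difference by $L\,TV(K)|x-y|$ using \eqref{eq:lipK}, and concludes with Gronwall; your Gronwall step is marginally sharper ($e^{Lt}-1$ instead of $e^{Lt}$ on the $TV(K)$ term).
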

Note that the bound \eqref{eq:Lpbound} is not uniform in  $T$ and may suggest that the Lipschitz regularity of  $u_\gamma$
gets worse as time increases. This bound is not optimal. In fact we expect that even discontinuous initial data give rise to solutions that become uniformly Lipschitz in space after an initial boundary layer in time. 
\begin{proof}
The bound \eqref{eq:tbd} follows immediately from   \eqref{eq:rhspos}.

In the following, we drop the dependence of $u$ and $p$ on $\gamma$ to keep notations simple. 
For $x,y\in \R^d$, given we write:
\begin{align*}
\pa_t[ u(t,x)-u (t,y) ] & =
[g(u (t,x))-g(u (t,y)) ] (1-K\ast p(t,x))(1-p (t,x)) \\
& \quad  +[K \ast (g(u)p) (t,x) - K \ast (g(u)p) (t,y) ] (1-p (t,x)) \\
& \quad -g(u (t,y)\hb{)}[K \ast p (t,x) - K \ast p (t,y)] (1-p (t,x)) \\
& \quad - [ g(u (t,y))(1- K\ast  p (t,y) ) + K\ast (g(u) p) (t,y)] (p (t,x)-p (t,y)) .
\end{align*}
Multiplying by $\sign(u(t,x)-u(t,y))$ and noticing that $\sign(p(t,x)-p(t,y))=\sign(u(t,x)-u(t,y))$, we get:
\begin{align*}
\pa_t| u(t,x)-u (t,y) | & \leq 
L |u (t,x)-u (t,y)| (1-K\ast p(t,x))(1-p (t,x)) \\
& \quad  +|K \ast (g(u)p) (t,x) - K \ast (g(u)p) (t,y) | (1-p (t,x)) \\
& \quad + g(u (t,y)\hb{)}|K \ast p (t,x) - K \ast p (t,y)| (1-p (t,x)) \\
& \quad - [ g(u (t,y))(1- K\ast  p (t,y) ) + K\ast (g(u) p) (t,y)] |p (t,x)-p (t,y)|.
\end{align*}
Using \eqref{eq:rhspos}, we infer
\begin{align*}
\pa_t| u(t,x)-u (t,y) | 
&  \leq L   |u (t,x)-u (t,y)| +  L |K \ast p (t,x) - K \ast p (t,y)| \\
& \qquad +|K \ast (g(u)p) (t,x) - K \ast (g(u)p) (t,y) | .
\end{align*}
The classical inequality\footnote{Indeed, for a smooth kernel $K$, we can write
\begin{align*}
  |  K \ast w (x+h) - K \ast w (x)| & = \left|\int_{\R^d} (K(x+h-z) -K(x-z) )w(z)\, dz \right| \\
    & = \left| \int_{\R^d} \int_0^1 \na K (x+s h-z)\cdot h w(z)\,ds\, dz\right|\\
    & \leq \|w\|_{L^\infty} |h|  \int_{\R^d} \int_0^1 |\na K (x+s h-z)|\,ds\, dz  \leq \|w\|_{L^\infty} |h|  \int_{\R^d}   |\na K (z)|  dz
\end{align*}
and   \eqref{eq:lipK} follows by an approximation argument.
}
\begin{equation}\label{eq:lipK}
|K \ast w(x) - K \ast w(y)|\leq \|w\|_{L^\infty} \hb{TV(K)} 
  |x-y| 
\end{equation}
then implies
$$
\pa_t| u(t,x)-u (t,y) | \leq L |u (t,x)-u (t,y)| + 2L \hb{TV(K)} |x-y|.
$$
Integrating this differential inequality we find (for all $t>0$)
\begin{align*}
| u_\gamma(t,x)-u_\gamma(t,y) | 
& \leq  \left ( |u^{in}(x)-u^{in}(y)|  + 2\hb{TV(K)}  |x-y| \right) \hb{e^{L t}}\\
& \leq  \left ( \|u^{in}\|_{\Lip(\R^d)}   +2 \hb{TV(K)}   \right) \hb{e^{L t}}|x-y|
\end{align*}
and the result follows.
\end{proof}

\begin{lemma}[A priori estimate for the pressure]\label{lem:pressurecompact}
Under the conditions of Lemma \ref{lem:compact},
the pressure $p_\gamma =P_\gamma(u_\gamma)$ satisfies
$$
\int_0^T\int_{B_R} |\pa_t p_\gamma | + |\na_x p_\gamma|\, dx \, dt \leq |B_R|+
 2 \left(  T + \frac{4d}{r} \right) e^{LT} |B_R|  \Big[\|u^{in}\|_{\Lip} +2 \hb{TV(K)} \Big] 
\qquad \mbox{for all $\gamma\geq 3$}
$$
for all $T>0$ and $B_R\subset \R^d$,
 where   $r$ is given by \eqref{eq:f1}.
\end{lemma}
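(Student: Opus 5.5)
We bound the two parts of the total variation separately and drop the index $\gamma$ throughout. The time derivative is the easy part. By Proposition~\ref{prop:existence}, $t\mapsto u(t,x)$ is non-decreasing, and so is $p=u^\gamma$. Since $0\le p\le 1$, this gives $\int_0^T|\pa_t p|\,dt = p(T,x)-p(0,x)\le 1$ for every $x$. Integrating over $B_R$ produces the term $|B_R|$ in the statement.

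For the spatial derivative we write $\na p=\gamma u^{\gamma-1}\na u$. Lemma~\ref{lem:compact} gives $|\na u|\le M_T:=(\|u^{in}\|_{\Lip}+2TV(K))e^{LT}$ on $[0,T]$. It therefore suffices to show
\[
\int_0^T \gamma u^{\gamma-1}(t,x)\,dt \le 2\Big(T+\frac{4d}{r}\Big)
\]
uniformly in $x$, or at least after integration over $B_R$. The natural tool is the identity $\pa_t p=\gamma u^{\gamma-1}\pa_t u$, since $\int_0^T\pa_t p\,dt\le1$. What we need is then a lower bound on $\pa_t u$ wherever $\gamma u^{\gamma-1}$ is large, so that $\gamma u^{\gamma-1}\le \pa_t p/\pa_t u$.

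We write $\pa_t u=A(1-p)$ with $A=g(u)(1-K*p)+K*(g(u)p)$. Using $g(u)\ge ru$, $up\ge p^2$ (because $u\ge p$) and Jensen's inequality $K*p^2\ge (K*p)^2$, we get, with $m=K*p\in[0,1]$,
\[
A\ge r\big[u(1-m)+m^2\big]\ge r\Big(u-\frac{u^2}{4}\Big)\ge \frac{3r}{4}u .
\]
We then split the time interval according to the size of $u$ and $p$:
\begin{itemize}
\item On $\{u\le 1-2/\gamma\}$ (roughly), $\gamma u^{\gamma-1}$ is bounded by a constant, and integrating it in time contributes $O(T)$.
\item On $\{u\ge 1/2,\ p\le 1/2\}$, we have $\pa_t u\ge \tfrac{3r}{16}$, hence $\gamma u^{\gamma-1}\le \tfrac{16}{3r}\pa_t p$, and the time integral is bounded by $O(1/r)$.
\end{itemize}
The restriction $\gamma\ge3$ should enter only in making these elementary constants explicit.

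The main obstacle is the top layer $\{p>1/2\}$. There the lower bound on $\pa_t u$ degenerates like $1-p$, and $\int \pa_t p/(1-p)\,dt$ is only logarithmically controlled, so the pointwise-in-$x$ argument fails. We will argue in a genuinely $BV$ way instead. On this set we have $|\na p|=|\na(p-\tfrac12)_+|$, and we plan to estimate $\int_{B_R}|\na (p-\tfrac12)_+|$ through the coarea formula, $\int_{1/2}^1 \mathcal H^{d-1}(\{p=s\}\cap B_R)\,ds$. The level sets $\{u=s^{1/\gamma}\}$ move outward monotonically in time. Their normal speed is $\pa_t u/|\na u|\ge (3r/8)(1-p)/M_T$, so the swept volume over $[0,T]$, which is at most $|B_R|$, controls the time integral of their perimeters weighted by that speed. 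We expect the $4d/r$ constant to arise here, from comparing perimeter with volume on $B_R$ (the factor $d$ coming from $|\pa B_R|=\frac dR|B_R|$ or a divergence-theorem argument with the field $x/R$), combined with the speed lower bound $\sim r$.

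If the weighting by $1-p$ cannot be removed this way, we would fall back on testing $\pa_t u=A(1-p)$ against $(1-p)^{-1}$-type truncations. Alternatively, we would use that on $\{u=1\}$ nearly saturated points $\pa_t u$ is small only where $K*p\approx1$, where $u\approx 1$ already, so that the contribution of this region to $|\na p|$ is concentrated on a set whose measure shrinks with $\gamma$.
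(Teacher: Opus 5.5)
\textbf{Verdict: there is a genuine gap.} The top layer $\{p_\gamma>\frac{1}{2}\}$ is never controlled, and the reduction you make before reaching it cannot be repaired.

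\emph{Why the reduction fails.} Bounding $|\na p|\le \gamma u^{\gamma-1}M_T$ throws away the only information that makes the lemma true, namely that $\na u_\gamma$ becomes small where $p_\gamma\approx 1$. The statement you say suffices is in fact false. If $u^{in}=1$ on a ball $B_\rho\subset B_R$, then $u_\gamma\equiv 1$ there for all $t$, by monotonicity and $u\le 1$. Hence $\int_0^T\int_{B_R}\gamma u_\gamma^{\gamma-1}\,dx\,dt\ge \gamma T|B_\rho|\to\infty$. The same happens generically on the limiting saturated set, where $p_\gamma\to 1$.

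\emph{Why the coarea fallback fails.} It inherits the same defect. With $|\na u|\le M_T$, the normal speed on $\{p=s\}$ is only $\gtrsim r(1-s)/M_T$. This yields $\int_0^T\mathcal H^{d-1}(\{p=s\}\cap B_R)\,dt\lesssim M_T|B_R|/(r(1-s))$, which is not integrable at $s=1$. Removing the weight would need something like $|\na u|\lesssim 1-p$, which you have no means to prove, and the remaining fallbacks are not arguments.

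\emph{Minor point.} On $\{u=1-2/\gamma\}$ one has $\gamma u^{\gamma-1}\approx \gamma e^{-2}$, so your first bullet should use $\{u\le \frac{1}{2}\}$ instead. There $\gamma u^{\gamma-1}\le \gamma 2^{1-\gamma}\le 2$.

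\emph{The missing idea} is to differentiate the equation in $x_i$ instead of applying the chain rule to $p$. Write $\pa_t u=A(1-p)$. Differentiating the factor $1-p$ produces $-A\,\pa_{x_i}p$. Since $\pa_{x_i}p=\gamma u^{\gamma-1}\pa_{x_i}u$ has the sign of $\pa_{x_i}u$, multiply by $\sign(\pa_{x_i}u)$ and use $\|\pa_{x_i}K*w\|_{L^\infty}\le TV(K)\|w\|_{L^\infty}$. This gives
\[
\pa_t|\pa_{x_i}u|+A\,|\pa_{x_i}p|\le L\big(|\pa_{x_i}u|+2\,TV(K)\big).
\]
The term $A|\pa_{x_i}p|=A\gamma u^{\gamma-1}|\pa_{x_i}u|$ damps the gradient at a rate of order $\gamma$ near saturation. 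This is exactly the mechanism your approach loses.

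Your lower bound $A\ge \frac{3r}{4}u$ is correct and valid for all $\gamma\ge 1$. It is sharper than the $\frac{r}{4}u$ of Lemma~\ref{lem:tech} used in the paper. Combined with Gronwall in time, it gives, pointwise in $x$,
\[
\frac{3r}{4}\int_0^T u|\pa_{x_i}p|\,dt\le e^{LT}\big(\|u^{in}\|_{\Lip}+2\,TV(K)\big).
\]
So a pointwise-in-$x$ argument does work, just not along your route. One concludes using $|\na p|\le 2u|\na p|$ on $\{u\ge\frac{1}{2}\}$ and $|\na p|\le 2|\na u|$ on $\{u\le \frac{1}{2}\}$. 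The factor $d$ in $4d/r$ comes from summing over $i$, not from a perimeter-to-volume ratio. Your treatment of $\pa_t p$ coincides with the paper's.
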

We note that this bound in $BV$ is optimal since we expect the pressure $p_\gamma$ to converge to a characteristic function when $\gamma\to\infty$.
\begin{proof}[Proof of Lemma \ref{lem:pressurecompact}]
First, we use the monotonicity in time to derive a bound on the time derivative. Since $\pa_t p_\gamma = P'_\gamma(u_\gamma)\pa_t u_\gamma \geq0$, we can write
$$ \int_0^T\int_{B_R} |\pa_t p_\gamma |\, dx \, dt = 
\int_0^T\int_{B_R} \pa_t p_\gamma \, dx \, dt \leq \int_{B_R}  p_\gamma (T,x)\, dx \leq |B_R|.$$

By Lemma \ref{lem:compact}, we know that $u_\gamma$ is a Lipschitz function. Therefore, we can differentiate \eqref{eq:gamma}  with respect to one of the coordinates $x_i$  to get
\begin{align*}
\pa_t(\pa_{x_i} u_\gamma ) & =
g'(u_\gamma) \pa_{x_i} u_\gamma (1-K\ast p_\gamma)(1-p_\gamma)
+ \big[-g(u_\gamma)  \pa_{x_i}K \ast p_\gamma+\pa_{x_i}K \ast (g(u_\gamma)p_\gamma)   \big]  (1-p_\gamma) \\
& \quad - \big[ g(u_\gamma)(1- K\ast  p_\gamma ) + K\ast (g(u_\gamma)  p_\gamma)\big] \pa_{x_i} p_\gamma .
\end{align*}
Multiplying by $\sign(\pa_{x_i} u_\gamma)$ and noticing that $\pa_{x_i }u_\gamma$ and $\pa_{x_i }p_\gamma$ have the same sign,
we deduce:
\begin{align}
\pa_t|\pa_{x_i} u_\gamma | & =
g'(u_\gamma) |\pa_{x_i} u_\gamma | (1-K\ast p_\gamma)(1-p_\gamma) \nonumber \\
& \quad + \big[-g(u_\gamma)  \pa_{x_i}K \ast p_\gamma+\pa_{x_i}K \ast (g(u_\gamma)p_\gamma)   \big]   \sign(\pa_{x_i} u_\gamma)  (1-p_\gamma)\nonumber  \\
& \quad - \big[ g(u_\gamma)(1- K\ast  p_\gamma ) + K\ast  (g(u_\gamma) p_\gamma)\big] |\pa_{x_i} p_\gamma| \nonumber \\
& \leq L  \left(|\pa_{x_i} u_\gamma | + 2 \int |\pa_{x_i} K(x)| \right) - \big[ g(u_\gamma)(1- K\ast  p_\gamma ) + K\ast (g(u_\gamma) p_\gamma)\big] |\pa_{x_i} p_\gamma| .\label{eq:gjhkf}
\end{align}
In order to conclude we make use of the following technical result:
\begin{lemma}\label{lem:tech}
Assuming that $u\mapsto g(u)$ satisfies  \eqref{eq:f1} and that $P_\gamma(u)=u^\gamma$ with $\gamma\geq 3$, we have
\begin{equation}\label{eq:claim}
g(u(x))(1- K\ast  P_\gamma(u)(x) ) + K\ast (g(u) P_\gamma(u))(x) \geq \frac r 4 u(x) , \qquad \forall x\in \R^d
\end{equation}
for any function $u(x)\in[0,1]$. 
\end{lemma}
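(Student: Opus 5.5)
The plan is to split according to the size of $a(x):=K\ast P_\gamma(u)(x)\in[0,1]$. This quantity lies in $[0,1]$ because $0\le u^\gamma\le 1$ and $K$ is a probability density by \eqref{eq:K1}. Both terms on the left-hand side of \eqref{eq:claim} are nonnegative, since $g\ge0$ and $a\le 1$. It therefore suffices to show that, in each case, one of the two terms alone is at least $\frac r4 u(x)$.

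\emph{Case $a(x)\le \frac12$.} Here the local term does the work. By \eqref{eq:f1},
$$g(u(x))\,(1-a(x))\ \ge\ \tfrac12\, g(u(x))\ \ge\ \tfrac r2\, u(x)\ \ge\ \tfrac r4\, u(x).$$
This uses $g(u)\ge ru$ on $(0,1]$, which also holds trivially at $u=0$ since $g(0)=0$.

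\emph{Case $a(x)> \frac12$.} Here the nonlocal term does the work. Pointwise, $g(u)\,u^\gamma\ge r\,u^{\gamma+1}$, so
$$K\ast\big(g(u)P_\gamma(u)\big)(x)\ \ge\ r\, K\ast \big(u^{\gamma+1}\big)(x).$$
Write $u^{\gamma+1}=(u^\gamma)^{(\gamma+1)/\gamma}$. The map $s\mapsto s^{(\gamma+1)/\gamma}$ is convex on $[0,1]$, and $K(x-\cdot)\,dy$ is a probability measure. Jensen's inequality therefore gives
$$K\ast(u^{\gamma+1})(x)\ \ge\ \big(K\ast u^\gamma(x)\big)^{1+1/\gamma}\ =\ a(x)^{1+1/\gamma}.$$
Since $\gamma\ge3$, the exponent $1+1/\gamma\le 4/3$. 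Using $a>\frac12$,
$$a(x)^{1+1/\gamma}\ \ge\ \left(\tfrac12\right)^{4/3}\ >\ \tfrac14\ \ge\ \tfrac14\,u(x),$$
where the last inequality uses $u\le 1$. Thus the nonlocal term alone is at least $\frac r4 u(x)$.

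Combining the two cases gives \eqref{eq:claim}. The only step needing care is the second case, where the lower bound must come from the convolution term and hence cannot involve $u(x)$ directly. The Jensen step is what converts the information $a(x)>\frac12$ into a lower bound on $K\ast(u^{\gamma+1})(x)$, and the crude bound $u(x)\le1$ then suffices. The restriction $\gamma\ge3$ is used only to keep the constant $(1/2)^{1+1/\gamma}$ above $1/4$.
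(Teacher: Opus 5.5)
Your proof is correct, and it follows a genuinely different route from the paper's.

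\emph{The paper's argument.} It first uses $g(u)\ge ru$ to reduce to bounding $u(1-K*p)+K*(up)$ with $p=u^\gamma$. It then splits the integration variable $y$ according to whether $p(y)\ge\frac12$. On $\{p\ge\frac12\}$ one has $u(y)\ge(\frac12)^{1/\gamma}$, so the gain term dominates $(\frac12)^{1/\gamma}K*(p{\mathbb 1}_{\{p\ge 1/2\}})$. The part where $p<\frac12$ costs at most $\frac u2$. A convex-combination argument then gives the single pointwise bound $\min\big(u,(\frac12)^{1/\gamma}\big)-\frac u2$. This is at least $\frac u4$ provided $(\frac12)^{1/\gamma}\ge\frac34$, which is where $\gamma\ge3$ enters.

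\emph{Your argument.} You split in $x$ instead, according to the size of $a(x)=K*P_\gamma(u)(x)$. In the saturated case, Jensen's inequality for the probability measure $K(x-\cdot)\,dy$ and the convex map $s\mapsto s^{1+1/\gamma}$ does the work.

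\emph{Comparison.} Your route is shorter and avoids decomposing the convolution. It is also stronger than you state. Since $1+1/\gamma\le 2$ and $a(x)>\frac12$, you get $a(x)^{1+1/\gamma}>\frac14$ for every $\gamma\ge1$, so your proof does not need $\gamma\ge3$ at all. Your closing remark that $\gamma\ge3$ is what keeps the constant above $\frac14$ is therefore slightly off. The paper's version gives one formula valid at every point, with no case distinction. It also makes the mechanism visible: mass gained from nearly saturated neighbours compensates the growth inhibition, matching the modelling interpretation.
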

Postponing the proof of this lemma, we see that using 
Inequality \eqref{eq:claim} in \eqref{eq:gjhkf} yields:
$$
\pa_t|\pa_{x_i} u_\gamma |
+ \frac{r}{4} u_\gamma |\pa_{x_i} p_\gamma|
 \leq L (|\pa_{x_i} u_\gamma | + 2 \hb{TV(K)} ).
$$
This differential inequality implies that
\begin{align*}\frac{r}{4} \int_0^T \int_{B_R} u_\gamma (t,x) |\pa_{x_i} p_\gamma (t,x)|\, dx\, dt 
& \leq \int_{B_R}  [|\pa_{x_i} u^{in}(x)| +2 \hb{TV(K)} ] \, dx \, e^{LT}\\
& \leq |B_R| \Big[ \hb{\|u^{in}\|_{\Lip}} +2 \hb{TV(K)} \Big] \, e^{LT}.
\end{align*}
Since this holds for all $i=1,\dots,d$, we obtain the following bound:
$$
\frac{r}{4} \int_0^T \int_{B_R} u_\gamma (t,x) |\na_x p _\gamma(t,x)|\, dx\, dt \leq |B_R|  \Big[ \sqrt d \hb{\|u^{in}\|_{\Lip}} +2 d \hb{TV(K)} \Big] e^{LT}.
$$
Finally, using
$$|\na  P_\gamma(u)| \leq
\begin{cases}
\gamma \left(\frac1 2 \right)^{\gamma-1} |\na u |
& \mbox{ if } u \leq \frac 1 2\\ 
2 u |\na P_\gamma(u)|
& \mbox{ if } u \geq 1/2 
\end{cases}
$$
and  $\gamma \left(\frac1 2 \right)^{\gamma-1}\leq 2$ for $\gamma\geq 1$ we get: 
\begin{align*}
\int_0^T \int_{B_R}   |\na_x p_\gamma (t,x)|\, dx\, dt 
&\leq 2 \int_0^T \int_{B_R} u_\gamma (t,x) |\na_x p_\gamma (t,x)|\, dx\, dt + 
2 \int_0^T \int_{B_R}    |\na_x u_\gamma(t,x) |\, dx\, dt\\
& \leq \frac{8e^{LT}}{r} |B_R| d \Big[\hb{\|u^{in}\|_{\Lip}} + 2\hb{TV(K)} \Big] + 
2 T |B_R| \|\na_x  u_\gamma \|_{L^\infty ([0,T]\times \R^d)}
\end{align*}
and the result follows using \eqref{eq:Lpbound}.
\end{proof}

\begin{proof}[Proof of Lemma \ref{lem:tech}]
The fact that $P_\gamma(u)\in [0,1]$, together with assumption \eqref{eq:f1}, yield:
$$
g(u(x))(1- K\ast  P_\gamma(u)(x) ) + K\ast (g(u) P_\gamma(u))(x) \geq   
r [u(x) (1- K\ast  P_\gamma(u)(x) ) + K\ast (u P_\gamma(u))(x)].
$$
Next, we note that $p(x)\geq \frac 1 2$ implies $u(x) \geq \left(\frac{1}{2}\right)^{1/\gamma}$,
so that we can write:
\begin{align*}
u (1- K\ast p) + K\ast (u p)
& \geq 
u \big[1- K\ast( p \mathbb 1_{p\geq \frac 1 2} )-K\ast( p \mathbb 1_{p< \frac 1 2}) \big] + K\ast (u p \mathbb 1_{p\geq \frac 1 2})\\
& \geq 
u \big[1- K\ast( p \mathbb 1_{p\geq \frac 1 2} )\big]- u K\ast( p \mathbb 1_{p< \frac 1 2}) ) +  \left(\frac{1}{2}\right)^{1/\gamma}K\ast (p \mathbb 1_{p\geq \frac 1 2}) \\
& \geq 
u \big[1- K\ast( p \mathbb 1_{p\geq \frac 1 2} )\big]+  \left(\frac{1}{2}\right)^{1/\gamma}K\ast (p \mathbb 1_{p\geq \frac 1 2}) - \frac1 2 u 
\end{align*}
where the last line follows from the fact that $K\ast( p \mathbb 1_{p< \frac 1 2})\leq K\ast \frac 1 2 = \frac 1 2$.
Since $K\ast( p \mathbb 1_{p\geq \frac 1 2} )\in [0,1]$, the convex combination $u (1- K\ast( p \mathbb 1_{p\geq \frac 1 2} )) +  \left(\frac{1}{2}\right)^{1/\gamma}K\ast (p \mathbb 1_{p\geq \frac 1 2})$ is \hb{greater} than $\min\left(u,\left(\frac{1}{2}\right)^{1/\gamma}\right) $, hence
\begin{align*}
u (1- K\ast p ) + K\ast (u p)
 & \geq \min\left(u,\left(\frac{1}{2}\right)^{1/\gamma}\right)  - \frac1 2 u.
\end{align*}
In order to conclude, it remains to notice that for $s\in [0,1]$, we have $\min\left(s,\left(\frac{1}{2}\right)^{1/\gamma}\right) \geq \frac 3 4 s$ as long as $\left(\frac{1}{2}\right)^{1/\gamma}\geq \frac 3 4 $, which holds for $\gamma$ large enough (e.g. $\gamma\geq 3$).
\end{proof}

We now have everything we need to pass to the limit $\gamma\to\infty$ and prove the existence of a solution of \eqref{eq:main0}.

\begin{proof}[Proof of Theorem \ref{thm:limit}]
By Lemma \ref{lem:compact} and Lemma \ref{lem:pressurecompact} we can extract a sequence $\gamma_j\to\infty$ (we \hb{still} denote it by $\gamma$ for simplicity) along which $u_\gamma(t,x)$ converges to $u_\infty(t,x)$ locally uniformly
and $p_\gamma(t,x)$ converges to $p_\infty(t,x)$ strongly in $L^1_{loc}([0,\infty)\times\R^d) $ and almost everywhere.
We note that $u_\infty$ is Lipschitz in $x$ and $t$ and $p_\infty$ is in $\mathrm{BV}_{loc}(\R_+\times\R^d)$.

We can thus pass to the limit in the products $g(u_\gamma) p_\gamma$ in the equation \eqref{eq:gamma} to show that the limit $(u_\infty,p_\infty)$ solves
$$
\pa_t u_\infty  =\big[ g(u_\infty)(1-K\ast  p_\infty) +   K\ast (g(u_\infty) p_\infty)\big](1-p_\infty) \quad \mbox{ a.e. in } \R_+\times\R^d.
$$
Furthermore, we know that $p_\gamma(t,x)$ converges to $0$ pointwise in  $\{u_\infty(t,x)<1\}$ (in fact the convergence is locally uniform in that open set) and thus $p_\infty(t,x) = 0 $ a.e. in $\{u_\infty(t,x)<1\}$.
In particular we have $g(u_\infty) p_\infty = g(1)p_\infty$ a.e. in  $\R_+\times\R^d$, which yields
\begin{equation}\label{eq:limit0}
\pa_t u_\infty =\big[ g(u_\infty)(1-K\ast  p_\infty) +  g(1) K\ast  p_\infty\big](1-p_\infty)  \quad \mbox{ a.e. in } \R_+\times\R^d.
\end{equation}
Finally, using the fact that $\pa_t u_\infty = 0$ a.e. in $\{u_\infty=1\}$ (since $u_\infty \in W^{1,\infty}(0,T;L^\infty(\R^d))$), Equation \eqref{eq:limit0} yields
$$ 0  = g(1)(1-p_\infty) \quad \mbox{ a.e. in } \{u_\infty=1\}$$
hence $p_\infty ={\mathbb 1}_{\{u_\infty =1\}}$  a.e. in $\R_+\times\R^d$.
\medskip

It remains to show that $t\mapsto u_\infty(t,x)$ solves \eqref{eq:main0} for all $x\in \R^d$ (see Definition \ref{def:sol}).
We thus fix $x_0\in \R^d$. Since $K\in L^1(\R^d)$, we have $K\ast p_\infty(t,x_0)=K\ast {\mathbb 1}_{\{u_\infty =1\}} (t,x_0)$ a.e. $t>0$ and the following convergence holds:
$$K\ast p_\gamma(t,x_0)\to K\ast {\mathbb 1}_{\{u_\infty =1\}}(t,x_0) \quad\mbox{ a.e. } t>0. $$
It should be noted that this limit holds pointwise in $x_0$ and not just a.e.
Next, the monotonicity and continuity of $t\mapsto u_\infty(t,x_0)$  (also pointwise in $x_0$) implies the existence of $t_0(x_0)\in [0,\infty]$ such that $u_\infty(t,x_0)<1$ for $t<t_0(x_0)$ and $u_\infty(t,x_0)=1$ for $t\geq t_0(x_0)$.
 Since $p_\gamma(t,x_0)$ converges to $0$ (locally uniformly) for $t\in (0,t_0(x_0))$, we can pass to the limit in \eqref{eq:gamma} to get
$$
\pa_t u_\infty (t,x_0) =[ g(u_\infty) (1-K\ast  {\mathbb 1}_{\{u_\infty =1\}}) + g(1)  K\ast  {\mathbb 1}_{\{u_\infty =1\}} ](t,x_0) \qquad \mbox{ a.e. } t\in (0,t_0(x_0)).
$$
Finally, we obviously have $\pa_t u_\infty (t,x_0) = 0 $ for all $t>t_0(x_0)$, and so
$$
\pa_t u_\infty   =[ g(u_\infty) (1-K\ast  {\mathbb 1}_{\{u_\infty =1\}}) + g(1)  K\ast  {\mathbb 1}_{\{u_\infty =1\}}](1-{\mathbb 1}_{\{u_\infty =1\}} ) \qquad 
\mbox{ a.e. } t>0.
$$
This equality holds for every $x\in \R^d$, so $u_\infty(t,x)$ is a solution of \eqref{eq:main0} in the sense of Definition~\ref{def:sol}.
\end{proof}

\section{Comparison principle and uniqueness}\label{sec:comparison}
In the previous section, we showed that given a Lipschitz initial condition,
\eqref{eq:main0} has a solution that is Lipschitz in both $x$ and $t$.
In order to prove uniqueness, we first prove that any solution must be 
Lipschitz:
\begin{lemma}\label{lem:Lip}
    Assume that $g$ satisfies \eqref{eq:f1} and that $K$ satisfies \eqref{eq:K1}.
    Let $u(t,x)$ be a solution of \eqref{eq:main0} in the sense of Definition \ref{def:sol} with initial state $u^{in} \in \mathrm {Lip} (\R^d)$. Then $u \in \mathrm {Lip} ([0,T]\times \R^d)$ for all $T>0$. More precisely,  the following pointwise estimate holds:
$$
|u(t,x)-u(t,y)| \leq e^{Lt} \Big[ |u^{in}(x)-u^{in}(y)| + \hb{TV(K)}  |x-y|\Big].
$$
\end{lemma}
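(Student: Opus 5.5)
Fix $x,y\in\R^d$ and set $w(t)=|u(t,x)-u(t,y)|$. I would derive a differential inequality for $w$ directly from \eqref{eq:main0}, following the computation in Lemma \ref{lem:compact}, and then conclude with Gronwall. Write $\chi=\mathbb 1_{\{u=1\}}$ and
$$A(t,z)=g(u(t,z))(1-K*\chi(t,z))+g(1)K*\chi(t,z).$$
Then $\pa_t u(t,z)=A(t,z)\mathbb 1_{\{u(t,z)<1\}}$ for a.e.\ $t$. Both $t\mapsto u(t,x)$ and $t\mapsto u(t,y)$ are Lipschitz, so $w$ is Lipschitz and differentiable a.e. It therefore suffices to bound $\pa_t w$ at a.e.\ $t$ where $u(t,x)\neq u(t,y)$. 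On $\{w=0\}$ we have $\pa_t w=0$ a.e.

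\textbf{Step 1 (case split).} Assume without loss of generality $u(t,x)>u(t,y)$, so $u(t,y)<1$.
\begin{itemize}
\item If $u(t,x)=1$, then $\pa_t u(t,x)=0$ and $\pa_t w=-\pa_t u(t,y)\le 0$, because $A\ge0$. This uses $g\ge0$ and $K*\chi\in[0,1]$.
\item If $u(t,x)<1$, then $\pa_t w=A(t,x)-A(t,y)$, which I expand as
$$A(t,x)-A(t,y)=[g(u(x))-g(u(y))](1-K*\chi(x))+\big(g(1)-g(u(y))\big)\big[K*\chi(x)-K*\chi(y)\big].$$
\end{itemize}
The first term is bounded by $L\,w$. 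For the second term, $|g(1)-g(u(y))|\le L$ by \eqref{eq:f1}, since $g$ takes values in $[0,L]$ (or directly from the Lipschitz bound). The convolution estimate \eqref{eq:lipK} with $\|\chi\|_{L^\infty}\le1$ gives $|K*\chi(x)-K*\chi(y)|\le TV(K)|x-y|$. No regularity of the saturated set is needed here, which is the key point: \eqref{eq:lipK} only requires $\chi$ bounded and measurable.

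\textbf{Step 2 (Gronwall).} Combining the cases gives, for a.e.\ $t$,
$$\pa_t w\le L\,w+L\,TV(K)|x-y|.$$
Integrating yields
$$w(t)\le e^{Lt}w(0)+(e^{Lt}-1)TV(K)|x-y|\le e^{Lt}\big[|u^{in}(x)-u^{in}(y)|+TV(K)|x-y|\big],$$
which is the stated pointwise estimate. Spatial Lipschitz continuity on $[0,T]$ follows with constant $e^{LT}(\|u^{in}\|_{\Lip}+TV(K))$. Time Lipschitz continuity follows from $0\le\pa_t u\le\sup A\le \max(\sup g,g(1))\le L$. Together these give $u\in\Lip([0,T]\times\R^d)$.

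\textbf{Expected obstacle.} The delicate point is justifying the a.e.\ differentiation with the discontinuous factor $\mathbb 1_{\{u<1\}}$. It is handled by working pointwise in $t$ for fixed $x,y$, since Definition \ref{def:sol} gives the equation a.e.\ in $t$ for each $x$. One also needs $t\mapsto K*\chi(t,z)$ to be measurable, which holds because $\chi$ is monotone in $t$ (as $u$ is nondecreasing in $t$).
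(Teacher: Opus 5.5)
Your proof is correct and follows the paper's argument. The paper packages your case split into the single inequality $\sign(u-v)\,\pa_t u \le [g(u)(1-K*\mathbb 1_{\{u=1\}})+g(1)K*\mathbb 1_{\{u=1\}}]\,\sign(u-v)$, valid for any $v\le 1$ (equality when $u<1$, trivial when $u=1$), applies it symmetrically with $v(t,x)=u(t,x+h)$, and then concludes with \eqref{eq:lipK} and Gronwall exactly as you do; the only differences are cosmetic (your grouping of $A(x)-A(y)$ and your explicit handling of a.e.\ differentiability, which the paper leaves implicit).
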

\begin{proof}
The proof follows closely that of Lemma \ref{lem:compact}.
As a preliminary remark, we note that given $u(t,x)$ a  solution of \eqref{eq:main0} and an arbitrary function $v(t,x)$ satisfying $v(t,x)\leq 1$ almost everywhere, we have:
$$ 
 \sign(u-v)\pa_t u \leq [g(u) (1- K*{\mathbb 1}_{\{u =1\}}) + g(1)  K*{\mathbb 1}_{\{u =1\}}]\sign (u-v).
$$
Indeed, this is an equality when $u<1$ and it follows from 
$\pa_t u=0$, $g\geq 0$, and
$\sign (u-v)\geq 0$ 
when $u=1$.

Given $h\in \R^d$, we set $v(t,x) = u(t,x+h)$. Since $u$ and $v$ both solve  \eqref{eq:main0} and satisfy $u,\;v\leq1$, we can write the two inequalities:
$$\sign(u-v) \pa_t u  \leq [g(u) (1- K*{\mathbb 1}_{\{u =1\}}) + g(1)  K*{\mathbb 1}_{\{u =1\}}]\sign(u-v),  $$
and 
$$\sign(v-u) \pa_t v  \leq [g(v) (1- K*{\mathbb 1}_{\{v =1\}}) + g(1)  K*{\mathbb 1}_{\{v =1\}}]\sign(v-u) .$$
Adding these inequalities, we deduce:
\begin{align*}
\sign(u-v)\pa_t (u-v)  & \leq  [(g(u) -g(v))(1- K*{\mathbb 1}_{\{v =1\}})\\
 & \qquad\qquad + (g(1)-g(u)) (K*{\mathbb 1}_{\{u =1\}}-K*{\mathbb 1}_{\{v =1\}}) ]\sign(u-v)
\end{align*}
which implies
\begin{equation}\label{eq:gron1}
  \pa_t |u-v| \leq   L |u-v| + L |K*{\mathbb 1}_{\{u =1\}}-K*{\mathbb 1}_{\{v =1\}}|.
\end{equation}

We now observe that $K*{\mathbb 1}_{\{v =1\}}(x)= \int K(x-y) {\mathbb 1}_{\{u(t,\cdot+h)=1\}}(y)\, dy = K*{\mathbb 1}_{\{u =1\}}(x+h)$
and so \eqref{eq:lipK} gives
\begin{align*}
|K*{\mathbb 1}_{\{u =1\}}-K*{\mathbb 1}_{\{v =1\}}| (x) 
& \leq  TV(K)|h|
\end{align*}
Going back to \eqref{eq:gron1}, we now find
$$
 \pa_t |u-v| \leq  L |u-v| + TV(K)  L   |h|.
$$
and we can conclude as in the  proof   of Lemma \ref{lem:compact}.
\end{proof}

Next, we prove a  non-degeneracy estimate on the linear growth of the saturated set as it provides a positive lower bound on speed of spreading of this set.
\begin{lemma}\label{lem:sat}
    Under the conditions of Lemma \ref{lem:Lip}, and given 
    $(t_0,x_0)$ such that $u(t_0,x_0)=1$,
    there exist constants $r_1,c_1\in(0,+\infty)$ depending on $t_0$, such that
    $$
    u(t,x) = 1 \qquad \mbox{ for all } |x-x_0|\leq r_1 \mbox{ and } t \geq t_0 + c_1 |x-x_0|.
    $$
\end{lemma}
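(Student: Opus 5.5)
We combine the spatial Lipschitz bound of Lemma \ref{lem:Lip} with a lower bound on $\pa_t u$ near a saturated point, using only the structure of \eqref{eq:main0}.

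\textbf{Step 1: closeness to saturation.} Fix $t\ge t_0$ and set $M:=e^{Lt_0}\big[\|u^{in}\|_{\Lip}+TV(K)\big]$. By Lemma \ref{lem:Lip} and the time monotonicity of solutions,
$$
u(t,x)\ \ge\ u(t_0,x)\ \ge\ 1-M|x-x_0| \qquad \text{for all } t\ge t_0 .
$$
We will choose $r_1\le 1/(2M)$. Then $u(t,x)\ge 1/2$ on $B_{r_1}(x_0)$ for every $t\ge t_0$.

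\textbf{Step 2: lower bound on the growth rate.} At any point with $u<1$, the right-hand side of \eqref{eq:main0} is a convex combination of $g(u)$ and $g(1)$, weighted by $1-K*\mathbb 1_{\{u=1\}}$ and $K*\mathbb 1_{\{u=1\}}\in[0,1]$. By \eqref{eq:f1}, both are at least $r\min(u,1)$. Hence
$$
\pa_t u\ \ge\ r u\ \ge\ \tfrac r2 \qquad \text{a.e. on } \{u<1\}\cap\big([t_0,\infty)\times B_{r_1}(x_0)\big).
$$
Note that \eqref{eq:f2} is not needed here, because we never compare $g(u)$ with $g(1)$.

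\textbf{Step 3: time to saturation.} Fix $x\in B_{r_1}(x_0)$. The function $t\mapsto u(t,x)$ is Lipschitz and nondecreasing, and it stays equal to $1$ once it reaches $1$. While $u(t,x)<1$, Step 2 gives $u(t,x)\ge u(t_0,x)+\tfrac r2(t-t_0)$. Combining this with Step 1, $u(t,x)$ must reach $1$ by the time
$$
t_0+\frac{1-u(t_0,x)}{r/2}\ \le\ t_0+\frac{2M}{r}\,|x-x_0|.
$$
After that time it stays equal to $1$. This proves the claim with $c_1=2M/r$ and $r_1=1/(2M)$. Both constants depend on $t_0$ through $e^{Lt_0}$, as the statement allows.

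The only delicate point is justifying the pointwise a.e.-in-time differential inequality along the single fiber $\{x\}\times[t_0,\infty)$. This is exactly what Definition \ref{def:sol} provides, since the equation holds for every $x$ and a.e. $t$.
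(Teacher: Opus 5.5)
Your proof is correct and follows essentially the same route as the paper. The Lipschitz bound of Lemma \ref{lem:Lip} at time $t_0$ gives $u(t_0,x)\ge 1-C_0|x-x_0|\ge \tfrac12$ on a small ball, and the lower bound $\pa_t u\ge \min(g(u),g(1))\ge \tfrac r2$ on $\{u<1\}$ then forces saturation by time $t_0+\tfrac{2C_0}{r}|x-x_0|$, with the same constants $r_1=1/(2C_0)$ and $c_1=2C_0/r$. Your explicit appeal to time monotonicity and to the absolute continuity of $t\mapsto u(t,x)$ simply spells out what the paper does through the integral $\int_{u(t_0,x)}^1 ds/\min(g(s),g(1))$.
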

\begin{proof}
    Since $u(t_0,x_0)=1$, the Lipschitz regularity (Lemma \ref{lem:Lip}) implies 
    \begin{equation}\label{eq:minux}
        u(t_0,x)\geq 1-C_0|x-x_0|
    \end{equation} 
    for some constant $C_0$ depending on $t_0$. In particular, $u(t_0,x)\geq \frac 1 2$ if $|x-x_0|\leq r_1 : = \frac{1}{2C_0}$.
    It remains to show that this implies that $u(t,x)$ reaches the value $1$ in a (truncated) cone $\{(t,x)\, ; \, |x-x_0|\leq r_1 \mbox{ and } t \geq t_0 + c_1 |x-x_0| \}$.
    Indeed, if  $u(t_0,x)<1$, then we denote by $t_*$ the first time when $u(t_*,x)=1$ (with $t_*=\infty$ if this never happens). Since we have \hb{$\pa_t u \geq \min\big(g(u),g(1)\big)$} for $t\in (t_0,t_*)$  we get
    $$
    \int_{u(t_0,x)}^1 \frac{1}{\hb{\min(g(s),g(1))}}\, ds \geq t_*-t_0 .
    $$
    Assumption \eqref{eq:f1} implies \hb{$\min(g(s),g(1))\geq \frac r 2$} for $s\in (1/2,1)$ and so this inequality gives
    $$ 
    t_*-t_0 \leq \frac 2 r (1-u(t_0,x)) \leq \frac {2C_0} r   |x-x_0| \qquad \mbox{ for all }|x-x_0|<r_1.
    $$
The result follows.    
\end{proof}

\medskip
 We now have all the ingredients needed to prove a comparison principle for \eqref{eq:main0}.

\begin{proof}[Proof of Proposition \ref{prop:comp}]
Given $u_1(t,x)$ a solution \hb{and $u_2(t,x)$ a subsolution} of \eqref{eq:main0} with Lipschitz initial data  satisfying $u_1^{in}(x)\geq u_2^{in}(x)$ for all $x\in \R^d$, we consider the function
$$ v_1(t,x) = u_1((1+\delta) t +\eta,x)$$
with small $\delta,\eta>0$, both meant to go to zero.
We are going to show that
$$
v_1(t,x) \geq u_2(t,x) \qquad\mbox{for all } t>0, \; x\in \R^d.
$$
The role of $\delta$ is to make $v_1$ a strict supersolution of the equation, while $\eta$  ensures that the saturated  sets $\{ v_1=1\}$ and $\{ u_2=1 \}$  of $v_1$ and $u_2$ are strictly separated at time $t=0$.  Lemma \ref{lem:sat} (with $t_0=0$) implies that 
\begin{equation}\label{eq:v1u1}
v_1(0,x) = u_1(\eta,x) =1 \mbox{ for all } x \in \{u_1^{in}=1\}+B_{\eta/{c_1}}   
\end{equation}
and the fact that $u_2^{in}(x)\leq u_1^{in}(x)\leq 1$
then implies
$$
\{u_2^{in}=1\}+B_{\eta/{c_1}} \subset \{v_1^{in}=1\}.
$$

The idea of the proof of the comparison principle is to 
look at the first time $t_*$ at which the free boundaries $\pa \{v_1(t) = 1\} $ and $\pa\{u_2(t)=1\}$ meet, which happens at some point $x_*$. For all $t<t_*$, the pressures ${\mathbb 1}_{\{v_1 =1\}}$ and ${\mathbb 1}_{\{u_2 =1\}}$ are then ordered. This will allow us to compare the nonlocal terms $K*{\mathbb 1}_{\{u =1\}}$ in the equations for $v_1 $ and $u_2$ and derive a contradiction from the equality $v_1(t_*,x_*)  = u_2(t_*,x_*)=1$, since $v_1$ is a strict supersolution of the equation.
\medskip

To make this argument rigorous, we introduce the open set $\Omega_1(t)  = \{ v_1(t,\cdot)<1\}$ and define
$$ \Lambda(t)  = \sup_{\overline {\Omega_1(t)}} u_2(t,x).$$
The strict separation property \eqref{eq:v1u1} implies that
$$\Lambda(0)<1.$$

Indeed, for all $x_0\in\Omega_1(0)$ we have $v_1(0,x_0)= u_1(\eta,x_0) <1$ and so $\pa_t u_1 \geq \min\{g(u_1),g(1)\}$ for $t\in (0,\eta)$ and $x=x_0$.
Arguing as in the proof of Lemma  \ref{lem:sat}, 
we deduce that either $u_1(0,x_0)<\frac 1 2$ or $u_1(0,x_0)\geq \frac 1 2$ in which case $\pa_t u_1(t,x_0) \geq \frac r 2 $ for $t\in (0,\eta)$ and thus
$u_1(0,x_0) \leq 1-\frac{\eta r}{2}$.
It follows that
$u_2(0,x_0)\leq u_1(0,x_0) \leq \max\{\frac 1 2 , 1-\frac{\eta r}{2}\}$ and since this bound is uniform in $x_0$, the claim follows.

Next, we want to show that
\begin{equation} \label{eq:LG}
\Lambda (t)<1  \quad\mbox{ for all $t>0$}
\end{equation}
by a contradiction argument.

Since $\Omega_1(t)$ is decreasing and $\pa_t u_2 \leq \sup_{[0,1]} g \leq L$, we have
$$ \Lambda(t+s) =\sup_{\overline {\Omega_1(t+s)}} u_2(t+s,x) \leq \sup_{\overline {\Omega_1(t)}} u_2(t+s,x) \leq \Lambda (t)+ Ls.$$
Assuming that \eqref{eq:LG} does not hold, this semi-continuity implies that there exists 
$t_0 := \inf\{ t\,;\, \Lambda (t)=1\}$ such that 
$$ t_0>0, \quad \Lambda (t_0)=1, \quad\mbox{ and } \Lambda (t) <1 \quad \forall t<t_0.$$
Since $\Lambda (t_0)=1$, \hb{there exists a sequence} $x_n\in \overline{\Omega_1(t_0)}$ such that $u_2(t_0,x_n)\to1$.
On the other hand, from $ \Lambda (t) <1$ for $t<t_0$ we infer that $u_2(t,x)<1$ whenever $v_1(t,x)<1$ and so 
\begin{equation}\label{eq:p1p2}
      {\mathbb 1}_{\{u_2 =1\}}(t,x) \leq {\mathbb 1}_{\{v_1 =1\}}(t,x) \qquad\mbox{ for all }x\in \R^d, \; t \in [0,t_0).
\end{equation}

In order to get a contradiction, we notice that $u_2$ satisfies
$$
\pa_t u_2(t,\hb{x_n}) \leq [g(u_2) + (g(1)-g(u_2) )K*{\mathbb 1}_{\{u_2 =1\}}  ] (t,\hb{x_n}) ,  \qquad \forall t<t_0.
$$
Furthermore, as $\hb{x_n}\in \overline{\Omega_1(t_0)}$, 
Lemma \ref{lem:sat} implies that $v_1(t,\hb{x_n})<1$ for all $t<t_0$ and so (using the definition of $v_1$ and \eqref{eq:p1p2}) we get
\begin{align*}
\pa_t v_1(t,\hb{x_n}) & = (1+\delta)[g(v_1) + (g(1)-g(v_1)) K*{\mathbb 1}_{\{v_1 =1\}} ] (t,\hb{x_n}) \\
& \geq (1+\delta)[g(v_1) + (g(1)-g(v_1)) K*{\mathbb 1}_{\{u_2 =1\}}](t,\hb{x_n})\qquad \forall t<t_0.
\end{align*}
We emphasize that this is where we use the condition on $g$ that $g(1)- g(u)\geq 0$.
Denoting  $h_2:=K*{\mathbb 1}_{\{u_2 =1\}}$, we infer that at $x=\hb{x_n}$ and for $t<t_0$ there holds:
\begin{align*}
\pa_t (v_1-u_2) 
& \geq (1-h_2)[g(v_1)-g(u_2)] + \delta (1-h_2) g(v_1) + \delta g(1) h_2 \\
& \geq \hb{q_n(t)}  (v_1-u_2) + \delta \Big[(1-h_2) g(v_1) + g(1) h_2 \Big]
\end{align*}
with $q_n(t)  = (1-h_2(t,x_n))\frac{g(v_1(t,x_n))-g(u_2(t,x_n))}{v_1(t,x_n)-u_2(t,x_n)}$ satisfying $|q_n|\leq L$. Moreover, \eqref{eq:f1} gives
$$
(1-h_2) g(v_1) + g(1) h_2 \geq \min \big( g(v_1),g(1)\big) \geq r\, v_1 \geq 0 .
$$

Since $v_1(0,x_n)-u_2(0,x_n) \geq 0 $, Gronwall's lemma applied to this differential inequality gives  $v_1(t,x_n)\geq u_2(t,x_n)$ for all $t\leq t_0$. We can then write (still at at $x=\hb{x_n}$ and for $t<t_0$)
\begin{align*}
\pa_t (v_1-u_2) 
& \geq -L  (v_1-u_2) + \delta r\, v_1 
\end{align*}
which yields:
\hb{$$
v_1(t_0,x_n) -u_2(t_0,x_n) \geq \delta\, r\,  e^{-Lt_0} \int_0^{t_0} e^{Ls}\, v_1(s,x_n)\, ds 
\geq \delta\, r \, e^{-Lt_0} \int_{t_0-\tau}^{t_0}  u_2(s,x_n)\, ds
$$
where we set $\tau:=\min\left\{ t_0,\frac{1}{4L}\right\}$.
Since $\pa_t u_2 \leq L$, we have $u_2(s,x_n)\geq u_2(t_0,x_n) - L\tau \geq u_2(t_0,x_n)-\frac 1 4$ for $s\in [t_0-\tau,t_0]$. Hence,
 for $n$ large enough (so that $ u_2(t_0,x_n)\geq \frac3 4$), we get
$$
v_1(t_0,x_n) -u_2(t_0,x_n) \geq \frac{\delta\, r\,\tau}{2}   e^{-Lt_0} >0 .
$$
This is a contradiction, since $v_1(t_0,x_n)-u_2(t_0,x_n)\leq 1-u_2(t_0,x_n)\to 0$.}
\medskip

We have thus proved \eqref{eq:LG}. This implies that \eqref{eq:p1p2} holds for all $t>0$ and so the nonlocal terms satisfy  $K*{\mathbb 1}_{\{v_1 =1\}}(t,x) \geq K*{\mathbb 1}_{\{u_2 =1\}}(t,x)$ for all $x\in \R^d$ and $t>0$.
We can then proceed as above to show that in the set $\{v_1(t,x)<1\}$, we must have $u_2(t,x)\leq v_1(t,x)$. Since this inequality clearly also holds when $v_1=1$, we deduce
$$ v_1(t,x) = u_1((1+\delta) t +\eta,x) \geq u_2(t,x) \quad\mbox{ for all $x\in \R^d$ and $t>0$} .$$
We can now pass to the limit $\delta\to0$ and $\eta\to0$ to get the result.
\end{proof}

\begin{remark}\label{rem:supersol}
\hb{The proof above holds if $u_1$ is a supersolution of \eqref{eq:main0} that is Lipschitz continuous with respect to $x$ (uniformly on bounded time intervals). }
Indeed, a supersolution is automatically non-decreasing in time, so that the sets $\Omega_1(t)$ are still non-increasing. Furthermore,  the proofs of Lemma \ref{lem:sat} and of \eqref{eq:v1u1} use nothing about $u_1$ beyond the lower bound $\pa_t u_1 \geq \min (g(u_1),g(1))$ in $\{u_1<1\}$ and the Lipschitz bound in $x$. 
In this more general setting, this Lipschitz bound must be assumed since it can no longer be deduced from Lemma \ref{lem:Lip}.
\end{remark}

We can now derive the invasion property, as 
Proposition \ref{prop:invasion} follows from the monotonicity in time and the following lemma.
\begin{lemma}\label{lem:stat}
    Assume that $g$ satisfies \eqref{eq:f1} and that $K>0$ in $B_\ell(0)$. Then
    \hb{any stationary solution of \eqref{eq:main0} satisfies either $u=0$ almost everywhere in $\R^d$, or $u\equiv 1$ in $\R^d$.}
\end{lemma}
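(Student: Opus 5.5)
A stationary solution $u(x)$ satisfies $\pa_t u=0$, so at every point where $u(x)<1$ the right-hand side of \eqref{eq:main0} must vanish:
$$
g(u(x))\big(1-h(x)\big)+g(1)\,h(x)=0,\qquad h:=K*{\mathbb 1}_{\{u=1\}} .
$$
The plan is to exploit that both terms are nonnegative. Since $g\ge 0$, $0\le h\le 1$ and $g(1)\ge r>0$ by \eqref{eq:f1}, each term must vanish separately. So on the set $\{u<1\}$ we get two conditions: $h(x)=0$, and $g(u(x))(1-h(x))=g(u(x))=0$, which by $g(u)\ge ru$ forces $u(x)=0$. Thus, pointwise, every $x$ satisfies either $u(x)=1$ or $u(x)=0$, and in addition $h(x)=0$ at every point where $u(x)=0$. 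Write $S=\{u=1\}$, so that $u={\mathbb 1}_S$.

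Next I use $K>0$ on $B_\ell(0)$. The condition $h(x)=\int K(x-y){\mathbb 1}_S(y)\,dy=0$ means $|S\cap B_\ell(x)|=0$ for every $x\notin S$. Suppose $|S|>0$, and pick a Lebesgue density point $y_0$ of $S$. Every $x\in B_\ell(y_0)$ must lie in $S$: otherwise $|S\cap B_\ell(x)|=0$, but $B_\ell(x)$ contains a small ball around $y_0$, on which $S$ has positive measure. Hence $B_\ell(y_0)\subset S$ entirely. Every point of $B_\ell(y_0)$ is then a density point of $S$, so iterating gives $B_{k\ell}(y_0)\subset S$ for all $k$, and therefore $S=\R^d$, i.e.\ $u\equiv 1$. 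In the other case, $|S|=0$, we have $u=0$ almost everywhere (every point is $0$ or $1$). This gives exactly the dichotomy stated in Lemma \ref{lem:stat}.

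The only delicate point is the meaning of ``stationary'' within Definition \ref{def:sol}. I take it to mean that $u$ is independent of $t$ and satisfies the equation a.e.\ in $t$. Since nothing then depends on $t$, the identity above holds at every $x\in\R^d$. This pointwise (not merely a.e.\ in $x$) information is what makes the argument work: it gives $u\in\{0,1\}$ everywhere, and it gives $h=0$ at every point outside $S$. If only a.e.\ information were available, I would run the same density-point argument on the full-measure set where the identity holds, and then use that $h$ is continuous (by \eqref{eq:lipK}) to extend the conclusion to all points.
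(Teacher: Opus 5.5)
Your proof is correct. The reduction to $u={\mathbb 1}_S$, with $K*{\mathbb 1}_S=0$ at every point outside $S=\{u=1\}$, is exactly the paper's first step. The difference is in the propagation step, where you prove the contrapositive of what the paper proves.

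The paper assumes $S\neq\R^d$, fixes $x_0\notin S$, and shows by induction that $|S\cap B_{k\ell}(x_0)|=0$. Each $w\in B_{(k+1)\ell}(x_0)$ lies in some ball $B_\ell(z)$ with $z\in S^c\cap B_{k\ell}(x_0)$; such $z$ exist because $S^c$ has full measure there. Each of these balls meets $S$ in a null set, and a countable subcover gives the next step.

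You instead assume $|S|>0$, start from a Lebesgue density point $y_0$, and propagate the pointwise inclusion $B_{k\ell}(y_0)\subset S$. Your induction is immediate because interior points of $S$ are density points. So you trade the countable-subcover step for the Lebesgue density theorem, and you get an open-set inclusion at every stage, which fits the pointwise nature of the information. The paper stays entirely at the level of null sets and never needs density points. Both arguments rest on the same mechanism: chaining the condition $|S\cap B_\ell(z)|=0$ for $z\notin S$ along balls of radius $\ell$.

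Your closing remark on the a.e.\ case is unnecessary, since Definition~\ref{def:sol} gives the ODE at every $x$. It is also slightly off. With only a.e.\ information you would obtain $u=1$ a.e., and upgrading this to $u\equiv 1$ would require continuity of $u$. Continuity of $K*{\mathbb 1}_S$ does not help, since that function is then identically $1$ anyway.
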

\begin{proof}[Proof of Lemma \ref{lem:stat}]
Stationary solutions must satisfy (for all $x\in \R^d$)
$$\mbox{either } \; u(x)=1 \; \mbox{ or } \; g(u(x)) (1-K\ast {\mathbb 1}_{\{u=1\}}(x))+g(1)K\ast {\mathbb 1}_{\{u=1\}}(x)=0 .$$
Since $g(1)\neq 0$, the second condition is equivalent to
$$ K\ast {\mathbb 1}_{\{u=1\}}(x)=0 \mbox{ and } g(u(x))=0.$$
By \eqref{eq:f1}, this is the same as
$$ K\ast {\mathbb 1}_{\{u=1\}}(x)=0 \mbox{ and } u(x)=0.$$
In particular we see that a stationary solution $u(x)$ can only take the values $0$ or $1$ and so $u={\mathbb 1}_E$ for some set $E$.

Assume now that $E\neq \R^d$ and let $x_0\notin E$. 
Since $K*{\mathbb 1}_E(x_0)=0$, and $K>0$ in $B_\ell(0)$, we 
have:
\begin{equation}\label{eq:nullball}
|E\cap B_\ell (z) | = 0 \qquad \mbox{ for every } z \notin E.
\end{equation}
\hb{It is classical then that $E$ has measure zero. 
Indeed we can show that \eqref{eq:nullball} implies that  $|E\cap B_{k\ell}(x_0)|=0$ for all $k\geq 1$:
This holds for $k=1$ by \eqref{eq:nullball}. Assuming that it holds for some $k\geq1$, we note that  for any $w\in B_{(k+1)\ell}(x_0)$,
the set $B_\ell(w)\cap B_{k\ell}(x_0)$ contains some $z\notin E$ (since $E^c$ has full measure in $B_{k\ell}(x_0)$) for which we have $|E\cap B_\ell(z)|=0$ by \eqref{eq:nullball}. 
Since $z\in B_\ell(w)$, we have $w\in B_\ell(z)$. The balls $\{B_\ell(z)\}_{z\in E^c\cap B_{k\ell}(x_0)}$ thus cover $B_{(k+1)\ell}(x_0)$ and, extracting a countable subcover  yields $|E\cap B_{(k+1)\ell}(x_0)|=0$} and completes the proof.
\end{proof}

We can now prove the invasion property.

\begin{proof}[\hb{Proof of Proposition \ref{prop:invasion}}]
The monotonicity of $u(t,x)$ and $ p(t,x) = {\mathbb 1}_{\{u(t,x)=1\}}$ with respect to $t$ implies that there exist functions $u_\infty(x)$ and $p_\infty(x)$ such that
$$ u(t,x) \to u_\infty(x), \qquad p(t,x)\to p_\infty(x) \qquad \mbox{ as } t\to\infty,$$
the convergences holding pointwise in $x$ and in $L^q_{loc}(\R^d)$ for every $q<\infty$.

In order to show that $u_\infty$ is a stationary solution of \eqref{eq:main0}, we recall that $u(t,x)$ solves (see \eqref{eq:limit0})
$$
\pa_t u   =\big[ g(u )(1-K\ast  p ) +   g(1)K\ast p \big](1-p ) \qquad \mbox{ a.e. in } \R_+\times\R^d ,
$$
and we fix $x\in\R^d$. Since $t\mapsto u(t,x)$ is non-decreasing and bounded by $1$, there exists a sequence $t_n\to\infty$ such that $\pa_t u(t_n,x)\to 0$. Passing to the limit along $t_n$, we deduce
$$
\big[ g(u_\infty )(1-K\ast  p_\infty ) +  g(1) K\ast p_\infty )\big](1-p_\infty ) = 0 \qquad \mbox{ in } \R^d .
$$

It is easy to show that $p_\infty(x)=0$ in $\{u_\infty<1\}$. Furthermore, when $u_\infty(x)=1$, the equation above simplifies to $g(1)(1-p_\infty(x))=0$, that is $p_\infty(x)=1$.
We conclude that $p_\infty = {\mathbb 1}_{\{u_\infty=1\}}$ and that $u_\infty$ is a stationary solution of \eqref{eq:main0}.
Lemma \ref{lem:stat} and the assumption on the initial data implies that $u_\infty\equiv1$.
\end{proof}

Finally, we conclude this section with the proof of Lemma \ref{lem:invasion}:
\begin{proof}[Proof of Lemma \ref{lem:invasion}]
Equation \eqref{eq:main0} gives
$$\pa_t u \geq \min(g(u),g(1))\geq r u \quad \mbox{ as long as } u<1.$$
Given $x_0\in B_\delta(0)$, we have $u(t,x_0)\geq u^{in}(x_0)\geq \eta$ (recall that $u$ is monotone increasing in time).
It follows that as long as $u(t,x_0)<1$, we have
$\pa_t u(t,x_0) \geq   r\eta$. 
We deduce
$$ u(t,x)=1  \quad \mbox{ for all } x\in B_\delta(0)  \mbox{ and for } t\geq t_0$$
with $t_0= 1/(r\eta)$.
\hb{In turn, this implies that there exists $\eta_0>0$ (depending on $K$ and $\delta$) such that $K*{\mathbb 1}_{\{u=1\}}(t,x)\geq \eta_0$ for $t\geq t_0$ and $x\in B_\ell(0)$.
Indeed, $K*{\mathbb 1}_{\{u=1\}}(t,x)\geq K*{\mathbb 1}_{B_\delta(0)}(x)$ for $t\geq t_0$ and this quantity is positive for every $x\in \overline{B_\ell(0)}$. Since $x\mapsto K*{\mathbb 1}_{B_\delta(0)}(x)$ is continuous, its infimum  over   $\overline{B_\ell(0)}$ is positive. Going back to \eqref{eq:main0}, we now get:
\begin{align*}
\pa_t u (t,x)& = g(u(t,x))(1-K* \mathbb{1}_{\{u=1\}}(t,x)) + g(1)K* \mathbb{1}_{\{u=1\}}(t,x) \\
& \geq g(1)K* \mathbb{1}_{\{u=1\}}(t,x)\\
& \geq g(1)\eta_0 \geq r \eta_0
\end{align*}
for $x\in B_\ell(0)$ and $t\geq t_0$, as long as $u(t,x)<1$.
We deduce that 
$$ u(t,x)=1 \quad \mbox{ in } B_\ell(0), \mbox{ for } t\geq t_0+t_1$$
with $t_1=1/(r \eta_0)$.
This argument can now be iterated, with the same constant $\eta_0$ to show that if $u(t,\cdot)=1$ in $B_R(0)$ for $t\geq \bar t$ and $R\geq \delta$, then $u(t,\cdot)=1$ in $B_{R+\ell-\delta}(0)$ for $t\geq \bar t+t_1$.}

Starting from $R=\delta$ and $\bar t = t_0$, we find
$$ u(t,x)=1 \quad \mbox{ in } B_{\delta+k(\ell-\delta)}(0), \mbox{ for } t\geq t_0+kt_1$$
for every $k\geq 0$, which yields \eqref{eq:ndgsp} with $\underline c = \frac{\ell-\delta}{t_1}$ (upon replacing $t_0$ by $t_0+t_1$).
\end{proof}

\section{Traveling waves: Proof of Theorem \ref{thm:TW}}\label{sec:TW}
\hb{Traveling} wave solutions are key to understand the dynamics of the convergence of $u(t,x)$ to the stationary state $1$ as $t\to\infty$. We now discuss and characterize the existence of such solutions for \eqref{eq:main0}.

 Planar traveling waves (or traveling waves for short) are solutions of the form $u(t,x)=\phi(x\cdot e-ct)$ with $e\in \mathbb S^{d-1}$, $c\in \R$ and 
$$ \lim_{s\to -\infty} \phi(s) = 1, \qquad  \lim_{s\to +\infty} \phi(s) = 0. $$
Since all solutions of \eqref{eq:main0} are monotone increasing and Lipschitz continuous with respect to $t$, the profile function $s\mapsto \phi(s)$ is  monotone decreasing and Lipschitz continuous.
In particular, there exists $s_0$ such that $\{\phi=1\}=(-\infty,s_0]$, and up to translation, we can always assume that $s_0=0$, which we will do from now on.
Traveling wave solutions of equation \eqref{eq:main0} are found as solutions of:
$$ - c \phi' (x\cdot e)= g(\phi (x\cdot e))  + (g(1)-g(\phi(x\cdot e)))K\ast {\mathbb 1}_{\{x \cdot e\leq 0\}}(x) \quad  \mbox{ for } x\cdot e >0.
$$
Given $e$, $|e|=1$, let  $H(e)=\{ x \; ; \; x\cdot e <0\}$.
The equation for traveling wave solutions involves the function $h:\R\to [0,1]$ which is defined as
\begin{equation}\label{eq:defh} 
h(s) :=   K\ast {\mathbb 1}_{H(e)}( s e).
\end{equation}
Then, for any $x$, there holds
\begin{equation*}
K\ast {\mathbb 1}_{H(e)}( x) = h( x\cdot e).
\end{equation*}
Indeed, up to a rotation, we can assume that $e=e_1$. Then, splitting the coordinates into $x=(x_1, x')$ with $x'=(x_2, \ldots, x_d)$, we can write
\begin{align*}
 K\ast {\mathbb 1}_{\{ x_1<0\}}(x) 
 &  = \int_{\{y_1<0\}} K(x-y)\, dy \\
 & =  \int_{-\infty}^0 \int_{\R^{d-1}} K(x_1-y_1,x'-y')\, dy' \, dy_1 \\
 & = \int_{-\infty}^0 \int_{\R^{d-1}} K(x_1-y_1,y')\, dy' \, dy_1 
\end{align*}
which is a function of $x_1$ only and coincides with $h(x_1)$.
This computation shows that
$$
h(s)= \int_{\{ y_1 >s\}} K(y) dy.
$$
The fact that $K$ is radially symmetric implies that the function $h(s)$ defined in \eqref{eq:defh}, does not depend on the particular choice of $e$. Without that assumption it
would depend on the direction $e$.

The function $h$ is determined from  $K$ and can be computed explicitly for simple choices of the kernel $K$. It satisfies
\begin{align*}
   & h:\R \to [0,1] \mbox{ is non-increasing }, \quad h(0)=1/2, \\
    &  \; h(s)=0 \mbox{ for all } s\geq \ell,\quad
    h(s)=1 \mbox{ for all }
s\leq -\ell.
\end{align*}

Traveling waves are thus determined by the following equation. 
\begin{lemma}
Up to translation, we can assume that the profile $\phi$ of a traveling wave with speed $c>0$,  satisfies $\phi(s)=1$ for all $s\leq 0$ and solves the following initial value problem:
\begin{equation}\label{eq:c}
 \phi(0)=1 ,\qquad   - c \phi' (s)= 
g(\phi (s))  + (g(1)-g(\phi(s)))h(s) \quad s\in [0,\infty)  .
 \end{equation}
 with $h$ defined by \eqref{eq:defh}.
\end{lemma}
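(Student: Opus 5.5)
The plan is to substitute the ansatz $u(t,x)=\phi(x\cdot e-ct)$ into Definition \ref{def:sol}, use the monotonicity of $\phi$ to identify the saturated set as a half-space, and reduce the nonlocal term to $h$.

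\emph{Step 1: the saturated set is a half-line.} By Proposition \ref{prop:existence} and Theorem \ref{thm:limit}, solutions are non-decreasing in $t$, and by Lemma \ref{lem:Lip} they are Lipschitz. For $c>0$, $t\mapsto\phi(x\cdot e-ct)$ is non-decreasing, so $\phi$ is non-increasing and continuous. Since $\phi(-\infty)=1$ and $\phi\le 1$, the set $\{\phi=1\}$ is a closed interval $(-\infty,s_0]$. The value $s_0=-\infty$ must be excluded. If $s_0=-\infty$, then $\phi<1$ everywhere, so ${\mathbb 1}_{\{u=1\}}\equiv 0$ and the equation reads $-c\phi'=g(\phi)$. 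As $s\to-\infty$ we have $\phi\to1$, so $g(\phi)\to g(1)>0$. Then $\phi'\le -g(1)/(2c)$ on some half-line $(-\infty,s_1]$, which forces $\phi\to+\infty$ as $s\to-\infty$. This contradicts $\phi\le1$. On the other side, $s_0<+\infty$ because $\phi(+\infty)=0$. After a translation we may take $s_0=0$, so $\phi\equiv1$ on $(-\infty,0]$ and $\phi(0)=1$.

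\emph{Step 2: compute the nonlocal term.} At time $t$ we have $\{u(t,\cdot)=1\}=\{x:\ x\cdot e-ct\le0\}$. Up to a null set, this is the translate $H(e)+cte$. Hence
\[
K*{\mathbb 1}_{\{u(t,\cdot)=1\}}(x)=K*{\mathbb 1}_{H(e)}(x-cte)=h(x\cdot e-ct),
\]
using the identity $K*{\mathbb 1}_{H(e)}(x)=h(x\cdot e)$ established just above. Radial symmetry \eqref{eq:K2} makes $h$ independent of $e$.

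\emph{Step 3: the ODE.} Fix $x$ and set $s=x\cdot e-ct$. Because $\phi$ is Lipschitz, $\pa_t u=-c\phi'(s)$ for a.e.\ $t$. Where $s>0$ we have $u<1$, so equation \eqref{eq:main0} gives
\[
-c\phi'(s)=g(\phi(s))\big(1-h(s)\big)+g(1)h(s)=g(\phi(s))+\big(g(1)-g(\phi(s))\big)h(s)
\]
for a.e.\ $s>0$. The right-hand side is continuous in $s$, since $h$ is continuous: $K\in L^1$ and $h(s)=\int_{\{y_1>s\}}K$. Therefore $\phi\in C^1$ on $(0,\infty)$ and \eqref{eq:c} holds classically. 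Continuity of $\phi$ at $0$ supplies the initial condition $\phi(0)=1$.

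The only point needing real care is the exclusion of $s_0=-\infty$ in Step 1. Everything else is direct substitution.
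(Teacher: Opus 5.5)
Your proof is correct and follows the paper's own route: the profile is monotone and Lipschitz, so the saturated set is a half-line that you translate to $(-\infty,0]$, and the convolution with the translated half-space gives $h(x\cdot e-ct)$. You also add two details the paper leaves implicit: you rule out an empty saturated set using $g(1)>0$, and you upgrade $\phi$ to $C^1$ on $(0,\infty)$. One small point: monotonicity and Lipschitz continuity of $\phi$ follow directly from Definition~\ref{def:sol} and the nonnegativity of the right-hand side of \eqref{eq:main0}, so you do not need to cite Proposition~\ref{prop:existence}, Theorem~\ref{thm:limit} and Lemma~\ref{lem:Lip}, which concern the constructed solutions or assume Lipschitz initial data.
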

Note that traveling wave profiles $\phi$ do not depend on $e$ but do depend on the dimension $d$ and $K$ because of the coefficient $h(s)$.

To consider the general initial value problem  \eqref{eq:c}, we extend $g(s)$ outside $[0,1]$ by setting $g(s)=0$ for $s\leq 0$, and $g(s)= g(1)$ for $s\geq 1$.
We denote by $\phi_c$ the unique solution of  \eqref{eq:c} (for $c>0$). We have to identify the values of $c$ for which $\phi_c(s)$ remains non-negative and converges to $0$ at $+\infty$.
This last condition is actually easy to obtain:
Since $h(s)=0$ in the interval $[\ell,+\infty)$, \eqref{eq:c} writes
$$ - c \phi_c' (s)= g(\phi_c (s)) , \qtext{for} s\geq \ell.$$
Hence $\phi_c(s)$ takes non-negative values in $[\ell,+\hb{\infty)}$ if and only if $\phi_c(\ell)\in [0,1]$ in which case it satisfies $\lim_{s\to \infty}\phi_c(s)=0$. In addition, $\phi_c(\ell)=0$ implies $\phi_c(s)=0$ for all $s\geq \ell$ (which corresponds to the traveling wave with minimal speed).
\medskip

Proving the existence of a traveling wave solution with speed $c$ is thus reduced to making sure that the solution of \eqref{eq:c} takes non-negative values in $[0,\ell]$.
We note that if there exists $s_0\in (0,\ell)$ such that  $\phi_c(s_0)=0$, then $-c\phi_c'(s_0) = g(1) h(s_0)>0$ and thus $\phi_c'(s_0)<0$. This implies that for any $c>0$, $\phi_c$ can  vanish at most once in $(0,\ell)$ and when it does we will have $\phi_c(\ell)<0$. So we only need to study the sign of $\phi_c(\ell)$ as a function of $c$.
\medskip

The discussion above shows that Theorem \ref{thm:TW} is an immediate consequence of the following.
\begin{proposition}\label{prop:c}
Let $\phi_c$ be the unique solution of  \eqref{eq:c}.
There exists a unique $c^*>0$ such that $\phi_{c^*}(\ell) =0$. \hb{It characterizes the sign of $\phi_c(\ell)$: one has
$\phi_c(\ell)>0$ if $c>c^*$, and $\phi_c(\ell)<0$ if $c<c^*$.}
Furthermore, $\phi_c(s)>0$ in $(0,\ell)$ for all $c\geq c^*$.
\end{proposition}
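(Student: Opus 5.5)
The plan is to view $\phi_c(\ell)$ as a function of $c\in(0,\infty)$ and show three things: it is continuous, it is strictly increasing, and it is negative for small $c$ and positive for large $c$. The intermediate value theorem then gives a unique $c^*$ with $\phi_{c^*}(\ell)=0$, together with the stated sign pattern.

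Write \eqref{eq:c} as $\phi'=-G(s,\phi)/c$, where
$$G(s,\phi)=g(\phi)(1-h(s))+g(1)h(s),$$
and $g$ is extended by constants outside $[0,1]$. Since $g$ is Lipschitz and $h$ is continuous ($h(s)=\int_{\{y_1>s\}}K$ with $K\in L^1$), the solution $\phi_c$ exists on $[0,\ell]$. By continuous dependence on the parameter $c$, the map $c\mapsto\phi_c(\ell)$ is continuous. The key pointwise bound comes from $g\geq 0$ and $h\in[0,1]$:
$$G(s,\phi)\geq g(1)h(s),$$
and the right-hand side is strictly positive for $s<\ell$. This positivity uses $\Supp K=B_\ell(0)$ from \eqref{eq:K2}: the set $\{y_1>s\}\cap B_\ell$ has positive measure when $s<\ell$. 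Also $g(1)\geq r>0$ by \eqref{eq:f1}.

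\emph{Monotonicity.} Take $c_1<c_2$ and compare $\phi_{c_1}$ and $\phi_{c_2}$ on $(0,\ell]$.
\begin{itemize}
\item Near $s=0$: both start at $1$, and $\phi_{c_i}'(0)=-g(1)/c_i$ because $G(0,1)=g(1)$. Hence $\phi_{c_2}>\phi_{c_1}$ on some interval $(0,\epsilon)$.
\item Away from $0$: suppose there is a first point $s_0\in(0,\ell]$ where the two profiles coincide, with common value $\phi$. Then $\phi_{c_2}'(s_0)=-G(s_0,\phi)/c_2>-G(s_0,\phi)/c_1=\phi_{c_1}'(s_0)$ whenever $G(s_0,\phi)>0$. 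This holds for $s_0<\ell$ by the bound above, which contradicts $\phi_{c_2}-\phi_{c_1}$ decreasing to $0$ at $s_0$.
\item At $s_0=\ell$ itself: strictness is simply inherited, since $\phi_{c_2}>\phi_{c_1}$ on $(0,\ell)$ with $\phi_{c_2}'\geq\phi_{c_1}'$ near $\ell$ wherever they are close. If needed, I would redo the touching argument with the integral form, using that $\int_{s_0-\epsilon}^{s_0}G>0$.
\end{itemize}
This gives strict monotonicity of $c\mapsto\phi_c(\ell)$. I expect this endpoint care, where $h(\ell)=0$, to be the only delicate point.

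\emph{Limits.} For large $c$, the bound $0\leq G\leq\sup g\leq L$ gives $\phi_c(\ell)\geq 1-L\ell/c>0$. For small $c$, integrate $\phi_c'\leq -g(1)h(s)/c$ to get
$$\phi_c(\ell/2)\leq 1-\frac{g(1)}{c}\int_0^{\ell/2}h<0 .$$
Afterwards $\phi_c'\leq 0$, since $G\geq0$ everywhere, so $\phi_c(\ell)<0$. Hence $c^*$ exists, is unique, and satisfies $c^*>0$.

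\emph{Positivity.} For $c\geq c^*$ we have $\phi_c(\ell)\geq0$. As observed just before the proposition, if $\phi_c(s_0)=0$ for some $s_0\in(0,\ell)$, then $\phi_c'(s_0)<0$, and $\phi_c$ then stays negative (there $-c\phi_c'=g(1)h\geq0$), forcing $\phi_c(\ell)<0$. This is a contradiction, and since $\phi_c(0)=1$ we conclude that $\phi_c>0$ on $(0,\ell)$.
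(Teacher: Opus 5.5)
Your plan is correct and uses the same ingredients as the paper: crude bounds on $\phi_c(\ell)$ for large and small $c$, continuity of $c\mapsto\phi_c(\ell)$, a first-crossing comparison in $c$, and the fact that $\phi_c$ vanishes at most once in $(0,\ell)$. The one structural difference is that you prove strict monotonicity of $c\mapsto\phi_c(\ell)$ for \emph{all} $c>0$ and conclude with the intermediate value theorem. The paper instead, in Lemma~\ref{lem:mono}, compares profiles only when $\phi_{c_0}\geq 0$ on $[0,s_0]$, and defines $c^*=\inf\{c\,;\,\phi_c(\ell)>0\}$. Your global version is legitimate: with $g$ extended by $0$ on $(-\infty,0]$, one has $G(s,\phi)\geq g(1)h(s)>0$ on $[0,\ell)$ whatever the sign of $\phi$. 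It also makes the sign pattern immediate.

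The endpoint step $s_0=\ell$, however, is not proved as written. ``Strictness is simply inherited'' is not an argument, since a function positive on $(0,\ell)$ can still vanish at $\ell$. The claim that $\phi_{c_2}'\geq\phi_{c_1}'$ near $\ell$ wherever the profiles are close is also unjustified. With $w=\phi_{c_2}-\phi_{c_1}$,
\[
w'=\Big(\frac{1}{c_1}-\frac{1}{c_2}\Big)G(s,\phi_{c_1})+\frac{G(s,\phi_{c_1})-G(s,\phi_{c_2})}{c_2}.
\]
The first term can degenerate as $s\to\ell$, because $h(\ell)=0$ and $g(\phi_{c_1}(\ell))=0$ when $\phi_{c_1}(\ell)\leq 0$. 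The second term is bounded below only by $-\frac{L}{c_2}w$. So closeness alone gives no sign for $w'$.

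The repair is the Gronwall step that closes the proof of Lemma~\ref{lem:mono}. The first term is nonnegative and $G(s,\cdot)$ is $L$-Lipschitz, so $w'\geq-\frac{L}{c_2}w$ wherever $w>0$. Hence $s\mapsto e^{Ls/c_2}w(s)$ is nondecreasing on $(0,\ell)$, and $w(\ell)\geq e^{-L(\ell-s)/c_2}w(s)>0$ for any $s\in(0,\ell)$. Started from small $s$, this same estimate gives $w>0$ on all of $(0,\ell]$ directly. So neither the interior first-crossing argument nor the positivity of $h$ is actually needed for the monotonicity. With this fix, your proof is complete.
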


\begin{figure}[ht]
  \centering
  \includegraphics[width=0.62\textwidth]{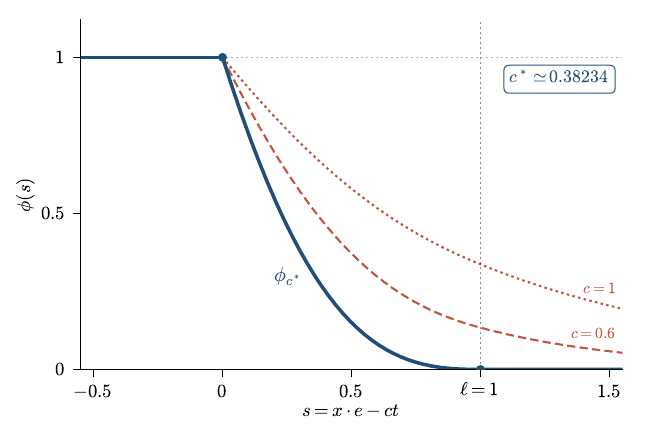}
  \caption{Traveling wave profiles in dimension $d=2$ when
  $K=\frac{1}{|B_\ell|}\mathbb 1_{B_\ell}$ with $\ell=1$ and $g(u)=u$. 
  The minimal speed is
  $c^*\simeq0.382$. The profile $\phi_{c^*}$ (solid blue line) vanishes
  for  $s\geq \ell$. For comparison, the profiles
  $\phi_c$ with $c=0.6$ and $c=1$ (dashed red lines) remain positive on the whole line.}
  \label{fig:tw}
\end{figure}

The proof of this proposition relies on the monotonicity of the map $c\mapsto \phi_c(s)$ in the set where $\phi_c\geq 0$ as given by the following lemma:
\begin{lemma}\label{lem:mono} 
    If $\phi_{c_0}(s)\geq 0$ for $s\in [0,s_0]$ with $s_0\leq \ell$, then for all $c>c_0$ we have 
    $$ \phi_c(s) > \phi_{c_0} (s) \quad\mbox{ for all } s\in (0,s_0].$$
\end{lemma}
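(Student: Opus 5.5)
Since $\phi_c(0)=1$ for every $c$, I would compare the two solutions through the ODE \eqref{eq:c} written as
$$\phi' = -\frac{1}{c}F(s,\phi), \qquad F(s,\phi):=g(\phi)(1-h(s))+g(1)h(s).$$
The point to exploit is that $F\ge 0$ on $[0,1]$: by \eqref{eq:f1}, $F(s,\phi)\geq \min(g(\phi),g(1))\geq 0$. So a larger $c$ gives a slower decrease from the same starting value $1$. Set $w=\phi_c-\phi_{c_0}$, with $w(0)=0$. Note that $\phi_c\le 1$ throughout, since $\phi'\le 0$ whenever $\phi\le 1$ (with the extension of $g$, $F\ge0$ for all $\phi$).

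\emph{Step 1: strict positivity near $s=0$.} At $s=0$ we have $h(0)=1/2$ and $\phi=1$, so $F(0,1)=g(1)>0$. Hence
$$w'(0)=g(1)\Big(\frac1{c_0}-\frac1c\Big)>0,$$
and $w>0$ on some interval $(0,\varepsilon)$. This needs $h$ continuous at $0$, which holds because $K\in L^1$. If one prefers to avoid pointwise derivatives, the same conclusion follows from the integral form: $F(s,\phi(s))\to g(1)$ as $s\to0$.

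\emph{Step 2: no crossing before $s_0$.} Suppose $w$ vanishes somewhere in $(0,s_0]$, and let $s_1$ be the first such zero. Then $\phi_c(s_1)=\phi_{c_0}(s_1)=:\phi_1\in[0,1]$, using $\phi_{c_0}\ge0$ on $[0,s_0]$. The hypothesis $\phi_{c_0}\ge 0$ is used exactly here, so that $F(s_1,\phi_1)\ge 0$. We get
$$w'(s_1)=F(s_1,\phi_1)\Big(\frac1{c_0}-\frac1c\Big)\geq 0.$$
A first zero reached from $w>0$ requires $w'(s_1)\le 0$. This is not yet a contradiction when $F(s_1,\phi_1)=0$, and that degenerate case is the main obstacle.

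\emph{Step 3: the degenerate case.} I would resolve this by a Gronwall argument instead of pointwise derivatives. On $[0,s_1]$, with $\phi_c\ge\phi_{c_0}\ge0$, we have
$$w' = \Big(\frac1{c_0}-\frac1c\Big)F(s,\phi_{c_0}) - \frac1c\,(1-h)\big(g(\phi_c)-g(\phi_{c_0})\big)\geq -\frac{L}{c}\,w + \Big(\frac1{c_0}-\frac1c\Big)F(s,\phi_{c_0}).$$
Integrating from $0$ gives
$$w(s_1)\geq \Big(\frac1{c_0}-\frac1c\Big)\int_0^{s_1}e^{-L(s_1-\sigma)/c}\,F(\sigma,\phi_{c_0}(\sigma))\,d\sigma.$$
The integrand is nonnegative and close to $g(1)>0$ near $\sigma=0$, so $w(s_1)>0$, a contradiction. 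Since this argument is uniform in $s_1$, it in fact shows directly that $w(s)>0$ for all $s\in(0,s_0]$, which would make Steps 1 and 2 superfluous. It uses only $g$ Lipschitz, $0\le h\le1$, and $F\ge0$ along $\phi_{c_0}$, the last of which is guaranteed by $\phi_{c_0}\in[0,1]$ on $[0,s_0]$. Condition \eqref{eq:f2} is not needed.
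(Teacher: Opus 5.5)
Your proof is correct and follows essentially the paper's route: a first-crossing comparison through the ODE, plus a Gronwall argument on $w=\phi_c-\phi_{c_0}$ for the degenerate case where the right-hand side vanishes (in the paper, the endpoint case $\phi_c(s_0)=\phi_{c_0}(s_0)=0$). Your integrated version, which keeps the source term $\big(\frac1{c_0}-\frac1c\big)F(\sigma,\phi_{c_0}(\sigma))$, is a mild streamlining that handles all $s\in(0,s_0]$ at once using only $F\ge 0$ and $F(0,1)=g(1)>0$; Step 1 is still needed to know $w\ge0$ before the first zero, unless you write $g(\phi_c)-g(\phi_{c_0})=q\,w$ with $|q|\le L$. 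The paper's crossing step instead relies on $\phi_{c_0}'<0$ on $[0,s_0)$, which implicitly uses $h>0$ in $[0,\ell)$.
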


\begin{proof}[Proof of Lemma \ref{lem:mono}]
    \hb{By assumption,} $\phi_{c_0}$ is non-negative in $[0,s_0]$. Then, 
 Equation \eqref{eq:c} implies that $\phi_{c_0}'(s)< 0$ for all $s\in[0, s_0)$,
 as we can write the right hand side of \eqref{eq:c}  in the form 
$g(\phi (s)) (1-h(s)) + g(1) h(s)$.
For $s>0$ small, the equation shows that 
$\phi_c(s)>\phi_{c_0}(s)$. If there were a first value $0< s_1<s_0$ where $\phi_c$ and $\phi_{c_0}$ would cross, we would infer from the equation that 
$\phi_c'(s_1) >\phi_{c_0}'(s_1)$, which is impossible. Therefore, $\phi_c > \phi_{c_0}$ in $(0, s_0)$. The same reasoning applies up to $s=s_0$, at least when $\phi_{c_0}(s_0)>0$, since in this case, $\phi_{c_0}'(s_0)<0$. We only need to also discard the case $\phi_c(s_0)=\phi_{c_0}(s_0)=0$. If this were to happen, setting $w:= \phi_c - \phi_{c_0}$ we would have $w>0$ in $(0, s_0)$ and $w(s_0)=0$. Using the equation we infer that 
\begin{equation*}
    - c w'(s) < \hb{[}g(\phi_c(s)\hb{)}   - g(\phi_{c_0}(s))\hb{]} (1-h(s)).
\end{equation*}
Using the Lipschitz character of $g$, we infer that there is a constant $m>0$ such that 
\begin{equation*}
    c w'(s) + m w (s) >0 \qtext{for} s\in [0, s_0).
\end{equation*}
This would say that $s\mapsto (e^{m s/ c} w )$ is increasing, a clear contradiction with $w(s_0)=0$ and $w(s)>0$ for $s<s_0$. Therefore, when $\phi_{c_0}(s_0)=0$, we also get $\phi_c(s_0) >0$. 
\end{proof}

\begin{proof}[Proof of Proposition \ref{prop:c}]
The proposition follows from Lemma \ref{lem:mono} and the following simple observations:
\begin{itemize}
\item If $c>    \ell \sup g$, then $\phi_c(\ell)>0$.
Indeed, we have $ -c \phi_c'(s)  \leq \sup g$ for all $s>0$ and so $\hb{\phi_c}(\ell)\geq 1- (\ell/c) \sup g$.
\item If $c<   g(1) \int_0^\ell h(s)\, ds$, then $\phi_c(\ell) <0$. Indeed, if $\phi_c(\ell) \geq 0$ , then we have   
 $-c\phi_{c}'(s) \geq g(1) h(s)$ for $s\in(0,\ell)$ and so $\phi_c(\ell) \leq 1-\frac{  g(1)}{c} \int_0^\ell h(s)\, ds <0$, a contradiction.
 \end{itemize}

We can thus define 
$$ c^* := \inf\{ c\,;\, \phi_c(\ell)>0\},
\quad 0 < c^* < \infty.
$$

Lemma \ref{lem:mono} implies that 
$\phi_c(\ell)>0$ for all $c>c^*$ and $\phi_c(\ell)<0$ for all $c<c^*$.
From the continuity of the map $c\mapsto \phi_c(\ell)$, it follows that  $\phi_{c^*}(\ell)=0$ which completes the proof.
\end{proof}
Note that the arguments above provide the following lower and upper bounds on the minimal speed~$c^*$:
$$
g(1) \int_0^\ell h(s)\, ds\leq c^* \leq \ell \sup g.
$$

\begin{remark}
    It should be noted that even in the case when $K$ is the normalized characteristic function of a ball, the planar \hb{traveling} fronts $\phi_c$ which are functions of one variable, actually depend on the dimension $d$. This is quite different from the classical framework of reaction-diffusion equations.
\end{remark}

\section{Spreading of compactly supported solutions}\label{sec:spreading}
\subsection{Spreading of the support and the saturated set}
The existence of traveling wave solutions with finite speed, together with the comparison principle (Proposition \ref{prop:comp}), imply that compactly supported solutions propagate with finite speed, as stated in Corollary \ref{cor:supp}.
In this section, we will prove that this spreading occurs with  speed $c^*$. 
\medskip

We note that with general reaction-diffusion \hb{equations}, it is important to distinguish the spreading of the support of the solution (which might take place instantly) and the spreading of the invasion front (characterized by traveling wave solutions).
For our problem, this distinction is not relevant. Indeed, we can show that the support of $u(t,\cdot)$ and the saturated set remain close to each other for all time, as stated in Proposition \ref{prop:support} which we prove below:

\begin{proof} [Proof of Proposition \ref{prop:support}]
Equation \eqref{eq:main0} together with assumption \eqref{eq:f1} imply
$$ \pa_t u \leq L( u  +  K*{\mathbb 1}_{\{u =1\}}). $$
In particular we have 
$$ 
u(t,x) \leq u(0,x) \hb{e^{Lt}} + \int_0^t e^{L(t-s)} K*{\mathbb 1}_{\{u =1\}}(s,x)\, ds.
$$
This inequality implies that if $x_0\in \R^d$ is such that $u(0,x_0)=0$ and $u(t_0,x_0) >0$, then we must have 
$$\int_0^{t_0} e^{L(t_0-s)} K*{\mathbb 1}_{\{u =1\}}(s,x_0)\hb{\, ds}>0.$$
Since the function $s\to K*{\mathbb 1}_{\{u =1\}}(s,x_0)$ is non-decreasing, this means that $K*{\mathbb 1}_{\{u =1\}}(t_0,x_0)>0$, and the fact that $K$ is supported in $B_\ell(0)$ yields
$$|\supp \, {\mathbb 1}_{\{u =1\}}(t_0,\cdot) \cap B_\ell (x_0) |  = |S(t_0) \cap B_\ell (x_0) |   >0$$
which implies that $x_0\in S(\hb{t_0})+B_\ell$.

We just proved that given $(t_0,x_0)$ such that $u(t_0,x_0) >0$, we must have either $u(0,x_0)>0$ or 
$x_0\in S(t_0)+B_\ell(0)$. The result follows.
\end{proof}

\subsection{Speed of spreading - Proof of Theorem \ref{thm:asymptotic-spreading}}

The first part of Theorem \ref{thm:asymptotic-spreading}, follows immediately from the comparison principle and the existence of a traveling wave  solution with speed $c^*$.
We denote by $\phi_*$ the profile of the traveling wave with minimal speed $c^*$ constructed in the previous section.
Since $u^{in}$ is compactly supported and $\phi_*(s)=1$ for $s\leq 0$, it is clear that there exists a constant $M$
such that for any  direction $e\in\mathbb S^{d-1}$ 
$$ 
u^{in}(x) \leq \phi_*(x\cdot e- M) \mbox{ for all }x\in \R^d.
$$
The comparison principle then implies
$$ u(t,x) \leq \phi_*(x\cdot e-c^*t - M) \mbox{ for all }x\in \R^d, \; t>0 , \mbox{ and all } e\in\mathbb S^{d-1}.$$
From this we infer that
$$
u(t,x) \leq \inf_{e\in\mathbb S^{d-1}} \phi_*(x\cdot e-c^*t - M)=
 \phi_*(|x|-c^*t - M)
$$
which yields the limit \eqref{eq:spreading-upper} as
$$
\limsup_{t\to\infty} \{ \sup_{|x| \geq ct} u(t,x)\} \leq \limsup_{t\to\infty}  \phi_*((c-c^*)t - M) = 0 \quad \mbox{ for all
} c>c^*.
$$
This result means that $c^*$ is an upper bound for the speed of spreading for a solution starting with a compactly supported initial datum. 

\medskip
We now establish that $c^*$ is  a lower bound for the speed of spreading by proving the more precise statement~\eqref{eq:ballinvasion}.
To this end, we construct compactly supported radial subsolutions propagating with any prescribed speed $c<c^*$.  

\medskip

Fix $c<c^*$ and define
\[
    v(t,x):=\phi_*(|x|-ct-R)
\]
which satisfies:
\[
    \operatorname{supp}v(t,\cdot)= B_{R+ct+\ell},
    \qquad
    \{v(t,\cdot)=1\}=B_{R+ct}.
\]
Let
\[
    s:=|x|-ct-R.
\]
On the set $\{v<1\}$ we have $s>0$, and the traveling-wave equation
\eqref{eq:c} gives
\begin{align*}
    \partial_t v(t,x)
      &=-c\phi_*'(s)\\
      &=g(v(t,x))
        +(g(1)-g(v(t,x)))h(s)
        +(c^*-c)\phi_*'(s).
\end{align*}
The same equation and the monotonicity of $\phi_*$ imply
\[
    -c^*\phi_*'(s)
       =g(\phi_*(s))
        +(g(1)-g(\phi_*(s)))h(s)
       \ge (g(1)-g(\phi_*(s)))h(s).
\]
and so
$$\phi_*'(s) \leq -\frac{1}{c^*}(g(1)-g(v(t,x)))h(s) .$$
It follows that
\begin{equation*}
     \partial_t v(t,x)
\le g(v(t,x))
       +\left( 1 - \frac{c^*-c}{c^*}\right) (g(1)-g(v(t,x)))h(s).
\end{equation*}
Letting 
\[
    \delta:=\frac{c^*-c}{c^*}\in(0,1).
\]
we get
\begin{equation} \label{eq:delta}
    \partial_t v(t,x)
\le g(v(t,x))
       +(1-\delta)(g(1)-g(v(t,x)))h(s).
\end{equation}

In order to obtain a subsolution, we need to compare $(1- \delta) h(s)$ with the
corresponding coefficient in the equation~($\star$), namely $K \ast \mathbb{1}_{\{v=1 \}} $. To this end we are going to use that $\delta>0$.
Recall from \eqref{eq:defh} that
\[
    h(s)=K*\mathbf 1_{\{x_1<0\}}(se_1)
        =\int_{\{x_1>s\}}K(x)\,dx .
\]
This comparison relies on the following geometric ingredient that yields a comparison between the dispersal when the saturated region  is a large ball and that generated by the limiting half space when the radius goes to infinity. Indeed, the former is what describes spreading from a confined initial state whereas the latter represents the spreading for planar traveling fronts.

\begin{lemma}[Large balls versus half-spaces]\label{lem:large-ball-halfspace}
Let $K\ge0$ be radially symmetric and supported in $B_\ell$.  There is a
constant $C_d>0$, depending only on the dimension, such that, for every
$R\ge2\ell$,
\begin{equation}\label{eq:large-ball-relative}
    K*\mathbf 1_{B_R}(x)
    \ge
    \left(1-C_d\frac{\ell}{R}\right)
    h(|x|-R)
    \qquad\text{for all }|x|\ge R.
\end{equation}
In particular, for every $\delta>0$ there exists $R_\delta>0$ such that
\begin{equation}\label{eq:large-ball-delta}
    (1-\delta)h(|x|-R)
    \le K*\mathbf 1_{B_R}(x)
    \qquad\text{for all }R\ge R_\delta,\quad |x|\ge R.
\end{equation}
\end{lemma}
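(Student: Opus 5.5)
By rotational invariance we may take $x=\rho e_1$ with $\rho\ge R$, and set $s=\rho-R\ge 0$. If $s\ge\ell$ then $h(s)=0$ and there is nothing to prove, so we restrict to $0\le s<\ell$. The starting point is the identity
$$K*\mathbf 1_{B_R}(x)=\int_{\{y:\,|x-y|<R\}}K(y)\,dy ,$$
to be compared with $h(s)=\int_{\{y_1>s\}}K(y)\,dy$. The latter is the mass of $K$ on the half-space $\{y:\, (x-y)_1<R\}$, whose boundary hyperplane is tangent to the sphere $\partial B_R(x)$ at the point $y=(s,0,\dots,0)$, since $x_1-s=R$. The ball $B_R(x)$ lies inside this half-space, so the difference $h(s)-K*\mathbf 1_{B_R}(x)$ is the $K$-mass of the thin lens between the tangent hyperplane and the sphere, intersected with $B_\ell$. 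The plan is to bound this lens mass by $C_d\frac{\ell}{R}h(s)$.

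To do this, we write $y=(y_1,y')$ with $|y'|<\ell$. A point $y$ with $y_1>s$ lies in $B_R(x)$ iff $(\rho-y_1)^2+|y'|^2<R^2$. For $|y'|\le \ell\le R/2$ this amounts to $y_1>\rho-\sqrt{R^2-|y'|^2}$, and $\rho-\sqrt{R^2-|y'|^2}\le s+\frac{|y'|^2}{R}\le s+\frac{\ell^2}{R}$. Hence
$$K*\mathbf 1_{B_R}(x)\ge \int_{\{y_1>s+\ell^2/R\}}K(y)\,dy = h\Big(s+\frac{\ell^2}{R}\Big).$$
It therefore suffices to prove a relative shift estimate of the form $h(s+\epsilon)\ge (1-C_d\epsilon/\ell)\,h(s)$ for $\epsilon=\ell^2/R\le \ell/2$ and $0\le s<\ell$.

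This relative estimate is the main obstacle, because $h(s)\to0$ as $s\to\ell$ and an additive error is useless near the edge of the support. A pure translation bound cannot work for a general radial $K$ (for instance a thin shell), so I would compare using scaling rather than translation. The natural tool is the radial structure: writing $K(y)=k(|y|)$,
$$h(s)=\int_s^\ell k(r)\,r^{d-1}\,\sigma(s/r)\,dr ,$$
where $\sigma(\tau)$ is the surface measure of the spherical cap $\{\omega\in\mathbb S^{d-1}:\omega_1>\tau\}$, a decreasing function vanishing at $\tau=1$. Rather than shifting, I would shrink the half-space condition in a dilation-compatible way. The inclusion $\{y_1>s\}\cap\{|y'|\le \ell\}\cap\{|x-y|\ge R\}$ lies in a region where $y_1-s\le |y'|^2/R\le \frac{|y|}{R}\,|y'|$, so the lens in direction $\omega$ corresponds to $\omega_1\in(s/r,\, s/r+|\omega'|\,r/R)$. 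For the cap measure I would use an estimate like $\sigma(\tau)-\sigma(\tau+\eta)\le C_d\frac{\eta}{\sqrt{1-\tau^2}}\sigma(\tau)$, obtained from the explicit formula $\sigma(\tau)=c\int_\tau^1(1-t^2)^{(d-3)/2}dt$, and combine it with $|\omega'|\approx\sqrt{1-\tau^2}$. This should give a lens loss of at most $C_d\frac{r}{R}\,\sigma(s/r)\le C_d\frac{\ell}{R}\sigma(s/r)$ for each radius $r$. Integrating against $k(r)r^{d-1}dr$ then yields \eqref{eq:large-ball-relative}. The delicate point is the cap-ratio inequality near $\tau\to1$, where the cap is tiny; there I expect to use the fact that its measure scales like $(1-\tau)^{(d-1)/2}$ while the lens width scales like $(1-\tau)$, so the ratio stays bounded. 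One must also handle the small $d=1$ case separately (there the lens is empty, since $B_R$ is an interval and the inequality is trivial).

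Finally, \eqref{eq:large-ball-delta} follows by choosing $R_\delta=\max(2\ell, C_d\ell/\delta)$. For $|x|\ge R$ this gives $(1-\delta)h(|x|-R)\le (1-C_d\ell/R)\,h(|x|-R)\le K*\mathbf 1_{B_R}(x)$.
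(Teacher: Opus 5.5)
Your overall architecture is the paper's: polar coordinates, comparing on each sphere $|y|=r$ the cap $\{\omega_1>s/r\}$ with the part of it inside $B_R(x)$, then integrating against $k(r)r^{d-1}\,dr$. Your first reduction to $h(s+\ell^2/R)$ is not just hard but false. For a thin shell at radius $r_0$ one has $h(s+\epsilon)=0<h(s)$ for $s\in(r_0-\epsilon,r_0)$, so that paragraph should be dropped.

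\textbf{The gap.} The per-sphere estimate is the whole content of the lemma, and as written it fails near the rim of the cap, $\tau=s/r\to1$. There are two separate problems.

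First, the step $|y'|^2/R\le |y|\,|y'|/R$ discards a factor $|\omega'|\le\sqrt{1-\tau^2}$. The region $\{\tau<\omega_1\le\tau+|\omega'|\,r/R\}$ you obtain has width up to $\sqrt{1-\tau^2}\,r/R\approx\sqrt{2(1-\tau)}\,r/R$. This is much larger than the cap height $1-\tau$ when $1-\tau\ll (r/R)^2$. Concretely, for $1-\tau\le 2(r/R)^2$ it contains every $\omega$ with $\tau<\omega_1\le\tau+\frac{1-\tau}{2}$, which is a fixed fraction of the cap (half of it when $d=3$). So no cap estimate can turn this inclusion into a relative loss $O(r/R)$. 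This regime matters, for instance for a thin-shell kernel with $s$ just below the shell radius.

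Second, your cap inequality $\sigma(\tau)-\sigma(\tau+\eta)\le C_d\frac{\eta}{\sqrt{1-\tau^2}}\sigma(\tau)$ is false. For $d=3$, $\sigma(\tau)=2\pi(1-\tau)$ and the left side equals $2\pi\eta$ for $\eta\le1-\tau$. The ratio is then $\eta/(1-\tau)$, which is not $O(\eta/\sqrt{1-\tau^2})$ as $\tau\to1$; the correct scale is $\eta/(1-\tau)$. The two errors cancel in your bookkeeping. Your closing heuristic (lens width of order $1-\tau$) is actually correct, but it contradicts the width of order $\sqrt{1-\tau}$ you derived.

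\textbf{The repair.} Keep the square: $y_1-s\le |y'|^2/R$ gives $\omega_1-\tau\le r(1-\tau^2)/R$ on the lens, i.e.\ relative width at most $2r/R$.

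The paper does it more cleanly. Expanding $|(R+s)e_1-r\omega|^2<R^2$ shows that, on the sphere $|y|=r$, the ball $B_R(x)$ is exactly $\{\omega_1>\tau+\eta\}$ with $\eta=\frac{r^2-s^2}{2(R+s)r}$. This width does not depend on $\omega$, and its relative size is $\theta:=\eta/(1-\tau)=\frac{r+s}{2(R+s)}\le\ell/R\le\frac12$.

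Then use the slicing formula $\sigma\{\omega_1>q\}=|\mathbb S^{d-2}|\int_q^1(1-u^2)^{(d-3)/2}\,du$, with $(1-u^2)^{(d-3)/2}$ comparable to $(1-u)^{(d-3)/2}$. It gives a per-sphere loss ratio at most $C_d\bigl[1-(1-\theta)^{(d-1)/2}\bigr]\le C_d\ell/R$. Integrating in $r$ yields \eqref{eq:large-ball-relative}.

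Your treatment of $s\ge\ell$ and of $d=1$, and the choice $R_\delta=\max(2\ell,C_d\ell/\delta)$, are fine.
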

Postponing the proof of this lemma to the end of this section we first complete the proof of \eqref{eq:ballinvasion}. 

Given $c$ and $\delta= \delta (c)$ as above, the lemma, applied  with radius $R+ct$, shows that there exists $R= R(c)>0$ such that for any $R\geq R(c)$ and for all $t\geq 0$, we have
\begin{equation}
    \partial_t v(t,x)
\le g(v(t,x))
       +(g(1)-g(v(t,x)))
          K*\mathbf 1_{B_{R+ct}}(x) \label{eq:radial-subsolution}
\end{equation}
Since $\{v(t,\cdot)=1\}=B_{R+ct}$, the right-hand side of
\eqref{eq:radial-subsolution} is precisely
\[
    g(v)+(g(1)-g(v))K*\mathbf 1_{\{v=1\}}
\]
on $\{v<1\}$. We also know that  $\partial_t v=0$ on $\{v=1\}$.  Therefore, $v$ is a
subsolution of \eqref{eq:main0}\hb{, the required regularity being immediate since $\phi_*$ is Lipschitz continuous}.

We now compare this subsolution with $u$.  By Lemma~\ref{lem:invasion}, there exists
$\bar t>0$ (depending on $c$) such that
\[
    u(\bar t,x)=1
    \qquad\text{for every }x\in B_{R+\ell},
\]
and in view of $\operatorname{supp}v(0,\cdot)\subset B_{R+\ell}$, this implies that
\[
    v(0,x)=\phi_*(|x|-R)\le u(\bar t,x)
    \qquad\text{for every }x\in\mathbb R^d.
\]
The comparison principle then yields
\begin{equation}\label{eq:shifted-comparison}
    \phi_*(|x|-ct-R)
       =v(t,x)
       \le u(\bar t+t,x)
    \qquad\text{for every }x\in\mathbb R^d,\quad t\ge0.
\end{equation}
Since $\phi_*=1$ on $(-\infty,0]$ and $u\le 1$, we infer that
$$u(\bar t+t,x)=1 \quad \mbox{ for all } x \mbox{ with } |x|<ct+R.$$
In other words, we proved that
\[
    B_{R+ct}\subset\{u(\bar t+t,\cdot)=1\}
    \qquad\text{for every }t\ge0.
\]
\hb{or, equivalently, $B_{R+c(t-\bar t)}\subset\{u(t,\cdot)=1\}$ for every $t\geq \bar t$.
To conclude, we fix $c<c^*$ and apply the construction above with some $c'\in (c,c^*)$, which provides $R=R(c')$ and $\bar t=\bar t(c')$ as above. Since
$$
R+c'(t-\bar t) \geq ct \qquad \mbox{ for all } t\geq \frac{c'\bar t}{c'-c},
$$
we obtain \eqref{eq:ballinvasion} with $T_c:=\max\left\{ \bar t, \frac{c'\bar t}{c'-c}\right\}$.}
The proof of Theorem~\ref{thm:asymptotic-spreading} is thereby complete, except for the proof of the lemma to which we now turn.
\medskip
 

\medskip

\subsection{Proof of Lemma~\ref{lem:large-ball-halfspace}}
Since it has a simple geometric interpretation that is easily seen in the 
particular case where $K=\frac{1}{|B_\ell| } \mathbb{1}_{B_\ell}$, we start by providing a sketch of the proof in that case and then derive the general case.

\begin{proof}[Proof of Lemma \ref{lem:large-ball-halfspace} when $K=\frac{1}{|B_\ell| } \mathbb{1}_{B_\ell}$]
The radial symmetry of $K$ implies that it is enough to prove the inequality when $x=(x_1,0)\in [R,+\infty)\times\R^{d-1}$.
We then have $|x|-R=x_1-R$ and we can write
$$
h(x_1-R) = \int_{y_1< 0} K(x_1-R-y_1,y')\, dy = \frac{|H\cap B_\ell(x_1-R,0)|}{|B_\ell|} $$
where $H  $ is the half space $H =\{ (y_1,y')\in \R^d\, ;\, y_1<0\}$.

On the other hand, we have
$$K\ast {\mathbb 1}_{B_R}(x) = \int_{|y|<R} K(x-y) \, dy  = 
\int_{D_R} K(x_1-R-y_1,y')\, dy   = \frac{|D_R\cap B_\ell(x_1-R,0)|}{|B_\ell|} 
$$
where $D_R$ is the disc centered at $(-R,0)$ with radius $R$:
$$ D_R= \{(y_1,y')\, ;\, (y_1+R)^2+{y'}^2\leq R^2\}.$$
Inequality \eqref{eq:large-ball-relative} is thus equivalent to
the geometric inequality
\begin{equation}\label{eq:geo}
 |D_R\cap B_\ell(s,0)| 
    \ge
    \left(1-C_d\frac{\ell}{R}\right)
   |H\cap B_\ell(s,0)| 
    \qquad\text{for all } s\geq 0.  
\end{equation}
We note that $D_R \subset H $, and $D_R\cap B_{\ell}(s,0)\to H\cap B_{\ell}(s,0)$ when $R\to\infty$  (this is illustrated in Figure~\ref{fig:limR}). Inequality \eqref{eq:geo} simply quantifies that fact.
\begin{figure}
\begin{center}
\scalebox{.8}{
\begin{tikzpicture}
\fill [pattern=north west lines,pattern color=red] (0,1.6) arc (126.869897646:233.130102354:2);
\fill [white] (-0.20645,1.42193) arc (134.686417733:225.313582267:2) (-0.20645,1.42193) arc (16.5221481395:-16.5221481395:5);
\fill [pattern=north east lines,pattern color=blue] (-0.20645,1.42193) arc (134.686417733:225.313582267:2) (-0.20645,1.42193) arc (16.5221481395:-16.5221481395:5);
\draw [thick,->] (-4,0) -- (4,0);
\draw [thick,->] (0,-4) -- (0,4);
\draw [thick] (1.2,0) circle (2) node [cross] {} node [anchor=north] {\large$s$} (-1.46446609407,3.53553390593) arc (45:-45:5) (0.775,0.905195) node [anchor=south west] {\Large$\ell$};
\draw [thick,->] (-4.06667,0.88192) -- (-1.25,3.30719);
\draw (-2.66667,2.20479) node [anchor=north west] {\large$R$};
\draw [thick,->] (1.2,0) -- (0.35,1.81039);
\end{tikzpicture}
}
\end{center}
\caption{The spherical cap $C=H\cap B_\ell(s)$ (blue and red regions) corresponds to the planar traveling front.
The blue region $E=D_R\cap B_\ell(s)$ corresponds to the spherical symmetric solution.
The  region in red  is $C\setminus E$.
Inequality \eqref{eq:geo1} states that
 the ratio $\frac{|C\setminus E|}{|C|}$ is $\mathcal O(\ell/R)$.}
 \label{fig:limR}
\end{figure}

To prove \eqref{eq:geo}, we first remark that both sides vanish when $s\geq \ell$, while when $s<\ell$ it is equivalent to
\begin{equation}\label{eq:geo1}
\frac{|(H\setminus D_R)\cap B_{\ell}(s,0)|}{|H\cap B_{\ell}(s,0)|} 
\leq C_d \frac{ \ell}{R} .
\end{equation}
Finally, after rescaling, it is enough to prove this inequality when $\ell=1$, that is:
\begin{equation}\label{eq:geo2}
\frac{|(H\setminus D_R)\cap B_{1}(s,0)|}{|H\cap B_{1}(s,0)|} 
\leq \frac{C_d }{R} \qquad 0\leq s\leq 1.
\end{equation}
This inequality now follows from simple geometric considerations  that we illustrate in Figure \ref{fig:geo}.
\begin{figure}
\begin{center}
\scalebox{.8}{
\begin{tikzpicture}
\fill [white] (-0.20645,1.42193) arc (134.686417733:225.313582267:2) (-0.20645,1.42193) arc (16.5221481395:-16.5221481395:5);
\filldraw [pattern=north east lines, draw=black, pattern color=blue] (-.8,0) -- (0,0) -- (0,1.6); 
\filldraw [pattern=north east lines, draw=black, pattern color=blue] (-.8,0) -- (0,0) -- (0,-1.6); 
\draw [thick] (-.8,0) -- (0,1.6);
\draw [thick] (-.8,0) -- (0,-1.6);
\draw [thick,->] (-3,0) -- (4,0);
\draw [thick,->] (0,-4) -- (0,4);
\draw [thick] (1.2,0) circle (2) node [cross] {} node [anchor=north] {\large$s$};
\draw [thick,->] (1.2,0) -- (0,1.6);
\draw (.6,1.2) node [anchor=north west] {\large$1$};
\draw (0,.8) node [anchor=north west] {\large$a$};
\draw (-.6,-.1) node [anchor=north west] {\large$\delta$};
\end{tikzpicture}
}
\scalebox{.8}{
\begin{tikzpicture}
\fill [pattern=north west lines,pattern color=red] (0,1.6) -- (0,0) -- (-.235,1.52);
\fill [pattern=north west lines,pattern color=red] (0,-1.6) -- (0,0) -- (-.235,-1.52);
\draw [thick,->] (-6,0) -- (4,0);
\draw [thick,->] (0,-4) -- (0,4);
\draw [thick] (1.2,0) circle (2) node [cross] {} node [anchor=north] {\large$s$} (-1.46446609407,3.53553390593) arc (45:-45:5); 
\draw [thick] (-5,0) -- (0,1.6);
\draw [thick] (-5,0) -- (0,-1.6);
\draw [thick] (0,0) -- (-.235,1.52);
\draw [thick] (0,0) -- (-.235,-1.52);
\draw (-5,0) node [anchor=north] {\large$-R$};
\draw (0,1) node [anchor=north west] {\large$a$};
\draw (-.4,2.1) node [anchor=north west] {\large$b$};
\end{tikzpicture}
}
\end{center}
\caption{Sketch of the proof of \eqref{eq:geo2}. On the left, the blue conical region is smaller than the spherical cap $H\cap B_1(s,0)$ and has volume (or area if $d=2$) $c_d a^{d-1} \delta$. On the right, the red region contains $(H\setminus D_R)\cap B_1(s,0)$ and has volume $\leq C_d a^{d-1}b$. Using the fact that $b=\sqrt{R^2+a^2}-R\leq \frac{a^2}{2R}$ and $a^2  = 1-(1-\delta)^2 \leq 2\delta$, gives \eqref{eq:geo2}.}
\label{fig:geo}
\end{figure}
\end{proof}

\medskip

\begin{proof}[Proof of Lemma \ref{lem:large-ball-halfspace} for general $K$]
By radial symmetry, it is enough to consider
\[
    x=(R+s)e_1,\qquad s\ge0.
\]
We can then write:
\begin{align*}
    h(s)
      &=\int_{H_s}K(z)\,dz,
      &H_s&:=\{z\in\mathbb R^d\, ;\, z_1>s\},\\
    K*\mathbf 1_{B_R}((R+s)e_1)
      &=\int_{D_{R,s}}K(z)\,dz,
      &D_{R,s}&:=\{z\in\mathbb R^d\,;\,              |(R+s)e_1-z|<R\}
\end{align*}
where the condition $|(R+s)e_1-z|<R$ is equivalent to
\begin{equation}\label{eq:ballcondition}
    z_1>s+\frac{|z|^2-s^2}{2(R+s)}.
\end{equation}
Note that $D_{R,s}\subset H_s$ and both integrals vanish when $s\ge\ell$.  

With these notations, \eqref{eq:large-ball-relative} can be written as
$$
\int_{D_{R,s}}K(z)\,dz \geq \left(1-C_d\frac{\ell}{R} \right)\int_{H_s}K(z)\,dz,\qquad \mbox{ for all } 0\leq s\leq \ell,
$$
which  is equivalent to
\begin{equation}\label{eq:lost-mass}
    \int_{H_s\setminus D_{R,s}}K(z)\,dz
    \le C_d\frac{\ell}{R}\int_{H_s}K(z)\,dz \qquad \mbox{ for all } 0\le s<\ell. 
\end{equation}

The inequality is straightforward in dimension $1$ so we can take $d\geq 2$.
Write $K(z)=k(|z|)$ \hb{(as in \eqref{eq:K2})} and use polar coordinates
$z=r\omega$, with $r>0$ and $\omega\in\mathbb S^{d-1}$.  For fixed
$r>s$, set
\[
    q:=\frac{s}{r}\in[0,1).
\]
The intersection of the sphere of radius $r$ with the half-space $H_s$ corresponds to 
$$ S_r \cap H_s = \{ (r,\omega) \, ;\, \omega_1>q\}.$$
Similarly, using \eqref{eq:ballcondition}, we see that the intersection of  the sphere of radius $r$ with the ball $D_{R,s}$
is 
$$S_r \cap D_{R,s} = \{ (r,\omega)\, ;\, \omega_1>q+\eta\} \quad \mbox{ with } \eta:=\frac{r^2-s^2}{2(R+s)r}.$$
Thus the part lost on this sphere is the strip
\[
S_r \cap (H_s\setminus D_{R,s}) =    \{ (r,\omega)\, ;\,    q<\omega_1\le q+\eta\}.
\]
Moreover, since $s\leq r\leq \ell$, we have
\begin{equation}\label{eq:theta-definition}
    \theta:=\frac{\eta}{1-q}
       =\frac{r+s}{2(R+s)}
       \le \frac{\ell}{R}.
\end{equation}
For $R\ge2\ell$, we deduce $0\le\theta\le1/2$, and hence
$q+\eta<1$.

Let $\sigma$ denote surface measure on $\mathbb S^{d-1}$.  
The standard
slicing formula on the sphere gives, with
$\alpha=(d-3)/2>-1$,
\begin{align*}
\mathcal H^{\hb{d}-1} ( S_r \cap H_s)  
= \sigma\{\omega_1>q\}
   &=|\mathbb S^{d-2}|
       \int_q^1(1-u^2)^\alpha\,du,\\
\mathcal H^{\hb{d}-1}  (S_r\cap (H_s\setminus D_{R,s})) 
=  \sigma\{q<\omega_1\le q+\eta\}
   &=|\mathbb S^{d-2}|
       \int_q^{q+\eta}(1-u^2)^\alpha\,du .
\end{align*}
For $u\in[0,1]$, the quantities
$(1-u^2) $ and $(1-u)$ are comparable. Therefore
\begin{align*}
\frac{\mathcal H^{\hb{d}-1}  (S_r \cap (H_s\setminus D_{R,s}))}{\mathcal H^{\hb{d}-1} ( S_r \cap H_s)  }
 &\le C_d
   \frac{\displaystyle\int_q^{q+\eta}(1-u)^\alpha\,du}
        {\displaystyle\int_q^1(1-u)^\alpha\,du}\\
 &=C_d\left[1-(1-\theta)^{\alpha+1}\right]
 \le C_d\theta
 \le C_d\frac{\ell}{R}.
\end{align*}
\hb{where} we used the fact that $0\le\theta\le1/2$ and
$\alpha+1=(d-1)/2>0$. 

Finally, we write
\begin{align*}
\int_{H_s\setminus D_{R,s}} K(z)\, dz & = \int_s^\ell k(r) r^{d-1}  \mathcal H^{\hb{d}-1} ( S_r \cap (H_s\setminus D_{R,s})) \, dr \\
& \leq 
C_d \frac{\ell}{R}
\int_s^\ell k(r) r^{d-1}  \mathcal H^{\hb{d}-1} ( S_r \cap H_s) \, dr \\
&  \leq C_d \frac{\ell}{R} \int_{H_s } K(z)\, dz
\end{align*}
which is \eqref{eq:lost-mass}, whence \eqref{eq:large-ball-relative} follows.

Then, taking for example
$    R_\delta:=\max\left\{2\ell,\frac{C_d\ell}{\delta}\right\}$
yields \eqref{eq:large-ball-delta}.
\end{proof}

\section{Conclusion and perspectives}\label{sec:Conclusions}

We introduced a general class of models~\eqref{eq:main1} aimed at describing how a population can expand its range without individual movement, through the effect of congestion. This framework provides a description of spatial growth suited to populations of immotile cells and captures several key features observed experimentally. We established well-posedness of the associated initial value problem.
We then investigated in detail a singular limit of this model, given by equation~\hb{\eqref{eq:main0}}, in which population relocation results from the expansion of a saturated region. For this limiting model, we established several fundamental mathematical properties, including a comparison principle, a characterization of traveling wave solutions, and a precise determination of the asymptotic spreading speed.

\medskip
These results demonstrate that \eqref{eq:main0} is a sound mathematical model that yields dynamics consistent with experimental observations. 
It retains many of the key features typically associated with reaction-diffusion equations. We anticipate that additional properties not addressed here can also be extended to this framework. These include the stability of traveling wave profiles or the refined characterizations of front propagation, such as results that further specify the precise front location (e.g. Bramson-type shifts). 
Furthermore, the model exhibits some new properties which are natural for the phenomena under consideration, such as monotonicity in time or segmentation in three regions.

While we have chosen to focus here on the singular model \eqref{eq:main0}, 
the same type of questions arise for the general model \eqref{eq:main1}-\eqref{eq:L}.
Proving the existence of traveling waves for that model is actually more delicate. When working with the singular model \eqref{eq:main0}, a crucial simplification occurs as the convolution $K*{\mathbb 1}_{u=1}$ depends on the position of the front but not on the   function $u$.
In addition, it is not clear that this general model satisfies a comparison principle, so the existence of traveling wave solutions does not immediately imply the asymptotic speed of propagation property.

The basic mechanisms modeled by \eqref{eq:main1} or \eqref{eq:main0} are fairly general and apply to other settings as well where a population extends its range under the effect of congestion owing to a lack of available space rather than resources. This congestion pushes some individuals to  occupy  nearby, less constricted locations.
Potential applications include for instance ecological invasions 
or the  early dispersal of modern humans across southern Asia and Europe (where competition for space is linked to the availability of cultivable land and habitable living space).

These examples lead us to a slightly more general form of the equation:
\begin{equation}\label{eq:0}
\pa_t u  =  \Big[g(u) + \rho(u)K*{\mathbb 1}_{\{u=1\}} \Big] {\mathbb 1}_{\{u<1\}} .
\end{equation}
The function $g(u)$ represents the growth rate in the absence of competition for space (e.g. $g(u)=ru$ if resources are abundant).
The term $\rho(u) K*{\mathbb 1}_{\{u=1\}}$ describes growth at position $x$ due to the individuals who relocated due to a nearby saturate region.

It is  not difficult to extend our analysis to \eqref{eq:0}.
The existence and  uniqueness of a Lipschitz, monotone-in-time solution can be proved by proceeding as above as long as $g$ and $\rho $ are non-negative Lipschitz functions.
Spreading properties  require the additional conditions
$$ g(0)=0, \quad g(1)>0, \quad \rho (0)>0.$$
In particular, the condition $g(0)=0$ ensures the finite speed of propagation of the support (and thus the existence of semicompactly supported traveling wave solutions).
The condition $g(1)> 0$  is needed to reach the saturation level $u=1$ in finite time, which together with the condition $\rho (0)> 0$ is required to start the spatial expansion of the population. These last two conditions give a lower bound on the minimal speed of the traveling wave $c^*$.

\medskip

\paragraph{\bf Spatial growth of cities}\label{sprawl}

Equation \eqref{eq:0} provides a natural framework for the modeling of the long-term spatial expansion of cities and land-use transitions.
Indeed, the lack of space availability (congestion) plays a key role in 
the development of new urban area in previously undeveloped land and nonlocal (but finite range) jumps are natural as developers and city planners look to expand the city's footprint.

We refer to \cite{MB25} and the many references therein for a detailed discussion of various mathematical models of urban growth. 
A simple model \cite{BattySun1999} for land-transitions classifies   land into three main categories: (i) Vacant Land (rural or undeveloped land available for future constructions), (ii) New Development (area in transition), and (iii) Established Development (mature city centers where new constructions are no longer possible).
As shown in Proposition \ref{prop:support}, our model naturally describes
these three regions (respectively $\{u=0\}$, $\{0<u<1\}$ and $\{u=1\}$). Indeed, the saturated region where $u=1$  corresponds to the mature areas of the city.

We point out that in this framework, one should think of $t$ as a parameter measuring the size of the population (rather than the actual time) and $u(t,x)$ as the density of housing units at some location $x$ (or the normalized number of people who can reside at location $x$).
As the population grows (that is as $t$ increases), Equation \eqref{eq:0}  describes the addition of new constructions. 
When $0<u(t,x)<1$, there is a natural rate of construction $g(u)$ due to increased desirability: people moving to the neighborhood lead to improvement in the local infrastructure, school, commerce etc. which in turn incentivizes further local construction.

The saturated core where $u=1$ is viewed here as the inner city - where all services are readily available. A model where that threshold is different from the constraint might also be considered. Such neighborhood are the most attractive, and since further local construction is limited, it leads to new developments in surrounding (including possibly currently empty) areas.

The model can then be further specified to take into account various features associated with the growth of cities.
In particular, the total number of housing units that can be built at each location is limited by several considerations (e.g. land availability, zoning laws, environmental constraints etc.). The model we considered in this paper involved the constraint $u(t,x)\leq 1$ for simplicity.  We can extend the model by taking $u(t,x)\leq H(x)$ where $H(x)$ is a given non-negative function. Regions where $H(x)=0$ correspond to non constructible land.

We leave it to further studies to explore more in depth the relevance of models of type~\eqref{eq:0}
to represent urban growth dynamics.

\medskip

\bibliographystyle{plain}
\bibliography{saturation}

\end{document}